\declaretheorem{theorem}
\declaretheorem{corollary}
\declaretheorem{lemma}
\declaretheoremstyle[qed=$\square$]{definitionwithend}
\declaretheorem[style=definitionwithend]{definition}
\declaretheorem[style=definitionwithend]{assumption}
\declaretheorem[style=definitionwithend]{remark}
\crefname{assumption}{Assumption}{Assumptions}
\crefname{conjecture}{Conjecture}{Conjectures} \usepackage{enumitem}
\definecolor{gold}{rgb}{0.85,0.65,0}
\newcommand{\abs}[1]{\ensuremath{\left\lvert #1 \right\rvert}}
\newcommand{\by}{\times}
\newcommand{\norm}[1]{\ensuremath{\left\lVert #1 \right\rVert}}
\newcommand{\ip}[1]{\ensuremath{\left\langle #1 \right\rangle}}
\newcommand{\grad}{\ensuremath{\nabla}}
\newcommand{\set}[1]{\left\{#1\right\}}
\def\N{{\mathbb{N}}}
\def\R{{\mathbb{R}}}
\def\S{{\mathbb{S}}}
\def\X{{\mathbb{X}}}
\def\bS{{\mathbf{S}}}
\def\cB{{\cal B}}
\def\cC{{\cal C}}
\def\cD{{\cal D}}
\def\cE{{\cal E}}
\def\cF{{\cal F}}
\def\cG{{\cal G}}
\def\cM{{\cal M}}
\def\cN{{\cal N}}
\def\cS{{\cal S}}
\def\cU{{\cal U}}
\DeclareMathOperator{\Opt}{Opt}
\DeclareMathOperator*{\argmin}{arg\,min}
\DeclareMathOperator*{\argmax}{arg\,max}
\DeclareMathOperator{\rank}{rank}
\DeclareMathOperator{\Diag}{Diag}
\DeclareMathOperator{\diag}{diag}
\DeclareMathOperator{\tr}{tr}
\DeclareMathOperator{\range}{range}
\newenvironment{smallpmatrix}
    {\left(
    \begin{smallmatrix}} 
    {\end{smallmatrix}
    \right)
    }
\DeclareMathOperator{\inter}{int}
\DeclareMathOperator{\bd}{bd}
\LetLtxMacro\orgvdots\vdots
\LetLtxMacro\orgddots\ddots
\DeclareRobustCommand\vdots{\mathpalette\@vdots{}}
\newcommand*{\@vdots}[2]{\sbox0{$#1\cdotp\cdotp\cdotp\m@th$}\sbox2{$#1.\m@th$}\vbox{\dimen@=\wd0 \advance\dimen@ -3\ht2 \kern.5\dimen@
\dimen@=\wd2 \advance\dimen@ -\ht2 \dimen2=\wd0 \advance\dimen2 -\dimen@
    \vbox to \dimen2{\offinterlineskip
      \copy2 \vfill\copy2 \vfill\copy2 }}}
\DeclareRobustCommand\ddots{\mathinner{\mathpalette\@ddots{}\mkern\thinmuskip
  }}
\newcommand*{\@ddots}[2]{\sbox0{$#1\cdotp\cdotp\cdotp\m@th$}\sbox2{$#1.\m@th$}\vbox{\dimen@=\wd0 \advance\dimen@ -3\ht2 \kern.5\dimen@
\dimen@=\wd2 \advance\dimen@ -\ht2 \dimen2=\wd0 \advance\dimen2 -\dimen@
    \vbox to \dimen2{\offinterlineskip
      \hbox{$#1\mathpunct{.}\m@th$}\vfill
      \hbox{$#1\mathpunct{\kern\wd2}\mathpunct{.}\m@th$}\vfill
      \hbox{$#1\mathpunct{\kern\wd2}\mathpunct{\kern\wd2}\mathpunct{.}\m@th$}}}}
\DeclareRobustCommand\bddots{\mathinner{\mathpalette\@bddots{}\mkern\thinmuskip
  }}
\newcommand*{\@bddots}[2]{\sbox0{$#1\cdotp\cdotp\cdotp\m@th$}\sbox2{$#1.\m@th$}\vbox{\dimen@=\wd0 \advance\dimen@ -3\ht2 \kern.5\dimen@
\dimen@=\wd2 \advance\dimen@ -\ht2 \dimen2=\wd0 \advance\dimen2 -\dimen@
    \vbox to \dimen2{\offinterlineskip
      \hbox{$#1\mathpunct{\kern\wd2}\mathpunct{\kern\wd2}\mathpunct{.}\m@th$}\vfill
      \hbox{$#1\mathpunct{\kern\wd2}\mathpunct{.}\m@th$}\vfill
      \hbox{$#1\mathpunct{.}\m@th$}}}}
\makeatother \usepackage{soul}
\newcommand{\ifjelse}[2]{#2}
\newcommand{\obj}{\textup{obj}}
\newcommand{\figurescale}{1.0}
\begin{document}
\title{Accelerated first-order methods for a class of semidefinite programs}
\author{
	Alex L.\ Wang
	\thanks{Carnegie Mellon University, Pittsburgh, PA, USA. Current address: Purdue University, West Lafayette, IN, USA, \url{wang5984@purdue.edu}}
	\and
	Fatma K{\i}l{\i}n\c{c}-Karzan
	\thanks{Carnegie Mellon University, Pittsburgh, PA, USA, \url{fkilinc@andrew.cmu.edu}}}
\date{\today}
\maketitle

\begin{abstract}

This paper introduces a new storage-optimal first-order method (FOM), CertSDP, for solving a special class of semidefinite programs (SDPs) to high accuracy.
The class of SDPs that we consider, the \emph{exact QMP-like SDPs}, is characterized by low-rank solutions, \textit{a priori} knowledge of the restriction of the SDP solution to a small subspace, and standard regularity assumptions such as strict complementarity.
Crucially, we show how to use a
\emph{certificate of strict complementarity}
to construct a low-dimensional strongly convex minimax problem whose optimizer coincides with a factorization of the SDP optimizer.
From an algorithmic standpoint, we show how to construct the necessary certificate and how to solve the minimax problem efficiently.
Our algorithms for strongly convex minimax problems with inexact prox maps may be of independent interest.
We accompany our theoretical results with preliminary numerical experiments suggesting that CertSDP significantly outperforms current state-of-the-art methods on large sparse exact QMP-like SDPs.

 \end{abstract}

\section{Introduction}

Semidefinite programs (SDPs) are among the most powerful tools that optimizers have for tackling both convex \emph{and} nonconvex problems.
In the former direction, SDPs are routinely used to model convex optimization problems that arise in a variety of applications such as robust optimization, engineering, and robotics~\cite{vandenberghe1996semidefinite,benTal2001lectures}. In the latter direction,
many results over the last thirty years have shown that SDPs perform provably well as convex relaxations of certain nonconvex optimization problems; see~\cite{goemans1995improved,candes2015phase,raghavendra2008optimal,benTal2001lectures} and references therein. As examples, exciting results in phase retrieval~\cite{candes2015phase} and clustering~\cite{mixon2016clustering,abbe2015exact,rujeerapaiboon2019size} 
show that these nonconvex problems have exact SDP relaxations with high probability under certain random models.
More abstractly, a line of recent work~\cite{wang2021tightness,burer2019exact,locatelli2020kkt,wang2020geometric,beck2007quadratic,beck2012new,jeyakumar2013trust,burer2014trust,sturm2003cones,argue2020necessary,kkz2021exactness,burer2017how}
has investigated general conditions under which \emph{exactness} holds between nonconvex quadratically constrained quadratic programs (QCQPs) or quadratic matrix programs (QMPs) and their standard SDP relaxations.

Despite the expressiveness and strong theoretical guarantees of SDPs, they have seen limited application
in practice and have a reputation of being prohibitively expensive, especially for large-scale applications.
Indeed, standard methods for solving SDPs, such as the interior point methods~\cite{nesterov1994interior,alizadeh1995interior}, scale poorly with problem dimension due to both their expensive iterations and also significant memory needs. See \cite[Section 8.1]{yurtsever2021scalable} for a more thorough discussion.

In this paper, we show how to derive highly efficient (in iteration complexity, per-iteration-cost, and memory usage) first-order methods (FOMs) for solving general SDPs that admit a desirable \emph{exactness} property. 
Our developments are inspired by recent results on linearly convergent FOMs for the trust-region subproblem (TRS) and the generalized trust-region subproblem (GTRS)~\cite{wang2021implicit,carmon2018analysis} that operate in the original problem space. We briefly discuss these problems now to motivate our assumptions and our problem class. We will discuss this literature in further detail in \cref{subsec:related_work}.

The TRS~\cite{more1983computing} seeks to minimize a general quadratic objective over the unit ball.
The GTRS~\cite{more1993generalizations} then replaces the unit ball constraint with a general quadratic equality or inequality constraint:
\begin{align*}
\inf_{x\in\R^{n-1}}\set{q_\obj(x):\, q_1(x) = 0}
\end{align*}
(presented as an equality constraint).
Here, both $q_\obj$ and $q_1$ may be nonconvex, but it is standard to assume that
there exists $\hat\gamma\in\R$ such that $q_\obj + \hat\gamma q_1$ is a \emph{strongly convex} quadratic function.
Under this assumption, the S-lemma~\cite{fradkov1979s-procedure} guarantees that the GTRS has an exact SDP relaxation in the following sense: Let $M_\obj,\,M_1$ be symmetric matrices such that $q_\obj(x) = \begin{smallpmatrix}
	x\\1
\end{smallpmatrix}^\intercal M_\obj \begin{smallpmatrix}
	x\\1
\end{smallpmatrix}$ and $q_1(x) = \begin{smallpmatrix}
	x\\1
\end{smallpmatrix}^\intercal M_1 \begin{smallpmatrix}
	x\\1
\end{smallpmatrix}$. Then, equality holds between the GTRS, its SDP relaxation, and the dual of the SDP relaxation:
\begin{align*}
&\min_{x\in\R^{n-1}}\set{q_\obj(x):\, q_1(x) = 0}\\
&\qquad=\min_{Y\in\S^{n}}\set{\ip{M_\obj,Y}:\, \begin{array}
	{l}
	\ip{M_1,Y} = 0\\
	Y = \begin{pmatrix}
		* & *\\ * & 1
	\end{pmatrix} \succeq 0
\end{array}}\\
&\qquad = \sup_{\gamma\in\R,\, t\in\R}\set{t:\, M_\obj + \gamma M_1 - t \begin{pmatrix}
	0_{n-1} & \\ & 1
\end{pmatrix}\succeq0}.
\end{align*}
Here, $\S^n$ is the vector space of $n\by n$ symmetric matrices, the inner product $\ip{M,Y}$ is defined as $\ip{M,Y}\coloneqq \tr(M^\intercal Y)$ and $Y\succeq 0$ indicates that $Y$ is positive semidefinite (PSD).

In particular, the SDP relaxation of the GTRS has an optimal solution $Y^*$ with rank one.
Furthermore, we know the value of $(Y^*)_{n,n} = 1$ before we even solve the SDP relaxation. We will think of this as \textit{a priori} knowledge of the restriction of $Y^*$ to a subspace of dimension $\rank(Y^*) = 1$.

Despite the fact that the SDP relaxation solves the GTRS exactly, the large computational cost of solving SDPs has spurred an extensive line of work
developing new algorithms for the GTRS (that avoid explicitly solving large SDPs).
Most relatedly, \citet{wang2021implicit} assume that the dual SDP is solvable and that there exists an optimal dual solution $(\gamma^*,t^*)$ such that $M_\obj + \gamma^* M_1 - t^* \begin{smallpmatrix}
	0_{n-1} &\\ & 1
\end{smallpmatrix}$ has rank $n-1$.
This assumption holds generically for GTRS problems conditioned on strict feasibility of the dual SDP.
\citet{wang2021implicit} then showed that it is possible to construct a strongly convex reformulation of the GTRS in the original space using a sufficiently accurate estimate of $\gamma^*$.
We will reinterpret the assumption of~\cite{wang2021implicit} as a \emph{strict complementarity}~\cite{alizadeh1997complementarity} assumption between the dual SDP and the desired rank-one solution $Y^*$.
This property will also play a crucial role in our algorithms.

In our study, we will examine general SDPs satisfying similar structural assumptions and design an efficient \emph{storage-optimal} FOM to solve them. In this respect, our approach also extends a recent line of work~\cite{ding2021optimal,yurtsever2021scalable,shinde2021memory,friedlander2016low} towards developing storage-optimal FOMs for SDPs possessing low-rank solutions.
We discuss storage optimality in SDP algorithms in \cref{subsec:related_work}.

\subsection{Problem setup and assumptions}
Consider an SDP in standard form and its dual:
\begin{align}
\tag{SDP}
\label{eq:primal_dual_SDP}
&\inf_{Y\in\S^n}\set{\ip{M_\obj,Y}:\, \begin{array}
	{l}
	\ip{M_i,Y} +d_i= 0,\,\forall i\in[m]\\
	Y\succeq 0
\end{array}}
\\
&\qquad\geq
\sup_{\gamma\in\R^m}\set{d^\intercal \gamma:\, \begin{array}
	{l}
	M_\obj + \sum_{i=1}^m \gamma_i M_i \succeq 0
\end{array}}.\nonumber
\end{align}
For notational convenience, we let $d_\obj \coloneqq 0$ and define $M(\gamma)\coloneqq M_\obj + \sum_{i=1}^m \gamma_i M_i$ and $d(\gamma) \coloneqq d_\obj + \sum_{i=1}^m \gamma_i d_i$.

In this paper, inspired by the structural properties of the GTRS that make it amenable to highly efficient FOMs, we will work under two major assumptions.
First, we will assume (\cref{as:exact_sdp}) that the primal and dual SDPs are both solvable, strong duality holds, and there exist primal and dual optimal solutions $Y^*\in\S^n$ and $\gamma^*\in\R^m$ such that $\rank(Y^*)  = k$ and $\rank(M(\gamma^*)) = n - k$. The assumption that $\rank(Y^*) + \rank(M(\gamma^*)) = n$ is referred to as \emph{strict complementarity} and is known to hold generically conditioned on primal and dual attainability~\cite{alizadeh1997complementarity}.

The strict complementarity assumption is common in the literature on algorithms for SDPs and convex optimization at large. See, for example, \cite{ding2019optimal,garber2022efficient,drusvyatskiy2018error} for FOMs that work under this assumption or \cite{goldfarb1998interior} for an analysis of interior point methods for SDPs with this assumption.

Second, we will assume (\cref{as:W_a_priori}) that the optimal primal solution $Y^*$ is known \textit{a priori} on some $k$-dimensional subspace $W^\perp$, on which it is positive definite. This assumption is inspired by QCQP and QMP applications~\cite{wang2021tightness,beck2007quadratic,shor1990dual}:
Recall that the standard SDP relaxation~\cite{shor1990dual} of an equality-constrained QCQP (in the variable $x\in\R^{n-1}$) is given by
\begin{align*}
&\inf_{x\in\R^{n-1}}\set{\begin{pmatrix}
	x\\1
\end{pmatrix}^\intercal M_\obj\begin{pmatrix}
	x\\1
\end{pmatrix}:\, \begin{array}
	{l}
	\begin{pmatrix}
	x\\1
\end{pmatrix}^\intercal M_i\begin{pmatrix}
	x\\1
\end{pmatrix} = 0,\,\forall i\in[m]
\end{array}}\\
&\qquad\geq
\inf_{Y\in\S^n}\set{\ip{M_\obj,Y}:\, \begin{array}
	{l}
	\ip{M_i,Y} = 0,\,\forall i\in[m]\\
	Y = \begin{pmatrix}
		* & *\\
		* & 1
	\end{pmatrix}\succeq 0
\end{array}}.
\end{align*}
Thus, the optimal solution (in fact, any feasible solution) to the SDP will have a $1$ in the bottom-right corner.
Taking $W$ to be the subspace corresponding to the first $(n -1)$-coordinates of $\R^n$, we have that the restriction of $Y^*$ to $W^\perp$ is known \emph{a priori} and is positive definite.
Similarly, the standard SDP relaxation~\cite{beck2007quadratic} of an equality-constrained QMP (in the variable $X\in\R^{(n-k)\times k}$) is given by
\begin{align*}
&\inf_{X\in\R^{(n-k)\times k}}\set{\tr\left(\begin{pmatrix}
	X\\
	I_k
\end{pmatrix}^\intercal M_\obj \begin{pmatrix}
	X\\
	I_k
\end{pmatrix}\right):\, \begin{array}
	{l}
	\tr\left(\begin{pmatrix}
	X\\
	I_k
\end{pmatrix}^\intercal M_i \begin{pmatrix}
	X\\
	I_k
\end{pmatrix}\right) = 0,\,\forall i\in[m]
\end{array}}\\
&\qquad\geq \inf_{Y\in\S^n}\set{\ip{M_\obj, Y}:\, \begin{array}
	{l}
	\ip{M_i, Y} = 0 ,\,\forall i\in[m]\\
	Y = \begin{pmatrix}
		* & *\\
		* & I_k
	\end{pmatrix}\succeq 0
\end{array}}.
\end{align*}
Taking $W$ to be the subspace corresponding to the first $(n-k)$ coordinates of $\R^n$, we have that the restriction of $Y^*$ to $W^\perp$ is known \emph{a priori} and positive definite.

We will refer to SDPs where \cref{as:exact_sdp,as:W_a_priori} hold as \emph{rank-$k$ exact QMP-like SDPs} or \emph{$k$-exact SDPs} for short.
While \cref{as:exact_sdp,as:W_a_priori} are both natural to assume individually, this pair of assumptions together is very powerful when assumed to hold \emph{with the same choice of $k$}. For example, in the context of SDP relaxations of QCQPs, it implies that the SDP relaxation has a unique solution of rank 1.

Naturally, the class of $k$-exact SDPs form a very special class of all SDPs.
Let us briefly comment on situations where it is natural to expect $k$-exact SDPs. First, the SDP relaxation of the GTRS is generically $1$-exact conditioning only on dual strict feasibility.
\cref{sec:strict_comp_qmp} shows that the situation is analogous in SDP relaxations of QMPs with matrix variables of dimension $X\in\R^{n\by k}$ and at most $k$ constraints. This result is new and gives an alternate proof of \cite[Theorem 3.5]{beck2007quadratic} (see also~\cite{wang2020geometric}).
More generally, it is difficult to establish whether the SDP relaxation of a given QCQP or QMP satisfies \cref{as:exact_sdp} with the appropriate value of $k$. Nonetheless, there is a long line of research establishing sufficient conditions for strict complementarity for the SDP relaxation of a QCQP~\cite{burer2019exact,locatelli2016exactness,wang2021tightness}.\footnote{Technically, these papers establish that the optimal values or optimal solutions of the SDP relaxation coincide with that of the underlying QCQP. Nonetheless, many of these sufficient conditions prove the intermediate result of strict complementarity.} 
See also \cite{de2019strict,ding2021simplicity} for work establishing strict complementarity in SDPs arising in combinatorial or statistical contexts. Finally, strict complementarity can also be shown to hold with high probability in the setting of noiseless (abstract) phase retrieval or low-rank covariance estimation~\cite{ding2023sharp}.

\subsection{Overview and outline of the paper}
In this paper, we develop a new FOM for rank-$k$ exact QMP-like SDPs.
This FOM enjoys low iteration complexity, simple iterative subprocedures, storage optimality, and strong numerical performance.
A summary of our contributions, along with an outline of the remainder of this paper, is as follows.
For the sake of presentation, we will assume that $W$ corresponds to the first $n-k$ coordinates of $\R^n$ in the following outline.

\begin{itemize}
	\item We close this section by discussing thematically related work in storage-optimal or storage-efficient FOMs for solving SDPs and FOMs for solving the GTRS. We then discuss some work on acceleration within FOMs with inexact prox oracles and FOMs for saddle-point problems as these are related to our techniques.
	\item In \cref{sec:strongly_convex_reform}, we show how to reformulate a $k$-exact SDP as a \emph{strongly convex} quadratic matrix minimax problem (QMMP) using a \emph{certificate of strict complementarity} (see \cref{def:subspace_certificate}).
	There are two key ideas here:
	First, in the setting of $k$-exact SDPs, we may parameterize the rank-$k$ matrices in $\S^n_+$ which agree with the restriction of $Y^*$ to $W^\perp$ as
	\begin{align*}
	Y(X) \coloneqq \begin{pmatrix}
		 XX^\intercal & X(Z^*)^{1/2}\\
		 (Z^*)^{1/2}X^\intercal & Z^*
	\end{pmatrix},
	\end{align*}
	where $Z^*\succ 0$ is the known restriction of $Y^*$ to $W^\perp$ and $X\in\R^{(n-k)\times k}$ is unknown.
	The task of recovering $Y^*$ then reduces to the task of recovering $X^*$.
	We replace the variable $Y\in\S^n_+$ with the parameterization $Y(X)$ in the primal SDP to derive a \emph{nonconvex} QMP in the variable $X$ whose optimizer is $X^*$.
	This first step can be compared to the Burer--Monteiro reformulation (see \cref{subsec:bm_comparison}).
	The second key idea then shows that this nonconvex QMP can be further reformulated into a strongly convex QMMP~\eqref{eq:strong_convex_reform} given a certificate of strict complementarity $\cU\subseteq\R^m$. \cref{thm:strong_convex_reform} verifies that the minimax problem
	\begin{align}
	\label{eq:intro_qmmp}
	\min_{X\in\R^{(n-k)\times k}} \max_{\gamma\in\cU} \bigg(\ip{M(\gamma), Y(X)} + d(\gamma)\bigg)
	\end{align}
	has $X^*$ as its unique optimizer and $\Opt_{\eqref{eq:primal_dual_SDP}}$ as its optimal value.
    This reformulation is inspired by a similar reformulation for the SDP relaxation of the GTRS given in \cite[Lemma 3]{wang2021implicit}. The extension from \cite[Lemma 3]{wang2021implicit} to \cref{thm:strong_convex_reform} is a conceptual contribution that recognizes that the assumptions of \cite[Lemma 3]{wang2021implicit} can be reinterpreted as strict complementarity, a standard assumption in algorithms for SDPs. Nonetheless, making \cref{thm:strong_convex_reform} algorithmic requires handling a number of challenges not present in the GTRS setting. Properly dealing with these challenges is the content of \cref{sec:qmmp,sec:solving_exact_sdps} and is the main technical contribution of this work.

\item In \cref{sec:qmmp}, we derive a two-level accelerated FOM for solving strongly convex QMMPs of the form \eqref{eq:intro_qmmp}. 
	Due to the minimax structure of  \eqref{eq:intro_qmmp}, we focus on Nesterov's optimal method for strongly convex minimax problems~\cite[Algorithm 2.3.13]{nesterov2018lectures}. This algorithm relies on a prox-map (see \cref{def:prox_map}) computation in each iteration, and its analysis assumes that prox-map is given by an explicit expression or can be computed exactly. 
In our setting, the prox-map will not admit a closed-form expression in general. Instead, we will treat the prox-map as an optimization problem in its own right and solve it via an inner FOM.
	Therefore, we suggest CautiousAGD (\cref{alg:cautious-agd}), a new variant of~\cite[Algorithm 2.3.13]{nesterov2018lectures} that handles inexact computations in the prox-map procedure.
	We extend the original estimating sequences analysis of \cite[Algorithm 2.3.13]{nesterov2018lectures} to prove bounds on the accuracy required in each individual prox-map computation to recover an accelerated linear convergence rate in terms of outer iterations (see \cref{thm:cautious_agd_oracle}).
	In our case, the prox-map can be computed efficiently using an inner loop via the strongly convex excessive gap technique~\cite[Chapter 6.2]{nesterov2018lectures}. In all, CautiousAGD computes an $\epsilon$-optimal solution of a QMMP after $O\left(\log\left(\epsilon^{-1}\right)\right)$ outer iterations and $O\left(\epsilon^{-1/2}\right)$ total inner iterations.
	\item In \cref{sec:solving_exact_sdps}, we show how to combine any method for producing iterates $\gamma^{(i)}\to\gamma^*$ with CautiousAGD to construct a certificate of strict complementarity. Combined with \cref{alg:cautious-agd}, this completes the description of our new FOM, CertSDP (\cref{alg:exact_sdp_qmmp}), for rank-$k$ exact QMP-like SDPs. Informally, we show that CertSDP returns an $\epsilon$-optimal solution to the underlying SDP after performing a fixed (i.e., independent of $\epsilon$) number of iterations of $\gamma^{(i)}\to\gamma^*$ plus either $O\left(\log\left(\epsilon^{-1}\right)\right)$ outer iterations or $O\left(\epsilon^{-1}\right)$ inner iterations in CautiousAGD. See \cref{thm:overall_exact_sdp_qmmp} for a formal statement.
    In this way, CautiousAGD \emph{can be viewed as a termination rule} for any existing SDP algorithm that produces a sequence of dual iterates: given a sufficiently accurate dual iterate, CautiousAGD produces a high-accuracy primal solution.
	\item In \cref{sec:numerical}, we present numerical experiments comparing an implementation of CertSDP with similar convex-optimization-based algorithms from the literature~\cite{ding2021optimal,yurtsever2021scalable,souto2020exploiting,odonoghue2016conic}, as well as the Burer-Monteiro Method~\cite{burer2003nonlinear}, on random sparse $k$-exact SDP instances with $n \approx 10^3$, $10^4$, and $10^5$. Our code outperforms previous state of the art convex-optimization-based algorithms and was the only such algorithm able to solve our largest instances to high accuracy. Additional experiments with stylized phase retrieval instances are summarized in \cref{subsec:additional} and discussed in detail in \cref{sec:additional_exp}.
	\item In \cref{sec:limitations}, we discuss limitations of the current work and suggest possible future directions for addressing these issues.
\end{itemize}

\subsection{Related work}
\label{subsec:related_work}

\paragraph{Storage-optimal/efficient FOMs.} A growing body of literature, itself containing multiple research strands, has explored FOMs for SDPs~\cite{lu2007large,lanprimal,baes2013randomized,ben2012solving,d2014stochastic,ding2021optimal,yurtsever2021scalable,yurtsever2019conditional,souto2020exploiting,odonoghue2016conic,yang2021inexact,friedlander2016low,majumdar2020recent,shinde2021memory}. 
Below, we recount some recent developments in this direction with a particular view towards storage-efficient or \emph{storage-optimal} FOMs for SDPs admitting low-rank solutions.
Storage-optimality alludes to the fact that a rank-$k$ PSD matrix $Y\in\S^n_+$ can be represented as the outer product of an $n\by k$ \emph{factor matrix} with itself, i.e., $Y = XX^\intercal$ for some $X\in\R^{n\by k}$, so that a primal iterate with rank $k$ can be implicitly stored using only $O(nk)$ memory.
Similarly, a dual iterate may be stored using only $O(m)$ memory. Then, a storage-optimal FOM is allowed to use only $O(m + nk)$ storage where $k$ is the rank of the \emph{true} primal SDP solution.

Low-storage and storage-optimal FOMs are particularly attractive for SDPs where $M_\obj,\,M_1,\dots,\,M_m$ are either structured or sparse, so that it is possible to not only store the instance efficiently, but also to compute matrix-vector products efficiently~\cite{ding2021optimal}. The algorithm that we develop in this paper follows this pattern and similarly interacts with $M_\obj,\,M_1,\dots,\,M_m$  via \emph{only} matrix-vector products.

One paradigm towards developing storage-optimal FOMs leverages duality to construct surrogate primal SDPs that can be solved with optimal storage.
In this paradigm,
the variable $Y\in\S^n_+$ is \emph{compressed}, i.e., replaced with $U\overline YU^\intercal$ for some matrix $U\in\R^{n\by k}$ and $\overline Y\in\S^k_+$.
\citet{ding2021optimal} give rigorous guarantees for such a method assuming strict complementarity. Specifically, they show that
if $U\in\R^{n\by k}$ corresponds to a minimum eigenspace of an approximate dual solution, then the optimal solution $\overline Y$ of the compressed SDP (in penalty form) is a good approximation of the true primal solution.
Then, combining their bounds with existing FOMs for solving the dual SDP approximately, \citet{ding2021optimal} show that $\norm{U\overline Y U^\intercal - Y^*}_F \leq \epsilon$ after $O\left(\epsilon^{-2}\right)$-many minimum eigenvector computations.
It is unclear how this convergence guarantee changes when only approximate eigenvector computations (which are the only practical option) are allowed.
\citet{friedlander2016low} explore a similar idea for trace-minimization SDPs (i.e., SDPs where $M_{\obj} = I$) from the viewpoint of \emph{gauge duality}. Specifically, they show that if $U$ corresponds to a minimum eigenspace associated with the true solution to the gauge dual, then the optimal solution of the compressed SDP exactly recovers the true primal SDP solution.
Unfortunately, they do not analyze the accuracy of the recovered primal solution when the gauge dual is solved only approximately, which is the case in practice.

A second paradigm towards developing storage efficient/optimal FOMs works simultaneously in both the primal and dual spaces by employing \emph{linear sketches}.
\citet{yurtsever2021scalable} apply the Nystr\"om sketch to the conditional gradient--augmented Lagrangian (CGAL) technique~\cite{yurtsever2019conditional} to derive SketchyCGAL. They show that it is possible to reconstruct a $(1+\zeta)$-optimal rank-$k$ approximation of an $\epsilon$-optimal solution to the primal SDP\footnote{In \cite{yurtsever2021scalable}, a rank-$k$ matrix $\tilde Y\in\S^n_+$ is a $(1+\zeta)$-optimal rank-$k$ approximation of an $\epsilon$-optimal solution $Y_\epsilon\in\S^n_+$ if $\norm{Y_\epsilon - \tilde Y}_* \leq (1+\zeta) \norm{Y_\epsilon - [Y_\epsilon]_k}_*$ where $\norm{\cdot}_*$ is the nuclear norm and $[Y_\epsilon]_k$ is the best rank-$k$ approximation of $Y_\epsilon$.} 
by tracking only the dual iterates as well as a $O(nk/\zeta)$-sized sketch of the primal iterates. 
When the true solution is unique and has rank-$k$, it is appropriate to take $\zeta = O(1)$ so that the total storage is $O(m + nk)$.
Furthermore, \citet{yurtsever2021scalable} bound the required accuracy in the approximate eigenvector computations within SketchyCGAL. In all, they show that it is possible to implement their algorithm in $O\left(\epsilon^{-2}\right)$ iterations where each iteration involves computing an eigenvector via $\tilde O\left(\epsilon^{-1/2}\right)$ matrix-vector products.
In follow-up work, \citet{shinde2021memory} combine the algorithmic architecture of SketchyCGAL with the additional observation that in specific applications (e.g., max-cut), the goal is simply to \emph{sample} from a Gaussian distribution with variance given by an approximate solution $Y_\epsilon$ to the SDP. Under this alternate goal, it is possible to further reduce the storage requirements to $O(n + m)$.

One may compare these storage-optimal FOMs for SDPs with the Burer--Monteiro method~\cite{burer2003nonlinear}. In the Burer--Monteiro method, the convex SDP in the variable $Y\in\S^n_+$ is explicitly replaced with an outer product term involving an $n\by k'$ factor matrix where $k'\geq k$. The resulting \emph{nonconvex} problem is then tackled via local optimization methods.
While results~\cite{boumal2016non,cifuentes2019polynomial,cifuentes2021burer} have shown that non-global local minima cannot exist when $k'=\Omega(\sqrt{m})$ (so that local optimization methods are certifiably correct) for large classes of SDPs, more recent work~\cite{waldspurger2020rank} has shown that such spurious local minima can in fact exist even if $k = 1$ and $k' = \Theta(\sqrt{m})$. In other words, the Burer--Monteiro approach provably cannot achieve storage-optimality.

\paragraph{FOMs for the GTRS.} The algorithms developed in the current paper are 
inspired by recent developments in FOMs for the TRS and the GTRS.
There has been extensive work~\cite{carmon2018analysis,wang2020generalized,wang2021implicit,jiang2019novel,hazan2016linear,hoNguyen2017second} 
towards developing customized algorithms for the TRS and GTRS that circumvent solving large SDPs; see \cite{wang2021implicit} and references therein for a more thorough account of algorithmic ideas for solving large-scale GTRS instances. We highlight only the two most relevant results from this area.

\citet{carmon2018analysis} consider iterative methods that produce Krylov subspace solutions to the TRS, i.e., solutions to the TRS restricted to a Krylov subspace generated by the objective function.
They show that these solutions converge to the true TRS solution \emph{linearly} as long as the linear term in $q_\obj$ is not orthogonal to the minimum eigenspace of the Hessian in $q_\obj$.
One may show that under this assumption, the quadratic function $q_\obj(x) + \gamma^*\left(\norm{x}^2 - 1\right)$ is strongly convex (here, $\gamma^*$ is the dual solution and is known to exist).
We may interpret this as strict complementarity between the SDP relaxation of the TRS and its dual. Indeed, the Hessian of this quadratic function (a positive definite matrix) shows up as a principal minor in the slack matrix $M(\gamma^*)$, proving that $\rank(M(\gamma^*)) = n- 1$.

More recently, \citet{wang2021implicit} make a connection between the GTRS and optimal FOMs for strongly convex minimax problems~\cite{nesterov2018lectures}.
The main algorithmic contributions in \cite{wang2021implicit} assume
dual strict feasibility of the SDP relaxation and that the dual problem has a maximizer $\gamma^*$ in the interior of its domain. As a consequence, the quadratic function $q_\obj(x) + \gamma^* q_1(x)$ is strongly convex. Again, we may interpret this as a strict complementarity assumption.
\citet{wang2021implicit} then show how to construct a strongly convex reformulation of the GTRS using low-accuracy eigenvalue computations.
More concretely, they show how to construct $\tilde\gamma_-$ and $\tilde\gamma_+$ such that the minimax problem
\begin{align*}
\min_{x\in\R^n}\max_{\gamma\in\left[\tilde\gamma_-, \tilde\gamma_+\right]}\left(q_\obj(x) + \gamma \cdot q_1(x)\right)
\end{align*}
is strongly convex and has as its unique optimizer the optimizer of the underlying GTRS. The resulting strongly convex minimax problem is then solved via \cite[Algorithm 2.3.13]{nesterov2018lectures} to achieve a linear convergence rate.
One may compare the strongly convex reformulation of the GTRS in \cite{wang2021implicit} with the more natural Lagrangian reformulation (through S-lemma):
\begin{align*}
\min_{x\in\R^n}\sup_{\gamma\in\Gamma} \left(q_\obj(x) + \gamma \cdot q_1(x)\right)
\end{align*}
where $\Gamma = \set{\gamma\in\R_+:\, q_\obj + \gamma q_1 \text{ is convex}}$.
Specialized FOMs have also been developed for the GTRS using this Lagrangian reformulation~\cite{wang2020generalized}.
Unfortunately, since the Lagrangian reformulation may not be strongly convex in general, the resulting algorithms can only achieve sublinear (in terms of $\epsilon$) convergence rates---specifically, rates of the form $O\left(\epsilon^{-1/2}\right)$ as opposed to rates of the form $O\left(\log(\epsilon^{-1})\right)$.

\paragraph{Accelerated FOMs for non-smooth problems via saddle-point problems.}
One may treat the QMMP reformulation \eqref{eq:intro_qmmp} of the SDP as a saddle-point problem in the variables $(X,\gamma)\in\R^{(n-k)\times k}\times \R^m$ as opposed to a non-smooth problem in just $X\in\R^{(n-k)\times k}$.
There is a vast body of work developing accelerated FOMs for non-smooth problems that leverages saddle-point structure~\cite{nesterov2018lectures,palaniappan2016stochastic,juditsky2011first,nemirovski2004prox}.
Both \citet{nesterov2005smooth} and \citet{nemirovski2004prox} achieve an accelerated convergence rate of $O(\epsilon^{-1})$ for general convex--concave saddle point problems (see also \cite{tseng2008accelerated}).
This rate can be further improved for the special case of strongly convex--concave saddle-point problems~\cite{juditsky2011first,nesterov2005excessive,chambolle2016ergodic}:
Nesterov's excessive gap technique~\cite{nesterov2005excessive,nesterov2018lectures} achieves an $O(\epsilon^{-1/2})$ convergence for 
strongly convex--concave saddle-point problems where the coupling term is linear.
This is generalized in~\cite{chambolle2016ergodic} to allow nonlinear proximal operators.
\citet{juditsky2011first,hamedani2021primal} generalize this convergence rate to the setting where the gradient of the coupling term is only assumed to be Lipschitz.
These rates match the known~\cite{ouyang2021lower} lower bound of $O(\epsilon^{-1/2})$ for any FOM on the general class of strongly convex-concave saddle-point problems.
Note that the assumption that the gradient of the coupling term is Lipschitz does not hold for our setting. Indeed, the saddle point function we are interested in, $\ip{M(\gamma),\, Y(X)}$, is jointly \emph{cubic} in the variables $(X,\gamma)$ (so that the gradients vary quadratically). Nonetheless, we will show that it is possible achieve the optimal $O(\epsilon^{-1/2})$ iteration complexity in our setting.

\paragraph{Accelerated FOMs with inexact first-order information.}
A related line of work~\cite{devolder2013first,devolder2014first} has analyzed the convergence rate of (accelerated) FOMs in the presence of \emph{inexact first-order information}. \citet{devolder2014first} analyzes FOMs for smooth convex functions. In \cite{devolder2013first}, the same authors extend these results to FOMs for smooth and strongly convex functions.
Our algorithm (\cref{alg:cautious-agd}) continues this line of work by considering an \emph{inexact prox-map} for strongly convex \emph{max-type functions}.
Our work additionally complements work on inexact gradients within prox-grad methods for composite optimization problems (note that max-type functions cannot in general be decomposed as composite optimization problems).

\subsection{Notation}
For a positive integer $n$, let $[n]\coloneqq\set{1,\dots,n}$.
Let $\S^n$ denote the set of $n\by n$ real symmetric matrices and let $I_n$ denote the $n\by n$ identity matrix.
Given $X\in\S^n$, we write $X\succeq 0$ (resp.\ $X\succ 0$) if $X$ is positive semidefinite (resp.\ positive definite). Let $\S^n_+$ be the positive semidefinite cone.
Given $X\in\R^{n\by m}$ let $X^\intercal$, $\ker(X)$, $\range(X)$, $\rank(X)$, $\norm{X}_F$ and $\norm{X}_2$  denote the transpose, kernel, range, rank, Frobenius norm, and spectral norm of $X$.
Given $X\in\R^{n\by n}$ let $\tr(X)$ denote the trace of $X$.
We endow both $\S^n$ and $\R^{n\by m}$ with the trace inner product $\ip{X,Y} \coloneqq \tr(X^\intercal Y)$.
Let $\bS^{n-1}\subseteq\R^n$ denote the unit sphere.
Given $x\in\R^n$ and $r\geq 0$, let $\mathbb{B}(x,r)$ denote the closed $\ell_2$-ball centered at $x$ with radius $r$.
Given a function in multiple arguments $f(x_1,\dots,x_m)$, we write $\grad_k f(x_1,\dots,x_m)$ to denote the gradient of $f$ in the $k$th argument evaluated at $x_1,\dots,x_m$.

Given a subspace $W\subseteq\R^n$, let $W^\perp$ denote its orthogonal complement.
Abusing notation, we write $\S^W$ for the vector space of self-adjoint operators on $W$ and $\R^{W,W^\perp}$ for the vector space of linear maps from $W^\perp$ to $W$.
Given $M\in\S^n$, let $M^{}_W\in\S^W$, $M^{}_{W,W^\perp}\in\R^{W,W^\perp}$, and $M^{}_{W^\perp}\in\S^{W^\perp}$ denote the restrictions of $M$ to the corresponding subspaces. Explicitly,
\begin{gather*}
    M^{}_W:\quad x\in W\quad\mapsto\quad (\Pi_W M \Pi_W)x \in W,\\
    M^{}_{W,W^\perp}:\quad x\in W^\perp \quad\mapsto\quad (\Pi_W M \Pi_{W^\perp}) x \in W,\\
    M^{}_{W^\perp}: \quad x\in W^\perp \quad\mapsto \quad (\Pi_{W^\perp} M \Pi_{W^\perp}) x\in W^\perp,
\end{gather*}
where $\Pi_W$ and $\Pi_{W^\perp}$ are the orthogonal projections onto $W$ and $W^\perp$.
When $W$ is the vector space corresponding to the first $n-k$ coordinates, we may identify $M^{}_W$,
$M^{}_{W,W^\perp}$, and $M^{}_{W^\perp,W^\perp}$
with the top-left $(n-k)\by (n-k)$ submatrix,  top-right $(n-k)\by k$ submatrix, and bottom-right $k\by k$ submatrix of $M$ respectively.

\section{Strongly convex reformulations of $k$-exact SDPs}
\label{sec:strongly_convex_reform}

In this section, we describe how to construct a strongly convex reformulation of a \emph{rank-$k$ exact QMP-like SDP} using a \emph{certificate of strict complementarity} (see \cref{def:subspace_certificate,def:exact_sdp}). The following sections will expand on these ideas and show how these properties can be exploited to achieve algorithmic efficiency.

\subsection{Definitions and problem setup}
\label{subsec:exact_sdp}

We make the following two assumptions on \eqref{eq:primal_dual_SDP}.

\begin{assumption}
\label{as:exact_sdp}
Assume in \eqref{eq:primal_dual_SDP} that the primal and dual problems are both solvable, strong duality holds, and there exist primal and dual optimal solutions $Y^*\in\S^n$ and $\gamma^*\in\R^m$ such that $\rank(Y^*)  = k$ and $\rank(M(\gamma^*)) = n - k$.
\end{assumption}
We fix $Y^*$ and $\gamma^*$ to be solutions to \eqref{eq:primal_dual_SDP} satisfying $\rank(Y^*) = k$ and $\rank(M(\gamma^*)) = n - k$.

\begin{assumption}
\label{as:W_a_priori}
Let $W\subseteq\R^n$ be an $n-k$-dimensional subspace such that the restriction of $Y^*$ to $W^\perp$ is known and positive definite.
\end{assumption}

\begin{definition}
\label{def:exact_sdp}
We say that an instance of \eqref{eq:primal_dual_SDP} is a \emph{rank-$k$ exact QMP-like SDP} or a \emph{$k$-exact SDP} for short if both \cref{as:exact_sdp,as:W_a_priori} hold.
\ifjelse{\qed}{}
\end{definition}

\begin{definition}
\label{def:subspace_certificate}
We say that a compact subset $\cU \subseteq \R^m$ 
\emph{certifies strict complementarity} if $\gamma^*\in \cU$ and, for all $\gamma\in\cU$, it holds that $M(\gamma)^{}_W\succ 0$.
\ifjelse{\qed}{}
\end{definition}

\begin{remark}
Suppose we are given a certificate of strict complementarity $\cU$, i.e., $\gamma^*\in\cU$ and $M(\gamma)^{}_W \succ 0$ for all $\gamma\in\cU$. We immediately deduce that $\rank(M(\gamma^*))\geq \rank(M(\gamma^*)^{}_W) = n - k$.
On the other hand, $\rank(Y^*) \geq \rank(Y^*_{W^\perp}) = k$.
This is the sense in which $\cU$ \emph{certifies strict complementarity}.
\ifjelse{\qed}{}
\end{remark}

The following lemma shows that $M(\gamma^*)_W\succ 0$ so that certificates of strict complementarity exist.
\begin{lemma}
    \label{lem:complementary_subspace}
    Suppose $M^*,\,Y^*\in\S^n_+$ have rank $n-k$ and $k$ respectively and that $\ip{M^*,Y^*}=0$. Let $W$ be an $(n-k)$-dimensional subspace. Then, $M^*_{W}\succ 0$ if and only if $Y^*_{W^\perp}\succ 0$.
    \end{lemma}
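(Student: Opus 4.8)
The plan is to prove both implications by a single rank-counting argument, using the fact that $M^*$ and $Y^*$ have complementary ranks and that their inner product vanishes. First I would record the key observation that $\ip{M^*, Y^*} = 0$ together with $M^*, Y^* \succeq 0$ forces $\range(Y^*) \subseteq \ker(M^*)$: writing $Y^* = \sum_j \lambda_j v_j v_j^\intercal$ with $\lambda_j > 0$, the identity $0 = \ip{M^*, Y^*} = \sum_j \lambda_j v_j^\intercal M^* v_j$ and the positive semidefiniteness of $M^*$ give $v_j^\intercal M^* v_j = 0$, hence $M^* v_j = 0$ for each $j$. Since $\dim\range(Y^*) = k = n - \rank(M^*) = \dim\ker(M^*)$, this inclusion is in fact an equality: $\range(Y^*) = \ker(M^*)$.

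Next I would prove the forward direction. Suppose $M^*_W \succ 0$; I want $Y^*_{W^\perp} \succ 0$. Since $\dim W^\perp = k = \rank(Y^*)$, it suffices to show that $Y^*_{W^\perp}$ is nonsingular, i.e., that no nonzero $w \in W^\perp$ lies in $\ker(Y^*_{W^\perp})$. If $w \in W^\perp$ with $w^\intercal Y^* w = 0$, then by positive semidefiniteness $Y^* w = 0$, so $w \in \ker(Y^*)$. But $\ker(Y^*)$ is the orthogonal complement of $\range(Y^*) = \ker(M^*)$, so $w \perp \ker(M^*)$, meaning $w \in \range(M^*)$ (using symmetry of $M^*$). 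Now I claim $W \cap \range(M^*) = \{0\}$: indeed $M^*_W \succ 0$ means $u^\intercal M^* u > 0$ for all nonzero $u \in W$, in particular $\ker(M^*) \cap W = \{0\}$, and since $\dim W + \dim\ker(M^*) = (n-k) + k = n$ we get $\R^n = W \oplus \ker(M^*)$; dually $\R^n = W^\perp \oplus \range(M^*)$ as well... actually the cleanest route: from $w \in W^\perp \cap \range(M^*)$ and the decomposition $\R^n = W^\perp \oplus \range(M^*)$ would give $w = 0$ only if that sum is direct, which I should verify. Alternatively, and more directly, combine $w \in W^\perp$ and $w \in \range(M^*) = \ker(M^*)^\perp$; I'd instead argue: $w \in W^\perp$ and $w \in \ker(Y^*) = \range(Y^*)^\perp = \ker(M^*)^\perp$. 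Since $\ker(M^*) \oplus W = \R^n$ (shown above), taking orthogonal complements gives $\ker(M^*)^\perp + W^\perp = \R^n$, but I need the intersection. Let me reorganize: it is cleaner to show $\ker(M^*) \cap W = \{0\}$ and $\ker(Y^*) \cap W^\perp = \{0\}$ are equivalent by the dimension count, and each is equivalent to the corresponding definiteness statement.

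So the streamlined argument I would actually write is: by the observation above, $\ker(Y^*) = \range(Y^*)^\perp = \ker(M^*)^\perp = \range(M^*)$ (all using symmetry). Then $Y^*_{W^\perp} \succ 0 \iff \ker(Y^*) \cap W^\perp = \{0\} \iff \range(M^*) \cap W^\perp = \{0\} \iff \ker(M^*) \cap W = \{0\} \iff M^*_W \succ 0$, where the middle equivalence $\range(M^*)\cap W^\perp = \{0\} \iff \ker(M^*) \cap W = \{0\}$ follows because for subspaces $U$ of dimension $n-k$ and $V$ of dimension $k$ one has $U \cap V^\perp = \{0\} \iff \dim(U + V^\perp) = n \iff U^\perp \cap V = \{0\}$ (the complementary-dimensions Grassmann identity), applied with $U = \range(M^*)$, $V = W$, noting $\dim\range(M^*) = n-k$ and $\dim W = n-k$ so $\dim W^\perp = k$. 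The one point deserving care — the main obstacle, such as it is — is keeping the dimension bookkeeping straight: the equivalence of the two ``trivial intersection'' conditions genuinely uses $\rank(M^*) + \rank(Y^*) = n$ (strict complementarity) and $\dim W = n - k$; without those the implications would only go one way. Everything else is routine linear algebra, so the write-up should be short.
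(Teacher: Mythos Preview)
Your streamlined argument is correct and rests on the same linear algebra as the paper's proof: both reduce to the identity $\range(Y^*)=\ker(M^*)$ (which you make explicit and the paper uses implicitly at the end) together with a dimension count relating $\ker(M^*)\cap W=\{0\}$ to $\ker(Y^*)\cap W^\perp=\{0\}$; the paper packages this as a contrapositive for one direction and then invokes symmetry of the roles of $M^*,Y^*$, whereas your chain of equivalences handles both directions at once. One minor slip to fix in the write-up: in your Grassmann-type fact you say ``$V$ of dimension $k$'' but then apply it with $V=W$ of dimension $n-k$; the hypothesis you actually need (and which holds) is $\dim U+\dim V^\perp=n$, i.e., $\dim U=\dim V$.
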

    \begin{proof}
    It suffices to prove the forward direction as we may interchange the roles of $Y^*$ and $M^*$.   
    
    We prove the contrapositive. Suppose $Y^*_{W^\perp}\not\succ 0$ so that $\ker(Y^*_{W^\perp})$ is nontrivial.
    As $Y^*\succeq 0$, we have that in fact $\ker(Y^*)\cap W^\perp$ is nontrivial.
    Then, $\range(Y^*)$ is a $k$-dimensional subspace contained in $(\ker(Y^*)\cap W^\perp)^\perp$. Similarly, $W$ is an $(n-k)$-dimensional subspace contained in $(\ker(Y^*)\cap W^\perp)^\perp$. Then, as $(\ker(Y^*)\cap W^\perp)^\perp$ has dimension at most $n - 1$, we deduce that $\range(Y^*)\cap W$ is nontrivial and $\ip{Y^*, M^*} >0$, a contradiction.
    \end{proof}

\subsection{Identifying $\S^n$ with quadratic matrix functions}
\label{subsec:Sn_to_quadratic_matrix_function}
Suppose \eqref{eq:primal_dual_SDP} is a $k$-exact SDP and that $\cU$ certifies strict complementarity.
For ease of presentation, we will assume in this subsection that $W$ is the $(n-k)$-dimensional subspace corresponding to the first $n-k$ coordinates of $\R^n$. This is without loss of generality and our results extend in the natural way to the setting where $W$ is general (see \cref{rem:coordinate_to_general_submatrix_to_restrictions}).

Our strongly convex reformulation of \eqref{eq:primal_dual_SDP} will regard the $M_i\in\S^n$ as inducing \emph{quadratic matrix functions} on the space $\R^{W\times W^\perp}\simeq \R^{(n-k)\times k}$.
We begin by writing each $M_i$, for $i\in\set{\obj}\cup[m]$, as a block matrix
\begin{align*}
M_i = \begin{pmatrix}
	A_i/2 & \tilde B_i/2\\
	\tilde B_i^\intercal /2 & C_i
\end{pmatrix},
\end{align*}
where $A_i\in\S^{n-k}$, $\tilde B_i\in\R^{(n-k)\times n}$ and $C_i\in\S^k$.

We will partition $Y^*$ as a block matrix with compatible block structure:
Define $Z^* \coloneqq Y^*_{W^\perp}$ and $X^* \coloneqq Y^*_{W,W^\perp} (Z^*)^{-1/2}$.
Note here that $Z^*$ is known \emph{a priori} due to \cref{as:W_a_priori}.
Next, by the assumption that $\rank(Y^*) = k$ (\cref{as:exact_sdp}), we have that
\begin{align*}
Y^* &= \begin{pmatrix}
	X^*X^{*\intercal} & X^*(Z^*)^{1/2}\\
	(Z^*)^{1/2}(X^*)^\intercal & Z^*
\end{pmatrix}.
\end{align*}

Finally, given $X\in\R^{(n-k)\times k}$, define
\begin{align*}Y(X) \coloneqq \begin{pmatrix}
	XX^\intercal & X(Z^*)^{1/2}\\
	(Z^*)^{1/2}X^\intercal & Z^*
\end{pmatrix}
\end{align*}
and note that $Y(X^*) = Y^*$.

One of our key ideas in building a strongly convex reformulation of \eqref{eq:primal_dual_SDP} is that $Y(X)$ is a matrix whose entries are \emph{quadratic} in $X$.  We can thus identify each $M_i$ with a quadratic matrix function. For each $i\in\set{\obj}\cup[m]$, define
\begin{align*}
q_i(X) \coloneqq \ip{M_i, Y(X)} + d_i &= \frac{\tr(X^\intercal A_i X)}{2} + \ip{\tilde B_i(Z^*)^{1/2},X} + \ip{C_i,Z^*} + d_i\\
&=\frac{\tr(X^\intercal A_i X)}{2} + \ip{B_i,X} + c_i,
\end{align*}
where we have defined $B_i \coloneqq \tilde B_i (Z^*)^{1/2}$ and $c_i \coloneqq \ip{C_i, Z^*} + d_i$.
Finally, given $\gamma\in\R^m$, define $A(\gamma) \coloneqq A_\obj + \sum_{i=1}^m \gamma_i A_i$. We define $B(\gamma),\,\tilde B(\gamma),\, c(\gamma),\,d(\gamma)$, and $q(\gamma,X)$ analogously:
\begin{align*}
    &B(\gamma) \coloneqq B_\obj + \sum_{i=1}^m \gamma_i B_i,
    \hspace{2.5em}
    \tilde B(\gamma) \coloneqq \tilde B_\obj + \sum_{i=1}^m \gamma_i \tilde B_i,
    \hspace{2.5em}    
    c(\gamma) \coloneqq c_\obj + \sum_{i=1}^m \gamma_i c_i,\\
    &d(\gamma) \coloneqq d_\obj + \sum_{i=1}^m \gamma_i d_i,
     \hspace{4em}   
    q(\gamma,X) \coloneqq q_\obj(X) + \sum_{i=1}^m \gamma_i q_i(X).
\end{align*}

\begin{remark}
\label{rem:coordinate_to_general_submatrix_to_restrictions}
It is without loss of generality to assume that $W$ is the first $(n-k)$ coordinate space when proving the \emph{structural} results of this section. Only small notational changes need to be made when we go from $W$ being the first $(n-k)$-coordinate space to a general $(n-k)$-dimensional subspace of $\R^n$.
In the general setting, we define
\begin{gather*}
    A_i = 2(M_i)^{}_W
    \qquad
    \tilde B_i = 2 (M_i)^{}_{W,W^\perp}
    \qquad
    C_i = (M_i)^{}_{W^\perp}\\
    A(\gamma) = 2(M(\gamma))^{}_W
    \qquad
    \tilde B(\gamma) = 2 (M(\gamma))^{}_{W,W^\perp}
    \qquad
    C(\gamma) = (M(\gamma))^{}_{W^\perp}.
\end{gather*}
We again define $B_i = \tilde B_i (Z^*)^{1/2}$ and $B(\gamma) = \tilde B(\gamma) (Z^*)^{1/2}$.
Quantities in $\R^{(n-k)\by k}$ from the coordinate setting are replaced by quantities in $\R^{W\by k}$, where $\R^{W\by k}$ denotes the subspace of matrices in $\R^{n\by k}$ where every column lives in $W$.\ifjelse{\qed}{}
\end{remark}

\subsection{A strongly convex reformulation of \eqref{eq:primal_dual_SDP}}
\label{subsec:exactness_qmmp}

The following theorem states that if $\cU$ certifies strict complementarity, then $X^*$ is the unique minimizer of a strongly convex \emph{quadratic matrix minimax problem} (QMMP) that can be constructed from $\cU$.
This reformulation is inspired by a recent strongly convex reformulation of the GTRS \cite{wang2021implicit}.
\begin{theorem}
\label{thm:strong_convex_reform}
Suppose \eqref{eq:primal_dual_SDP} is a rank-$k$ exact QMP-like SDP and that $\cU$ certifies strict complementarity.
Then, $X^*$ is the unique minimizer of the strongly convex QMMP
\begin{align}
\tag{$\text{QMMP}_\cU$}
\label{eq:strong_convex_reform}
\min_{X\in\R^{W\times W^\perp}}\max_{\gamma\in \cU} q(\gamma,X).
\end{align}
Furthermore, $X^* = - A(\gamma^*)^{-1}B(\gamma^*)$ and $\Opt_{\eqref{eq:strong_convex_reform}}= \Opt_{\eqref{eq:primal_dual_SDP}}$.
\end{theorem}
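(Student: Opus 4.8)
\emph{Proof plan.} I would work throughout in the coordinates fixed in this subsection, so that $W$ is the first $n-k$ coordinates and $A(\gamma)$ is twice the top-left block $M(\gamma)_W$. The first point is that for fixed $X$ the map $\gamma\mapsto q(\gamma,X)=q_\obj(X)+\sum_i\gamma_i q_i(X)$ is affine, so $f(X):=\max_{\gamma\in\cU}q(\gamma,X)$ is a well-defined function of $X$; moreover, since $\cU$ certifies strict complementarity, $A(\gamma)=2M(\gamma)_W\succ0$ for every $\gamma\in\cU$, so each $q(\gamma,\cdot)$ is a strongly convex quadratic in $X$. Because $\cU$ is compact and $\gamma\mapsto\lambda_{\min}(A(\gamma))$ is continuous and strictly positive on $\cU$, the modulus $\mu:=\min_{\gamma\in\cU}\lambda_{\min}(A(\gamma))$ is positive, and $f$ — being a supremum of $\mu$-strongly convex functions (in the Frobenius norm on $\R^{W\times W^\perp}$) — is itself $\mu$-strongly convex, hence has a unique minimizer. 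The plan is then to show: (i) $X^*=-A(\gamma^*)^{-1}B(\gamma^*)$ is the unique minimizer of $q(\gamma^*,\cdot)$; (ii) $f(X^*)=\Opt_{\eqref{eq:primal_dual_SDP}}$; and (iii) $f(X)\ge\Opt_{\eqref{eq:primal_dual_SDP}}$ for all $X$. Combined with uniqueness, (i)--(iii) identify $X^*$ as the unique minimizer of \eqref{eq:strong_convex_reform} and its optimal value as $\Opt_{\eqref{eq:primal_dual_SDP}}$.

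The crux is step (i), which I would extract from SDP complementary slackness. Since strong duality holds and $\gamma^*$ is dual optimal, $\ip{M(\gamma^*),Y^*}=\ip{M_\obj,Y^*}-d^\intercal\gamma^*=\Opt_{\eqref{eq:primal_dual_SDP}}-\Opt_{\eqref{eq:primal_dual_SDP}}=0$ (using primal feasibility of $Y^*$ for the first equality), and since $M(\gamma^*)$ and $Y^*$ are PSD this forces $M(\gamma^*)Y^*=0$. Now use the factorization $Y^*=Y(X^*)=FF^\intercal$ with $F=\begin{smallpmatrix}X^*\\(Z^*)^{1/2}\end{smallpmatrix}$, established before the theorem; as $Z^*\succ0$, $F$ has full column rank $k$, so $F^\intercal F$ is invertible, and multiplying $M(\gamma^*)FF^\intercal=0$ on the right by $F$ gives $M(\gamma^*)F=0$. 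Reading off the top $(n-k)\times k$ block of $M(\gamma^*)F=0$ and recalling $B(\gamma^*)=\tilde B(\gamma^*)(Z^*)^{1/2}$ yields the normal equation $A(\gamma^*)X^*+B(\gamma^*)=0$, i.e.\ $X^*=-A(\gamma^*)^{-1}B(\gamma^*)$ (valid since $A(\gamma^*)\succ0$ because $\gamma^*\in\cU$). As $q(\gamma^*,\cdot)$ is strongly convex, this stationarity condition makes $X^*$ its unique global minimizer. I expect this translation — from the matrix identity $M(\gamma^*)Y^*=0$ to the small-space normal equation $A(\gamma^*)X^*+B(\gamma^*)=0$ — to be the main (albeit short) obstacle; the remaining steps are bookkeeping.

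For (ii) and (iii) I would argue directly. For \emph{any} $\gamma\in\R^m$, primal feasibility of $Y^*$ gives $q(\gamma,X^*)=\ip{M(\gamma),Y(X^*)}+d(\gamma)=\ip{M_\obj,Y^*}+\sum_{i=1}^m\gamma_i\big(\ip{M_i,Y^*}+d_i\big)=\ip{M_\obj,Y^*}=\Opt_{\eqref{eq:primal_dual_SDP}}$, so in particular $f(X^*)=\max_{\gamma\in\cU}q(\gamma,X^*)=\Opt_{\eqref{eq:primal_dual_SDP}}$, which is (ii). For (iii), since $\gamma^*\in\cU$ we have, for every $X$, $f(X)\ge q(\gamma^*,X)\ge q(\gamma^*,X^*)=\Opt_{\eqref{eq:primal_dual_SDP}}$, where the middle inequality uses step (i). Chaining, $\Opt_{\eqref{eq:primal_dual_SDP}}\le\min_X f(X)\le f(X^*)=\Opt_{\eqref{eq:primal_dual_SDP}}$, so $\Opt_{\eqref{eq:strong_convex_reform}}=\Opt_{\eqref{eq:primal_dual_SDP}}$ and $X^*$ attains it; uniqueness of the minimizer of the strongly convex $f$ then completes the proof.
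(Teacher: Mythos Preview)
Your proof is correct and the overall architecture---establish strong convexity of $f=\max_{\gamma\in\cU}q(\gamma,\cdot)$, show $q(\gamma,X^*)=\Opt_{\eqref{eq:primal_dual_SDP}}$ for all $\gamma$ from primal feasibility, and sandwich via $f(X)\ge q(\gamma^*,X)\ge q(\gamma^*,X^*)$---matches the paper's. The genuine difference is in how you derive the normal equation $A(\gamma^*)X^*+B(\gamma^*)=0$. The paper expands the \emph{scalar} identity $0=\ip{M(\gamma^*),Y(X^*)}$ as a quadratic form in $X^*$, completes the square, and then invokes the rank condition $\rank(M(\gamma^*))=n-k$ to argue that $C(\gamma^*)=\tfrac{1}{2}\tilde B(\gamma^*)^\intercal A(\gamma^*)^{-1}\tilde B(\gamma^*)$, which kills the constant term and forces $X^*=-A(\gamma^*)^{-1}B(\gamma^*)$. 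You instead upgrade complementary slackness to the \emph{matrix} equation $M(\gamma^*)Y^*=0$ (valid because both factors are PSD with zero trace inner product), use the full-column-rank factorization $Y^*=FF^\intercal$ to conclude $M(\gamma^*)F=0$, and read off the top block directly. Your route is shorter and, notably, does not explicitly appeal to $\rank(M(\gamma^*))=n-k$; it only needs $A(\gamma^*)\succ 0$, which already follows from $\gamma^*\in\cU$. The paper's computation, on the other hand, makes the Schur-complement structure of $M(\gamma^*)$ visible, which is conceptually useful elsewhere (e.g., \cref{subsec:random_instances}).
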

\begin{proof}

Without loss of generality, we work in the basis where $W$ is the first $n-k$ coordinates of $\R^n$. Note that the assumption that $\cU$ certifies strict complementarity implies that $A(\gamma^*) = M(\gamma^*)_W^{} \succ 0$.

We begin by verifying that $X^* = -A(\gamma^*)^{-1}B(\gamma^*)$. By complementary slackness, we have
\begin{align*}
0 &= \ip{M(\gamma^*), Y(X^*)}\\
&= \tr\left(\begin{pmatrix}
	X^*\\
	(Z^*)^{1/2}
\end{pmatrix}^\intercal \begin{pmatrix}
	A(\gamma^*)/2 & \tilde B(\gamma^*)/2\\
	\tilde B(\gamma^*)^\intercal/2 & C(\gamma^*)
\end{pmatrix} \begin{pmatrix}
	X^*\\
	(Z^*)^{1/2}
\end{pmatrix}\right)\\
&= \tr\left(\frac{\left(X^* + A(\gamma^*)^{-1}B(\gamma^*)\right)^\intercal A(\gamma^*)\left(X^* + A(\gamma^*)^{-1}B(\gamma^*)\right)}{2}\right)\\
&\qquad +\left[\ip{C(\gamma^*),Z^*} - \tr\left(\frac{B(\gamma^*)^\intercal A(\gamma^*)^{-1}B(\gamma^*)}{2}\right)\right].
\end{align*}
Here, the second line follows by the definitions of $M(\gamma^*)$ and $Y(X^*)$, and the third line follows from the definition $B(\gamma) \coloneqq \tilde B(\gamma) (Z^*)^{1/2}$. 
We claim that the square-bracketed term on the final line is zero: By the assumption that $\rank(M(\gamma^*)) = n-k$ and the fact that $A(\gamma^*)\succ 0$, we have that $C(\gamma^*) = \frac{\tilde B(\gamma^*)^\intercal A(\gamma^*)^{-1}\tilde B(\gamma^*)}{2}$. Pre- and post-multiplying $C(\gamma^*)$ by $(Z^*)^{1/2}$ and taking the trace of this identity gives
\begin{align*}
\ip{C(\gamma^*),Z^*} &= \tr\left(\frac{(Z^*)^{1/2} \tilde B(\gamma^*)^\intercal A(\gamma^*)^{-1}\tilde B(\gamma^*)(Z^*)^{1/2}}{2}\right)\\
&= \tr\left(\frac{B(\gamma^*)^\intercal A(\gamma^*)^{-1} B(\gamma^*)}{2}\right).
\end{align*}
Thus, we have that
\begin{align*}
0 = \tr\left(\left(X^* + A(\gamma^*)^{-1} B(\gamma^*)\right)^\intercal A(\gamma^*)\left(X^* + A(\gamma^*)^{-1} B(\gamma^*)\right)\right),
\end{align*}
so that $X^* = -A(\gamma^*)^{-1}B(\gamma^*)$ by the positive definiteness of $A(\gamma^*)$.

Next, note that by the feasibility of $Y^*$, we have $q_i(X^*) = \ip{M_i, Y^*} + d_i = 0$ for all $i\in[m]$.
Similarly, by the optimality of $Y^*$, we have $q_\obj(X^*) = \ip{M_\obj, Y^*} = \Opt_\eqref{eq:primal_dual_SDP}$. In particular, $\max_{\gamma\in \cU} q(\gamma,X^*) = q(\gamma^*,X^*) = \Opt_\eqref{eq:primal_dual_SDP}$. On the other hand, for any $X\in\R^{W\times W^\perp}$,
\begin{align*}
\max_{\gamma\in \cU} q(\gamma,X) &\geq q(\gamma^*,X) = \Opt_\eqref{eq:primal_dual_SDP} + \tr\left(\frac{(X - X^*)^\intercal A(\gamma^*)(X - X^*)}{2}\right).
\end{align*}
As $A(\gamma^*)\succ 0$, we conclude that $X^*$ is the unique minimizer of \eqref{eq:strong_convex_reform} with optimal value $\Opt_\eqref{eq:strong_convex_reform} = \Opt_\eqref{eq:primal_dual_SDP}$.

Finally, strong convexity of \eqref{eq:strong_convex_reform} follows from compactness of $\cU$ and the assumption that $\cU$ certifies strict complementarity (so that $A(\gamma) = M(\gamma)^{}_W$ is positive definite over $\cU$).
\end{proof}

\begin{remark}

    One may compare \eqref{eq:strong_convex_reform} with the more natural Lagrangian formulation of \eqref{eq:primal_dual_SDP}, which results in a QMMP in the same space:
    \begin{align}
    \label{eq:partial_dual_SDP}
    \min_{X\in\R^{W\times W^\perp}} \sup_{\gamma\in\R^m:\, A(\gamma)\succeq 0} q(\gamma, X).
    \end{align}
    Indeed, it is possible to show that $X^*$ is also the unique minimizer of \eqref{eq:partial_dual_SDP}.
    Nevertheless, the formulation~\eqref{eq:partial_dual_SDP}, in contrast to \eqref{eq:strong_convex_reform}, has two major downsides: First, it may be the case that $\sup_{\gamma\in\R^m:\, A(\gamma)\succeq 0}q(\gamma,X)$ is a convex function in $X$ that is \emph{not strongly convex}.
    Second, the domain of the supremum, $\set{\gamma\in\R^m:\, A(\gamma)\succeq 0}$, is itself a spectrahedron so that \emph{even evaluating} $\sup_{\gamma\in\R^m:\, A(\gamma)\succeq 0}q(\gamma, X)$ (that is, evaluating \emph{zeroth-order} information in the $X$ variable) requires solving an SDP.
    In contrast, \eqref{eq:strong_convex_reform} is strongly convex by construction.
    Furthermore, we may pick $\cU$ to have efficient projection and linear maximization oracles (e.g., by taking $\cU$ to be an $\ell_2$ ball).
From this viewpoint, \eqref{eq:strong_convex_reform} will be much more amenable than \eqref{eq:partial_dual_SDP} to first-order methods.\ifjelse{\qed}{}
\end{remark}

\subsection{Comparison of \eqref{eq:strong_convex_reform} with the Burer--Monteiro approach}
\label{subsec:bm_comparison}

We show that the Burer--Monteiro approach~\cite{burer2003nonlinear} may be difficult to convexify meaningfully and that it may admit spurious second order critical points even on the class of $1$-exact QMP-like SDPs.

In the Burer-Monteiro approach, one replaces the matrix variable $Y\in\S^n_+$ with a rank-$k$ matrix variable parameterized by
\begin{align*}
Y = \begin{pmatrix}
	X\\
	X'
\end{pmatrix}\begin{pmatrix}
	X\\
	X'
\end{pmatrix}^\intercal
\end{align*}
where $X\in\R^{(n-k)\times k}$ and $X'\in\R^{k\times k}$.
This transformation replaces the $O(n^2)$-dimensional variable $Y\in\S^n_+$ with the $nk$-dimensional variable $(X;X')\in\R^{n\times k}$. We thus arrive at the following QMP:
    \begin{align}
        \label{eq:bm_homogeneous}
        \min_{\substack{X\in\R^{(n-k)\by k}\\X'\in\R^{k\by k}}}\set{\ip{M_\obj, \begin{pmatrix}
            X\\ X'
            \end{pmatrix}\begin{pmatrix}
                X\\ X'
                \end{pmatrix}^\intercal}:\, \begin{array}{r}
                    \ip{M_i, \begin{pmatrix}
                        X\\ X'
                        \end{pmatrix}\begin{pmatrix}
                            X\\ X'
                            \end{pmatrix}^\intercal} +d_i = 0,\hspace{2em}\,\\
                    \forall i\in[m]
                \end{array}}.
    \end{align}
    The abundant symmetries in formulation \eqref{eq:bm_homogeneous} is an obstacle towards its (meaningful) convexification. Specifically, given any optimal $(X;X')$, we have that $(XU;X' U)$ is also optimal for any $k\times k$ orthogonal matrix $U$. In particular, $0$ is always in the convex hull of the optimizers of \eqref{eq:bm_homogeneous}. Thus, any convex reformulation of \eqref{eq:bm_homogeneous} that preserves its optimizers must admit $0\in\R^{n\by k}$ as an optimal solution. Such a solution to the convex reformulation would provide no information on an actual solution $(X;X')$ to the original problem. The strongly convex reformulation \eqref{eq:strong_convex_reform} that we present is only possible because we take advantage of the additional information $X'(X')^\intercal = Z^*$ to break the rotational invariance by \emph{fixing} $X' = (Z^*)^{1/2}$.

Additionally, the Burer-Monteiro method may fail on the class of $1$-exact QMP-like SDPs even when the factors are allowed to have rank up to the Barvinok-Pataki threshold. 
    Consider an SDP of the form
\begin{align}
    \label{eq:maxcut}
    \max_{Y\in\S^n_+}\set{\ip{C,Y}:\, \begin{array}{l}
    Y_{i,i} = 1,\,\forall i\in[n]\\
    Y\succeq 0
    \end{array}}.
\end{align}
The projection of the feasible region of this SDP is known as the elliptope. 
Let $\Pi$ denote the projection operator from $\S^n$ to $\R^{\binom{n}{2}}$ mapping a symmetric matrix to its strict upper triangular entries and let $\Pi^{-1}(\bullet)$ denote the preimage of a given subset of $\R^{\binom{n}{2}}$.
For $\sigma\in\set{\pm 1}^n$, let $\cN_\sigma\subseteq\R^{\binom{n}{2}}$ denote the normal cone 
of the elliptope at the point $\Pi(\sigma\sigma^\intercal)$. It was shown in \cite{laurent1995positive} that $\cN_\sigma$ is a full-dimensional convex cone for every $\sigma\in\set{\pm1}^n$.
Moreover, it was recently shown in \cite{waldspurger2020rank} that for any $p$ satisfying $p(p+1)/2 + p \leq n$, there exists a set $\cC$ of positive Lebesgue measure contained in $\bigcup_{\sigma} \cN_\sigma$ such that for every $C\in\Pi^{-1}(\cC)$, the Burer--Monteiro factorization with rank $p$ for \eqref{eq:maxcut} admits spurious second-order critical points.
On the other hand, for every $C\in\cC$, the SDP \eqref{eq:maxcut} has a rank-$1$ optimal solution.
As $\cN_\sigma$ is convex, its boundary $\bd(\cN_\sigma)$ has measure zero.
Then $\cC' \coloneqq \cC\setminus \bigcup_\sigma \bd(\cN_\sigma)$ continues to have positive Lebesgue measure, whence $\Pi^{-1}(\cC')$ also has positive Lebesgue measure. We claim that for all $C\in\Pi^{-1}(\cC')$, \eqref{eq:maxcut} is an exact QMP-like SDP. It is known that strict complementarity holds for $C\in \Pi^{-1}\left(\bigcup_{\sigma}\inter(\cN_\sigma)\right)$~\cite{de2019strict}. It remains to exhibit a subspace $W$ of dimension $n-1$ such that $Y^*_{W^\perp}$ is known \emph{a priori} and positive definite. Taking $W$ to be the space spanned by the first $n-1$ coordinates, we have that $Y_{W^\perp} = Y_{n,n}=1$.

\section{Algorithms for strongly convex QMMPs}
\label{sec:qmmp}
In this section, we describe and analyze an accelerated first-order method (FOM) for solving strongly convex QMMPs. While we will apply this algorithm to problems arising from the application of \cref{thm:strong_convex_reform}, the algorithms from this section can handle \emph{general} strongly convex QMMPs or general strongly convex minimax problems given an inaccurate \emph{prox-map} oracle (see \cref{def:prox_map} below).

\begin{remark}
As in \cref{sec:strongly_convex_reform}, we will assume throughout these sections that $W$ corresponds to the first $(n-k)$ coordinate subspace of $\R^n$.
When going from the coordinate subspace setting to the general setting, we will need to make changes to the definitions of $A(\gamma)$ and $B(\gamma)$ as described in \cref{rem:coordinate_to_general_submatrix_to_restrictions}.
We will additionally need to make minor modifications to the algorithms presented below.
As we will see, the algorithms
in the coordinate subspace setting
only require access to $A(\gamma)$ and $B(\gamma)$ via the following operations: First, given $\gamma\in\R^m$ and $X\in\R^{(n-k)\by k}$, we need to be able to form $A(\gamma)X\in\R^{(n-k)\by k}$.
Second, given $\gamma\in\R^m$, we need to be able to evaluate $B(\gamma)\in \R^{(n-k)\by k}$.
To adapt these algorithms to the general setting, we will require access to an ordered orthonormal basis $\cB\in \R^{n\by k}$ of $W^\perp$ and the orthogonal projection operator onto $W$, denoted $\Pi_W$.
Then, the two operations above are replaced by
\begin{gather*}
    \gamma\in\R^m, X\in\R^{W\by k} \quad\mapsto\quad A(\gamma)X = 2\Pi_W(M(\gamma) X) \in \R^{W \by k}\\
    \gamma\in\R^m \quad\mapsto\quad B(\gamma)\cB = 2\Pi_W(M(\gamma)\cB) \in \R^{W\by k}.
\end{gather*}
Note that $\Pi_W$ can be applied as $I - \cB\cB^\intercal$.

To summarize: in the general setting, there is a storage overhead of $O(nk)$ for storing $\cB$. 
The storage cost for each $X\in\R^{W\by k}$ continues to be $O(nk)$. The computational cost of each of the operations involving $A(\gamma)$ or $B(\gamma)$ is the cost of applying $M(\gamma)$ to an $n\by k$ matrix, the cost of applying $\cB^\intercal$ to an $n\by k$ matrix, and the cost of applying $\cB$ to a $k\by k$ matrix.\ifjelse{\qed}{}
\end{remark}

We state explicitly the setup and assumptions of this section.
Let $q_\obj,q_1,\dots,q_m:\R^{(n-k)\times k}\to\R$ be quadratic matrix functions of the form
\begin{align*}
q_i(X) = \frac{\tr(X^\intercal A_i X)}{2} + \ip{B_i, X} + c_i.
\end{align*}
Given $\gamma\in\R^m$, let $A(\gamma)\coloneqq A_\obj + \sum_{i=1}^m \gamma_i A_i$. Define $B(\gamma),\, c(\gamma),\, q(\gamma,X)$ analogously.

Let $\cU\subseteq\R^m$ be a compact convex set with exact projection and linear maximization oracles.
Our goal is to find an $\epsilon$-optimal solution to
\begin{align}
\tag{QMMP}
\label{eq:qmmp}
\min_{X\in\R^{(n-k)\by k}} \max_{\gamma\in\cU} q(\gamma,X).
\end{align}
That is, our goal is to find some $\tilde X\in\R^{(n-k)\times k}$ satisfying $\max_{\gamma\in\cU}q(\gamma,\tilde X) \leq \Opt_{\eqref{eq:qmmp}} + \epsilon$.
For notational convenience, we will define 
\[Q(X)\coloneqq \max_{\gamma\in\cU} q(\gamma,X).\] 
While we will treat $\cU$ as fixed in this section, in future sections, we will explicitly call attention to the dependence of the function $Q$ on the set $\cU$ and write $Q_\cU$ instead.

We present a FOM for \eqref{eq:qmmp} under two assumptions.
Both assumptions require algorithmic access to different regularity parameters of \eqref{eq:qmmp}. We will later show how to construct parameters to satisfy these assumptions but will treat them as given in the present section. 
The first assumption (\cref{as:qmmp}) requires uniform strong convexity and smoothness of $q(\gamma,X)$ over $\cU$.
\begin{assumption}
	\label{as:qmmp}
	We will assume algorithmic access to parameters $0<\mu\leq L$ such that $\mu I \preceq A(\gamma)\preceq L I$ for all $\gamma\in\cU$. 
\end{assumption}
When \cref{as:qmmp} holds, we define the \emph{condition number} of \eqref{eq:qmmp} as $\kappa \coloneqq L/\mu$.
We will state our second assumption (which bounds the norms of various quantities) when needed in \cref{subsec:fom_explicit_oracle}.

Our FOM will closely follow Nesterov's accelerated gradient descent scheme for strongly convex minimax functions \cite[Algorithm 2.3.13]{nesterov2018lectures} (henceforth AGD-MM) with one major difference. In contrast to the presentation in \cite{nesterov2018lectures} and its application in \cite{wang2020generalized}, the necessary prox-map in the QMMP setting cannot be computed explicitly or exactly.

We break our FOM for strongly convex QMMPs into two levels, presented as the first two subsections in this section. In \cref{subsec:fom_assuming_oracle}, we give a convergence analysis for a modified version of AGD-MM using an \emph{inexact} prox-map oracle.
In particular, we will bound the necessary accuracy of the prox-map to recover accelerated convergence rates.
In \cref{subsec:approximating_prox_map}, we show how to implement the approximate prox-map oracle efficiently for each iteration using the strongly convex excessive gap technique~\cite[Algorithm 6.2.37]{nesterov2018lectures}.
Finally, in \cref{subsec:fom_explicit_oracle}, we state an assumption (\cref{as:regularity}) that allows us to bound the iteration cost of the prox-map oracle uniformly across iterations. Taken together with the results from the previous subsections, this will give a rigorous guarantee for the overall FOM.

\subsection{An FOM for strongly convex QMMPs using an inexact prox-map oracle}
\label{subsec:fom_assuming_oracle}
This subsection generalizes AGD-MM by allowing \emph{inexact} prox-map computations.
We first recall the definition of the prox-map and the fundamental relation \eqref{eq:combined_inequality_exact} that is used in the convergence rate analysis of AGD-MM. 
Next, we show how to recover a similar inequality \eqref{eq:combined_inequality_inexact} when the prox-map is computed only approximately.
Finally, we show how to modify the step-sizes in AGD-MM to prevent error accumulation that may otherwise build up from inexact prox-map computations. These step-sizes allow us to recover the accelerated linear convergence rates of AGD-MM even with inexact prox-map computations.
We defer the proofs of lemmas and corollaries in this subsection to \cref{sec:deferred} as many are straightforward or follow verbatim from the exact prox-map case.

\subsubsection{The prox-map}
AGD-MM requires computing the prox-map $X_L(\Xi)$ (defined in \cref{def:prox_map}) \emph{exactly} in every iteration (adapted from \cite[Definition 2.3.2]{nesterov2018lectures}).
\begin{definition}
\label{def:prox_map}
Let $\Xi\in\R^{(n-k)\times k}$. Define
\begin{align*}
Q(\Xi; X)&\coloneqq \max_{\gamma\in \cU}\left(q(\gamma,\Xi) + \ip{\grad_2 \, q(\gamma,\Xi), X - \Xi}\right)\\
Q_L(\Xi;X) & \coloneqq Q(\Xi; X) + \frac{L}{2}\norm{X - \Xi}_F^2\\
Q_L^*(\Xi) &\coloneqq \min_{X\in\R^{(n-k)\times k}} Q_L(\Xi; X)\\
X_L(\Xi) &\coloneqq \argmin_{X\in\R^{(n-k)\times k}} Q_L(\Xi;X)\\
g^{}_L(\Xi) &\coloneqq L(\Xi - X_L(\Xi)).
\end{align*}
Here, $\grad_2 \, q(\gamma,\Xi)$ is the gradient of $q(\gamma,X)$ in $X$ at $\Xi$ and is an affine function of $\gamma$ (more explicitly, $\grad_2\, q(\gamma,\Xi) = A(\gamma)\Xi + B(\gamma)$).
Note that the function $Q(\Xi;X)$ simply replaces the inside function $q(\gamma, X)$ in the definition of $Q(X)$ with its \emph{linearization} around $\Xi$. The quantities $X_L$ and $g^{}_L$ are the \emph{prox-map} and the \emph{grad-map}.
\ifjelse{\qed}{}
\end{definition}

Recall also the main property of the prox-map and grad-map that is used in the analysis of the convergence rate of AGD-MM as given in the following lemma (adapted from \cite[Theorem 2.3.2]{nesterov2018lectures}).

\begin{lemma}
Let $\Xi\in\R^{(n-k)\times k}$. Then, for all $X\in\R^{(n-k)\times k}$,
\begin{align}
\label{eq:combined_inequality_exact}
Q(X) \geq Q(X_L(\Xi)) + \frac{1}{2L} \norm{g^{}_L(\Xi)}_F^2 + \ip{g^{}_L(\Xi), X - \Xi} + \frac{\mu}{2}\norm{X - \Xi}_F^2.
\end{align}
\end{lemma}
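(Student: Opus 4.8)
The plan is to prove the inequality \eqref{eq:combined_inequality_exact} by combining three facts: the optimality condition for the prox-map subproblem, the strong convexity of $Q(X)$ itself (which comes from \cref{as:qmmp}), and the relation between the linearized max-function $Q(\Xi;X)$ and the true max-function $Q(X)$. First I would record the first-order optimality condition for $X_L(\Xi) = \argmin_X Q_L(\Xi;X)$. Since $Q_L(\Xi;X) = Q(\Xi;X) + \tfrac{L}{2}\norm{X-\Xi}_F^2$ is a strongly convex function of $X$ (a max of affine functions plus a quadratic), its minimizer $X_L(\Xi)$ satisfies $g^{}_L(\Xi) = L(\Xi - X_L(\Xi)) \in \partial_X Q(\Xi; X_L(\Xi))$, i.e. $-g^{}_L(\Xi)$ is a subgradient of $Q(\Xi;\cdot)$ at $X_L(\Xi)$. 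Moreover, by convexity of $Q(\Xi;\cdot)$ this gives, for every $X$,
\begin{align*}
Q(\Xi;X) \geq Q(\Xi; X_L(\Xi)) + \ip{-g^{}_L(\Xi),\, X - X_L(\Xi)}.
\end{align*}

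Next I would relate $Q(\Xi;\cdot)$ to $Q(\cdot)$. On one side, because each $q(\gamma,\cdot)$ is convex with Hessian $A(\gamma) \succeq \mu I$, its linearization around $\Xi$ underestimates it by the strong-convexity gap: $q(\gamma,X) \geq q(\gamma,\Xi) + \ip{\grad_2 q(\gamma,\Xi), X-\Xi} + \tfrac{\mu}{2}\norm{X-\Xi}_F^2$ for each $\gamma$, hence taking the max over $\gamma\in\cU$,
\begin{align*}
Q(X) \geq Q(\Xi;X) + \frac{\mu}{2}\norm{X-\Xi}_F^2.
\end{align*}
On the other side, evaluated at $X_L(\Xi)$ we need a \emph{lower} bound on $Q(X_L(\Xi))$ in terms of $Q(\Xi;X_L(\Xi))$; here I would instead bound $Q(\Xi;X_L(\Xi))$ from below by $Q(X_L(\Xi)) - \tfrac{L}{2}\norm{X_L(\Xi)-\Xi}_F^2$ using the $L$-smoothness direction (each $q(\gamma,\cdot)$ has Hessian $\preceq L I$, so its linearization overestimates it by at most the $L$-quadratic; taking max over $\gamma$ preserves this), which is exactly $Q(X_L(\Xi)) - \tfrac{1}{2L}\norm{g^{}_L(\Xi)}_F^2$ after substituting $\norm{X_L(\Xi)-\Xi}_F = \tfrac{1}{L}\norm{g^{}_L(\Xi)}_F$.

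Finally I would chain these: start from the strong-convexity bound $Q(X) \geq Q(\Xi;X) + \tfrac{\mu}{2}\norm{X-\Xi}_F^2$, apply the subgradient inequality for $Q(\Xi;\cdot)$ to lower-bound $Q(\Xi;X)$ by $Q(\Xi;X_L(\Xi)) + \ip{-g^{}_L(\Xi), X - X_L(\Xi)}$, then replace $Q(\Xi;X_L(\Xi))$ with its lower bound $Q(X_L(\Xi)) - \tfrac{1}{2L}\norm{g^{}_L(\Xi)}_F^2$. Rewriting the inner product using $X - X_L(\Xi) = (X-\Xi) + (\Xi - X_L(\Xi)) = (X-\Xi) + \tfrac{1}{L}g^{}_L(\Xi)$ produces $\ip{-g^{}_L(\Xi), X-\Xi} - \tfrac{1}{L}\norm{g^{}_L(\Xi)}_F^2$, and collecting the two $\tfrac{1}{L}\norm{g^{}_L(\Xi)}_F^2$-type terms leaves $+\tfrac{1}{2L}\norm{g^{}_L(\Xi)}_F^2 + \ip{g^{}_L(\Xi), \Xi - X}$; flipping the sign of the inner product term to match the stated form gives exactly \eqref{eq:combined_inequality_exact}. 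The main obstacle is bookkeeping the signs and the $\tfrac{1}{L}$ factors correctly when expanding the inner product across the three points $X$, $\Xi$, $X_L(\Xi)$, and making sure the max-over-$\gamma$ operation is applied in the right direction at each step (it preserves pointwise lower bounds on each $q(\gamma,\cdot)$, which is what we need for both the $\mu$- and $L$-side estimates); none of the individual inequalities is hard, but the assembly must be done carefully. This is essentially the proof of \cite[Theorem 2.3.2]{nesterov2018lectures} transcribed to the matrix/minimax setting.
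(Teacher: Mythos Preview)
The paper does not give its own proof of this lemma; it simply cites \cite[Theorem 2.3.2]{nesterov2018lectures}. Your approach is exactly that proof transcribed to the present matrix/minimax setting, as you yourself note, so there is nothing to compare at the level of strategy.

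There is, however, one sign slip to fix. You correctly derive $g_L(\Xi) \in \partial_X Q(\Xi; X_L(\Xi))$ from the first-order optimality condition, but then immediately say ``i.e.\ $-g_L(\Xi)$ is a subgradient'' and carry the wrong sign into the displayed subgradient inequality. With the correct sign the subgradient inequality reads
\[
Q(\Xi;X) \geq Q(\Xi;X_L(\Xi)) + \ip{g_L(\Xi),\, X - X_L(\Xi)},
\]
and expanding $X - X_L(\Xi) = (X-\Xi) + \tfrac{1}{L}g_L(\Xi)$ gives $\ip{g_L(\Xi), X-\Xi} + \tfrac{1}{L}\norm{g_L(\Xi)}_F^2$. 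Combined with the $-\tfrac{1}{2L}\norm{g_L(\Xi)}_F^2$ coming from the smoothness bound on $Q(\Xi;X_L(\Xi))$, this yields $+\tfrac{1}{2L}\norm{g_L(\Xi)}_F^2 + \ip{g_L(\Xi), X-\Xi}$ directly, with no ``sign flipping'' needed at the end. With your sign, the two quadratic-in-$g_L$ terms would instead sum to $-\tfrac{3}{2L}\norm{g_L(\Xi)}_F^2$ and the inner product would point the wrong way, so the chain does not close. The structure of the argument is correct; only this one sign needs repair.
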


\subsubsection{An approximate prox-map inequality}
In the setting of general QMMPs, it is not possible to compute the prox-map exactly.
Instead, we will apply an inner FOM to solve the prox-map $X_L(\Xi)$ to some prescribed accuracy.
This necessitates an analysis of (a variant of) AGD-MM that works with inexact prox-map computations.
To this end, we show how to recover a version of \eqref{eq:combined_inequality_exact} where $X_L(\Xi)$ is computed only approximately.

Define
\begin{align}
\label{eq:tilde_L_mu_kappa}
\tilde \mu &\coloneqq \mu/2,\qquad
\tilde L \coloneqq L - \mu/2,\quad\text{and}\quad
\tilde \kappa \coloneqq \tilde L/\tilde \mu.
\end{align}

The following geometric fact (\cref{lem:quadratic_moving_center}) will allow us to derive a version of a variant of \eqref{eq:combined_inequality_exact} which only uses an approximate prox-map (\cref{thm:combined_inequality_eps}).
\begin{lemma}
\label{lem:quadratic_moving_center}
Let $\tilde X,\,X_L\in\R^{(n-k)\times k}$ be such that $\norm{\tilde X - X_L}_F \leq \delta$. Then, for all $X\in\R^{(n-k)\times k}$,
\begin{align*}
\frac{L}{2}\norm{X - X_L}_F^2 \geq \frac{\tilde L}{2}\norm{X - \tilde X}_F^2 - \frac{L\delta^2}{2}\left(2\kappa - 1\right).
\end{align*}
\end{lemma}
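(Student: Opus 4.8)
The plan is to reduce the claim to a one-variable quadratic estimate. Introduce $u \coloneqq X - \tilde X$ and $v \coloneqq \tilde X - X_L$, so that $X - X_L = u + v$ and $\norm{v}_F \le \delta$ by hypothesis. Expanding,
\[
\frac{L}{2}\norm{X - X_L}_F^2 = \frac{L}{2}\norm{u}_F^2 + L\ip{u,v} + \frac{L}{2}\norm{v}_F^2 .
\]
Using $L - \tilde L = \mu/2$ (from \eqref{eq:tilde_L_mu_kappa}), the desired inequality is then equivalent to
\[
\frac{\mu}{4}\norm{u}_F^2 + L\ip{u,v} + \frac{L}{2}\norm{v}_F^2 + \frac{L\delta^2}{2}(2\kappa - 1) \ge 0 .
\]

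The second step is to control the cross term. I would apply Cauchy--Schwarz and then Young's inequality with the weight tuned so that the $\norm{u}_F^2$ term is cancelled exactly:
\[
L\ip{u,v} \ge -L\norm{u}_F\norm{v}_F \ge -\frac{\mu}{4}\norm{u}_F^2 - \frac{L^2}{\mu}\norm{v}_F^2 .
\]
Substituting this bound into the previous display, the $\norm{u}_F^2$ contributions vanish and the left-hand side is at least $\left(\frac{L}{2} - \frac{L^2}{\mu}\right)\norm{v}_F^2 + \frac{L\delta^2}{2}(2\kappa - 1)$. Since $\kappa = L/\mu \ge 1$, the coefficient $\frac{L}{2} - \frac{L^2}{\mu} = \frac{L}{2}(1 - 2\kappa)$ is nonpositive, so the final step is to use $\norm{v}_F^2 \le \delta^2$ (which, the coefficient being nonpositive, only decreases the expression) to conclude that the left-hand side is at least $\frac{L}{2}(1 - 2\kappa)\delta^2 + \frac{L\delta^2}{2}(2\kappa - 1) = 0$, as required.

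I do not expect a genuine obstacle; this is essentially a \emph{moving the prox-center} perturbation bound. The only points that need care are (i) choosing the Young's-inequality weight precisely so that the strong-convexity-sized $\norm{u}_F^2$ term cancels --- any other choice would leave a spurious $\norm{X - \tilde X}_F^2$ term and a weaker constant --- and (ii) correctly tracking the sign when replacing $\norm{v}_F^2$ by $\delta^2$ inside a term carrying a negative coefficient.
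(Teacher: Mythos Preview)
Your proof is correct and is essentially the same argument as the paper's. The only cosmetic difference is that the paper encodes the Cauchy--Schwarz/Young step as the nonnegativity of a single square, $\tfrac{L}{2}\norm{\sqrt{\tilde\mu/L}\,(X-\tilde X)+\sqrt{L/\tilde\mu}\,\Delta}_F^2\ge 0$, which expands to exactly your bound (recall $\tilde\mu=\mu/2$, so $\tilde\mu/2=\mu/4$ and $L^2/\mu=L\kappa$).
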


\begin{theorem}
\label{thm:combined_inequality_eps}
Let $\Xi\in\R^{(n-k)\times k}$. Suppose $\tilde X$ satisfies
\begin{align*}
Q_L(\Xi; \tilde X) \leq Q^*_L(\Xi) + \epsilon.
\end{align*}
Set $\tilde g \coloneqq \tilde L (\Xi - \tilde X)$. Then, for all $X\in\R^{(n-k)\times k}$,
\begin{align}
\label{eq:combined_inequality_inexact}
Q(X) &\geq Q(\tilde X) + \frac{1}{2\tilde L} \norm{\tilde g}_F^2 + \ip{\tilde g, X - \Xi} + \frac{\tilde \mu}{2}\norm{X - \Xi}_F^2 - 2\kappa\epsilon.
\end{align}
\end{theorem}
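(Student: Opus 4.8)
The plan is to reduce \eqref{eq:combined_inequality_inexact} to the ``moving-center'' estimate of \cref{lem:quadratic_moving_center}: I will use strong convexity of the model $Q_L(\Xi;\cdot)$ to bound how far the approximate minimizer $\tilde X$ lies from the exact prox-point $X_L(\Xi)$, and then absorb every error term into the $-2\kappa\epsilon$ slack. First I would record two one-sided comparisons between $Q$ and the model $Q_L(\Xi;\cdot)$, both immediate from \cref{as:qmmp}. From uniform $\mu$-strong convexity of each $q(\gamma,\cdot)$ followed by a maximization over $\gamma\in\cU$, one gets $Q(X)\ge Q(\Xi;X)+\tfrac{\mu}{2}\norm{X-\Xi}_F^2$ for all $X$; since $Q_L(\Xi;X)=Q(\Xi;X)+\tfrac{L}{2}\norm{X-\Xi}_F^2$ this rearranges to $Q(X)\ge Q_L(\Xi;X)-\tfrac{L-\mu}{2}\norm{X-\Xi}_F^2$. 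From uniform $L$-smoothness of each $q(\gamma,\cdot)$, together with the fact that every $\gamma$-linearization at $\Xi$ lies pointwise below $Q(\Xi;\cdot)$, one gets the opposite comparison $Q(Y)\le Q_L(\Xi;Y)$ for all $Y$; in particular $Q(\tilde X)\le Q_L(\Xi;\tilde X)\le Q_L^*(\Xi)+\epsilon$.

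Next I would use that $Q_L(\Xi;\cdot)$ is $L$-strongly convex with minimizer $X_L(\Xi)$ and minimum value $Q_L^*(\Xi)$, which gives both $Q_L(\Xi;X)\ge Q_L^*(\Xi)+\tfrac{L}{2}\norm{X-X_L(\Xi)}_F^2$ for all $X$ and, at $X=\tilde X$, the distance bound $\norm{\tilde X-X_L(\Xi)}_F^2\le 2\epsilon/L=:\delta^2$. Feeding the strong-convexity bound into the first comparison yields $Q(X)\ge Q_L^*(\Xi)+\tfrac{L}{2}\norm{X-X_L(\Xi)}_F^2-\tfrac{L-\mu}{2}\norm{X-\Xi}_F^2$. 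Now I invoke \cref{lem:quadratic_moving_center} with this $\delta$ to trade $\tfrac{L}{2}\norm{X-X_L(\Xi)}_F^2$ for $\tfrac{\tilde L}{2}\norm{X-\tilde X}_F^2$ at a cost of $\tfrac{L\delta^2}{2}(2\kappa-1)=(2\kappa-1)\epsilon$, and I replace $Q_L^*(\Xi)$ by $Q(\tilde X)-\epsilon$ using the second comparison; the error contributions $\epsilon$ and $(2\kappa-1)\epsilon$ add up to exactly $2\kappa\epsilon$, leaving $Q(X)\ge Q(\tilde X)+\tfrac{\tilde L}{2}\norm{X-\tilde X}_F^2-\tfrac{L-\mu}{2}\norm{X-\Xi}_F^2-2\kappa\epsilon$.

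Finally I would expand $\tfrac{\tilde L}{2}\norm{X-\tilde X}_F^2$ by writing $X-\tilde X=(X-\Xi)+(\Xi-\tilde X)$ and substituting $\Xi-\tilde X=\tilde g/\tilde L$, obtaining $\tfrac{\tilde L}{2}\norm{X-\Xi}_F^2+\ip{\tilde g,X-\Xi}+\tfrac{1}{2\tilde L}\norm{\tilde g}_F^2$; the arithmetic identity $\tilde L-(L-\mu)=\mu/2=\tilde\mu$ --- which is exactly why $\tilde\mu,\tilde L$ are chosen as in \eqref{eq:tilde_L_mu_kappa} --- collapses the residual quadratic in $X-\Xi$ to $\tfrac{\tilde\mu}{2}\norm{X-\Xi}_F^2$, giving \eqref{eq:combined_inequality_inexact}. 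There is no single hard step; the real content is the moving-center maneuver of \cref{lem:quadratic_moving_center}, which buys the right to center the quadratic lower bound at the computable point $\tilde X$ rather than the inaccessible $X_L(\Xi)$ by shaving the curvature from $L$ down to $\tilde L$, and the only thing to watch is the bookkeeping of the two $\epsilon$-terms and the constant arithmetic.
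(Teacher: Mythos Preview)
Your proof is correct and follows essentially the same route as the paper: bound $\norm{\tilde X-X_L(\Xi)}_F$ via $L$-strong convexity of $Q_L(\Xi;\cdot)$, sandwich $Q$ between its linearized model plus/minus a quadratic using \cref{as:qmmp}, invoke \cref{lem:quadratic_moving_center} to recenter the quadratic at $\tilde X$, and expand. The only cosmetic difference is that the paper splits $-\tfrac{L-\mu}{2}\norm{X-\Xi}_F^2$ as $-\tfrac{\tilde L}{2}\norm{X-\Xi}_F^2+\tfrac{\tilde\mu}{2}\norm{X-\Xi}_F^2$ immediately, whereas you carry $-\tfrac{L-\mu}{2}$ to the end and then use $\tilde L-(L-\mu)=\tilde\mu$; the arithmetic is identical.
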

\begin{proof}
As $Q_L(\Xi; X)$ is $L$-strongly convex, from the premise of the lemma we have $\norm{\tilde X - X_L(\Xi)}_F\leq \sqrt{2\epsilon/L}$.

We bound
\begin{align*}
Q(X) &\geq Q(\Xi; X) + \frac{\mu}{2}\norm{X - \Xi}_F^2\\
&= Q_L(\Xi; X) - \frac{\tilde L}{2}\norm{X - \Xi}_F^2  + \frac{\tilde \mu}{2}\norm{X - \Xi}_F^2\\
&\geq Q^*_L(\Xi) + \frac{L}{2}\norm{X - X_L(\Xi)}_F^2  - \frac{\tilde L}{2}\norm{X - \Xi}_F^2  + \frac{\tilde \mu}{2}\norm{X - \Xi}_F^2\\
&\geq Q(\tilde X) + \frac{\tilde L}{2}\norm{X - \tilde X}_F^2  - \frac{\tilde L}{2}\norm{X - \Xi}_F^2  + \frac{\tilde \mu}{2}\norm{X - \Xi}_F^2 - 2\kappa\epsilon\\
&= Q(\tilde X) + \frac{\tilde L}{2}\left(2\ip{X - \Xi, \Xi - \tilde X} + \norm{\Xi - \tilde X}_F^2\right)  + \frac{\tilde \mu}{2}\norm{X - \Xi}_F^2 - 2\kappa\epsilon\\
&= Q(\tilde X) + \ip{\tilde g, X - \Xi} + \frac{1}{2\tilde L} \norm{\tilde g}_F^2 + \frac{\tilde \mu}{2}\norm{X - \Xi}_F^2 - 2\kappa\epsilon.
\end{align*}
Here, the first inequality follows from $\mu$-strong convexity of $Q$, the first equation follows from the definitions of $Q_L(\Xi;X)$, $\tilde L$ and $\tilde \mu$, 
the second inequality follows from optimality of $X_L(\Xi)$, the third inequality follows from \cref{lem:quadratic_moving_center} applied with $\delta = \sqrt{2\epsilon/L}$ and the $L$-smoothness of $q(\gamma,X)$ for each $\gamma\in\cU$, and the last two equations follow from expanding the squares and the definition of $\tilde g$.
\end{proof}

\subsubsection{Estimating sequences}
We now modify the estimating sequences analysis of AGD-MM to use \eqref{eq:combined_inequality_inexact} instead of \eqref{eq:combined_inequality_exact}:
Fix $X_0\in\R^{(n-k)\times k}$ and let $\set{\epsilon_t}\subseteq\R_{++}$ and $\set{\Xi_t}\subseteq \R^{(n-k)\times k}$ to be fixed later. Define
\begin{align*}
\phi_0(X) &\coloneqq Q(X_0) + \frac{\tilde \mu}{2}\norm{X - X_0}_F^2.
\end{align*}
For $t \geq 0$, let $X_{t+1}$ be an $\epsilon$-approximate prox-map, i.e., $X_{t+1}$ satisfies
\begin{align*}
Q^{}_L(\Xi_t; X_{t+1}) \leq Q^*_L(\Xi_t) + \epsilon_t,
\end{align*}
and set $\tilde g_t \coloneqq \tilde L(\Xi_t - X_{t+1})$.
Let $\alpha \coloneqq \tilde\kappa^{-1/2}$ and recursively define
\begin{align*}
\phi_{t+1}(X) &\coloneqq (1-\alpha)\phi_t(X) + \alpha \left( Q(X_{t+1}) + \frac{1}{2\tilde L}\norm{\tilde g_t}_F^2 + \ip{\tilde g_t, X - \Xi_t} + \frac{\tilde \mu}{2}\norm{X - \Xi_t}_F^2 \right).
\end{align*}

The following lemma shows how $\phi_t(X)$ evolves. Its proof follows verbatim from the standard proof~\cite[Lemma 2.3.3]{nesterov2018lectures}, which makes no assumption on how $\Xi_t$ and $X_t$ are related.

\begin{lemma}
\label{lem:estimating_sequence_recurrence}
For all $t\geq 0$, $\phi_t(X)$ is a quadratic matrix function in $X$ of the form
\begin{align}
\label{eq:estimating_sequence_form}
\phi_t(X) = \phi_t^* + \frac{\tilde \mu}{2}\norm{X - V_t}_F^2.
\end{align}
The sequences $\set{\phi_t^*}$, $\set{V_t}$
are given by $V_0 = X_0$, $\phi_0^* = Q(X_0)$ and the recurrences
\begin{align*}
V_{t+1} &= (1-\alpha) V_t + \alpha \left(\Xi_t - \frac{1}{\tilde \mu}\tilde g_t\right),\text{ and}\\
\phi_{t+1}^*&= (1-\alpha)\phi_t^* + \alpha\left(Q(X_{t+1}) + \frac{1}{2\tilde L}\norm{\tilde g_t}_F^2\right) -\frac{\alpha^2}
{2\tilde \mu}\norm{\tilde g_t}_F^2\\
&\qquad 
+ \alpha(1-\alpha)\left(\frac{\tilde \mu}{2}\norm{\Xi_t - V_t}_F^2 + \ip{\tilde g_t, V_t - \Xi_t}\right).
\end{align*}
\end{lemma}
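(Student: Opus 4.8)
The plan is to verify the claimed closed form \eqref{eq:estimating_sequence_form} by induction on $t$, following the standard estimating-sequences argument of \cite[Lemma 2.3.3]{nesterov2018lectures} essentially verbatim; as the excerpt notes, the argument never uses any relationship between $\Xi_t$ and $X_t$, so the inexactness of the prox-map (which only affects the \emph{content} of the linear lower bound plugged into the recursion, not its \emph{form}) is irrelevant to this lemma.

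For the base case $t=0$, the definition $\phi_0(X) = Q(X_0) + \tfrac{\tilde\mu}{2}\norm{X - X_0}_F^2$ is already of the form \eqref{eq:estimating_sequence_form} with $V_0 = X_0$ and $\phi_0^* = Q(X_0)$. For the inductive step, I would substitute the inductive hypothesis $\phi_t(X) = \phi_t^* + \tfrac{\tilde\mu}{2}\norm{X - V_t}_F^2$ into the recursion defining $\phi_{t+1}$, obtaining
\begin{align*}
\phi_{t+1}(X) &= (1-\alpha)\phi_t^* + (1-\alpha)\frac{\tilde\mu}{2}\norm{X - V_t}_F^2 \\
&\qquad + \alpha\left(Q(X_{t+1}) + \frac{1}{2\tilde L}\norm{\tilde g_t}_F^2 + \ip{\tilde g_t, X - \Xi_t} + \frac{\tilde\mu}{2}\norm{X - \Xi_t}_F^2\right).
\end{align*}
The coefficient of the pure quadratic term $\tfrac{\tilde\mu}{2}\norm{X}_F^2$ is $(1-\alpha) + \alpha = 1$, confirming that $\phi_{t+1}$ is again of the form $\phi_{t+1}^* + \tfrac{\tilde\mu}{2}\norm{X - V_{t+1}}_F^2$ for some $V_{t+1}$ and $\phi_{t+1}^*$; the key point is that the quadratic coefficient is preserved exactly. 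I would then identify $V_{t+1}$ by matching the linear-in-$X$ terms: collecting $\tfrac{\partial}{\partial X}$ at, say, $X=0$ (equivalently completing the square) gives $\tilde\mu V_{t+1} = (1-\alpha)\tilde\mu V_t + \alpha\tilde\mu\Xi_t - \alpha\tilde g_t$, i.e. $V_{t+1} = (1-\alpha)V_t + \alpha(\Xi_t - \tfrac{1}{\tilde\mu}\tilde g_t)$, as claimed.

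Finally, to extract $\phi_{t+1}^*$, I would evaluate both sides of the expression above at $X = V_{t+1}$ (so the left side equals $\phi_{t+1}^*$) or, equivalently, compare constant terms after completing the square; this is the only genuinely computational part. Writing $X - \Xi_t = (X - V_t) + (V_t - \Xi_t)$ and using the expansion of $\norm{(1-\alpha)(X-V_t) + \alpha(X - \Xi_t) + \tfrac{\alpha}{\tilde\mu}\tilde g_t - \text{(center)}}_F^2$, one collects the terms independent of $X$; the $\alpha^2$ and $\alpha(1-\alpha)$ cross-terms in $\norm{\Xi_t - V_t}_F^2$, $\ip{\tilde g_t, V_t - \Xi_t}$, and $\norm{\tilde g_t}_F^2$ reproduce exactly the stated recurrence for $\phi_{t+1}^*$. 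I do not anticipate a real obstacle here — the only thing to be careful about is the bookkeeping of the three combined quadratic forms $\tfrac{\tilde\mu}{2}\norm{\cdot-V_t}^2$, $\tfrac{\tilde\mu}{2}\norm{\cdot-\Xi_t}^2$, and the linear term $\ip{\tilde g_t,\cdot}$ when completing the square, which is the standard ``sum of strongly convex quadratics is a strongly convex quadratic'' computation and is identical to the one in \cite[Lemma 2.3.3]{nesterov2018lectures}. Since the excerpt explicitly defers the proof to the appendix and states it follows verbatim from the standard reference, I would simply carry out this matching-coefficients computation in the deferred section.
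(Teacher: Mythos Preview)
Your proposal is correct and follows essentially the same approach as the paper: induction on $t$, with $V_{t+1}$ read off from the linear-in-$X$ terms (the paper phrases this as setting $\grad\phi_{t+1}(X)=0$) and $\phi_{t+1}^*$ obtained by evaluating $\phi_{t+1}$ at $V_{t+1}$ and simplifying using the convex-combination form of $V_{t+1}$. The bookkeeping you anticipate is exactly the computation the paper carries out in the appendix.
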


For all $t \geq 0$, we will henceforth set 
\begin{align*}
\Xi_t \coloneqq \frac{X_t + \alpha V_t}{1+\alpha}.
\end{align*}
The following lemma shows that $\Xi_{t+1}$ can be written as an \emph{extragradient} step from $X_t$ towards $X_{t+1}$. Its proof follows verbatim from the standard proof~\cite[Page 92]{nesterov2018lectures}, which only requires the relation $X_{t+1} = \Xi_t - \tilde g_t/\tilde L$.
\begin{lemma}
\label{lem:Xi_extragradient}
It holds that $\Xi_0 = X_0$ and $\Xi_{t+1} = X_{t+1} + \frac{1-\alpha}{1+\alpha}(X_{t+1} - X_t)$ for all $t\geq 0$.
\end{lemma}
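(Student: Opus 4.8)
The plan is a direct algebraic verification from the definitions; no induction is really needed, since the relation $\Xi_t = (X_t + \alpha V_t)/(1+\alpha)$ has been declared to hold for \emph{every} $t$. The base case $\Xi_0 = X_0$ is immediate: plugging $V_0 = X_0$ into the defining relation gives $\Xi_0 = (X_0 + \alpha X_0)/(1+\alpha) = X_0$.

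For the second claim I would start from $\Xi_{t+1} = (X_{t+1} + \alpha V_{t+1})/(1+\alpha)$ and compute $\alpha V_{t+1}$ using the recurrence $V_{t+1} = (1-\alpha)V_t + \alpha(\Xi_t - \tilde g_t/\tilde\mu)$ from \cref{lem:estimating_sequence_recurrence}. Two substitutions make everything collapse. First, inverting the defining relation for $\Xi_t$ gives $\alpha V_t = (1+\alpha)\Xi_t - X_t$, hence $\alpha(1-\alpha)V_t = (1-\alpha^2)\Xi_t - (1-\alpha)X_t$. Second, by the very definition $\tilde g_t = \tilde L(\Xi_t - X_{t+1})$ we have $\tilde g_t/\tilde L = \Xi_t - X_{t+1}$, and since $\alpha = \tilde\kappa^{-1/2}$ gives $\tilde L/\tilde\mu = \tilde\kappa = \alpha^{-2}$, it follows that $\tilde g_t/\tilde\mu = \alpha^{-2}(\Xi_t - X_{t+1})$, so $\alpha^2(\Xi_t - \tilde g_t/\tilde\mu) = (\alpha^2 - 1)\Xi_t + X_{t+1}$. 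Adding the two pieces, the $\Xi_t$ terms cancel exactly, $(1-\alpha^2) + (\alpha^2 - 1) = 0$, leaving $\alpha V_{t+1} = X_{t+1} - (1-\alpha)X_t$. Substituting this back yields $\Xi_{t+1} = (2X_{t+1} - (1-\alpha)X_t)/(1+\alpha)$, which rearranges in one line to $X_{t+1} + \tfrac{1-\alpha}{1+\alpha}(X_{t+1}-X_t)$, as claimed.

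There is no conceptual obstacle here; the only thing worth being careful about is the cancellation of the $\Xi_t$ coefficient, and the only substantive point (already flagged in the text) is that this is literally the computation from \cite[Page 92]{nesterov2018lectures}, which uses no relationship between $\Xi_t$ and $X_t$ other than $X_{t+1} = \Xi_t - \tilde g_t/\tilde L$ — and that relation continues to hold verbatim in the inexact setting because we \emph{define} $\tilde g_t = \tilde L(\Xi_t - X_{t+1})$ with $X_{t+1}$ the approximate prox-map. So the standard proof transfers without change.
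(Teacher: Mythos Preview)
Your proposal is correct and takes essentially the same approach as the paper's proof: both use the three relations $\Xi_t = (X_t+\alpha V_t)/(1+\alpha)$, $X_{t+1} = \Xi_t - \tilde g_t/\tilde L$, and the recurrence for $V_{t+1}$, together with $\alpha^2 = \tilde\mu/\tilde L$, to eliminate $V_t$ and $\tilde g_t$ and arrive at the extragradient formula. The only cosmetic difference is that the paper first simplifies $V_{t+1}$ to $X_t + \tfrac{1}{\alpha}(X_{t+1}-X_t)$ and then plugs into $\Xi_{t+1}$, whereas you compute $\alpha V_{t+1}$ directly and substitute; the algebra is identical.
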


The following two lemmas bound the two types of errors that result from inexact prox-map computations. Define $E_0^{(1)} \coloneqq 0$, $E_0^{(2)}\coloneqq 0$, and for all $t\geq 0$ inductively set
\begin{align*}
E_{t+1}^{(1)} \coloneqq (1-\alpha)E_t^{(1)} + (1-\alpha)\epsilon_t
\qquad\text{and}\qquad
E_{t+1}^{(2)} \coloneqq (1-\alpha) E_t^{(2)} + \alpha \epsilon_t.
\end{align*}
Let $E_t \coloneqq E^{(1)}_t + E^{(2)}_t$ be the sum of the two types of errors. Equivalently, let $E_t \coloneqq 0$ and inductively set $E_{t+1} = (1-\alpha)E_t + \epsilon_t$ for all $t\geq 0$. Then, it holds that:

\begin{lemma}
\label{lem:error_inexact_prox_1}
It holds that $Q(X_t) \leq \phi_t^* + 2\kappa E^{(1)}_t$ for all $t \geq 0$.
\end{lemma}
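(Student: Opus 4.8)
$Q(X_t) \leq \phi_t^* + 2\kappa E_t^{(1)}$ for all $t \geq 0$.

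The plan is to induct on $t$, tracking how the prox-map error $\epsilon_t$ enters the estimating-sequence recursion. The base case $t=0$ is immediate: $\phi_0^* = Q(X_0)$ and $E_0^{(1)} = 0$ by definition, so $Q(X_0) = \phi_0^* + 2\kappa E_0^{(1)}$ holds with equality.

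For the inductive step, assume $Q(X_t) \le \phi_t^* + 2\kappa E_t^{(1)}$. Starting from the expression for $\phi_{t+1}^*$ in \cref{lem:estimating_sequence_recurrence}, I would bound the term $(1-\alpha)\phi_t^*$ from below using the inductive hypothesis, and then invoke the approximate prox-map inequality \eqref{eq:combined_inequality_inexact} of \cref{thm:combined_inequality_eps} at the single point $X = X_t$ (with $\Xi = \Xi_t$, $\tilde X = X_{t+1}$, $\tilde g = \tilde g_t$), which yields
\begin{align*}
Q(X_t) \ge Q(X_{t+1}) + \frac{1}{2\tilde L}\norm{\tilde g_t}_F^2 + \ip{\tilde g_t, X_t - \Xi_t} + \frac{\tilde\mu}{2}\norm{X_t - \Xi_t}_F^2 - 2\kappa\epsilon_t.
\end{align*}
Combining these two bounds, the error terms collect into $-2\kappa\bigl[(1-\alpha)E_t^{(1)} + (1-\alpha)\epsilon_t\bigr] = -2\kappa E_{t+1}^{(1)}$ by the definition of the $E^{(1)}$ recursion, while the remaining (error-free) terms are exactly those appearing in the standard estimating-sequences proof. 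Using the identity $\alpha^2 = \tilde\kappa^{-1} = \tilde\mu/\tilde L$, the $\norm{\tilde g_t}_F^2$ coefficients cancel, and substituting the choice $\Xi_t = (X_t + \alpha V_t)/(1+\alpha)$ (equivalently, \cref{lem:Xi_extragradient}) makes the inner-product terms cancel as well, leaving only $\alpha(1-\alpha)\tfrac{\tilde\mu}{2}\norm{\Xi_t - V_t}_F^2 + (1-\alpha)\tfrac{\tilde\mu}{2}\norm{X_t - \Xi_t}_F^2 \ge 0$; hence $\phi_{t+1}^* \ge Q(X_{t+1}) - 2\kappa E_{t+1}^{(1)}$, completing the induction.

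Since the inexact inequality \eqref{eq:combined_inequality_inexact} has the same structure as the exact one \eqref{eq:combined_inequality_exact} up to the parameter substitution $(\mu,L)\mapsto(\tilde\mu,\tilde L)$ and the additive term $-2\kappa\epsilon_t$, none of the algebra above is genuinely new; the one point that requires care is the bookkeeping of the error weight. The error $-2\kappa\epsilon_t$ must appear multiplied by $(1-\alpha)$ (and not by $1$ or by $\alpha$) in order to match the $E^{(1)}$ recursion and thereby ensure the accumulated error decays geometrically; this is guaranteed precisely because \eqref{eq:combined_inequality_inexact} is used only at $X = X_t$, and $X_t$ enters $\phi_{t+1}^*$ solely through the $(1-\alpha)\phi_t^*$ summand — the quantity $Q(X_{t+1})$ that we are trying to control already appears directly in the recursion of \cref{lem:estimating_sequence_recurrence}, so no second application of the inexact inequality is needed.
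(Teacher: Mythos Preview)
Your proposal is correct and follows essentially the same approach as the paper: induct on $t$, use the recursion of \cref{lem:estimating_sequence_recurrence} together with the inductive hypothesis on $\phi_t^*$, apply \eqref{eq:combined_inequality_inexact} at $X=X_t$, and then use $\alpha^2=\tilde\mu/\tilde L$ and $\Xi_t=(X_t+\alpha V_t)/(1+\alpha)$ to eliminate the remaining terms. The only cosmetic difference is that the paper drops the two nonnegative squared-norm terms along the way rather than carrying them to the end and bounding their sum by zero.
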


\begin{lemma}
\label{lem:error_inexact_prox_2}
For all $t\geq 0$, it holds that
\begin{align*}
\phi_t(X) \leq (1 - (1-\alpha)^t)Q(X) + (1-\alpha)^t \phi_0(X) + 2\kappa E_t^{(2)},\quad\forall X\in\R^{(n-k)\times k}.
\end{align*}
\end{lemma}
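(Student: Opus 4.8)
The plan is a straightforward induction on $t$, following the standard estimating-sequences bookkeeping of \cite{nesterov2018lectures} but with the inexact prox-map inequality \eqref{eq:combined_inequality_inexact} substituted for the exact one \eqref{eq:combined_inequality_exact}. The base case $t=0$ is immediate: since $E_0^{(2)} = 0$, the claimed bound reduces to $\phi_0(X) \le 0\cdot Q(X) + 1\cdot\phi_0(X)$, which holds with equality.

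For the inductive step, I would start from the defining recursion for $\phi_{t+1}$,
\[
\phi_{t+1}(X) = (1-\alpha)\phi_t(X) + \alpha\left(Q(X_{t+1}) + \tfrac{1}{2\tilde L}\norm{\tilde g_t}_F^2 + \ip{\tilde g_t,\, X-\Xi_t} + \tfrac{\tilde\mu}{2}\norm{X-\Xi_t}_F^2\right),
\]
and apply \cref{thm:combined_inequality_eps} with $\Xi = \Xi_t$, $\tilde X = X_{t+1}$, and $\epsilon = \epsilon_t$; this is legitimate because $X_{t+1}$ satisfies $Q_L(\Xi_t; X_{t+1}) \le Q_L^*(\Xi_t) + \epsilon_t$ and $\tilde g_t = \tilde L(\Xi_t - X_{t+1})$ by construction. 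That inequality says precisely that the parenthesized expression is at most $Q(X) + 2\kappa\epsilon_t$, so
\[
\phi_{t+1}(X) \le (1-\alpha)\phi_t(X) + \alpha\big(Q(X) + 2\kappa\epsilon_t\big).
\]
Now invoke the inductive hypothesis on $\phi_t(X)$ — valid since $\alpha = \tilde\kappa^{-1/2} \in (0,1]$, so $1-\alpha \ge 0$ and multiplying through preserves the inequality — and regroup the three types of terms. The $Q(X)$ coefficient telescopes, $(1-\alpha)\big(1-(1-\alpha)^t\big) + \alpha = 1 - (1-\alpha)^{t+1}$; the $\phi_0(X)$ coefficient becomes $(1-\alpha)^{t+1}$; and the error terms combine as $2\kappa\big((1-\alpha)E_t^{(2)} + \alpha\epsilon_t\big) = 2\kappa E_{t+1}^{(2)}$ by the definition of $E_{t+1}^{(2)}$. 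This is exactly the claimed bound at index $t+1$, completing the induction.

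There is no genuine obstacle here: once \cref{thm:combined_inequality_eps} is available, the lemma is pure bookkeeping. The only points deserving (minor) care are that $1-\alpha$ is nonnegative, so the inductive hypothesis may be scaled without flipping the inequality, and that the accumulated-error recursion is matched term-for-term — note in particular that it is the $\alpha$-weighted prox-map error $\alpha\epsilon_t$ (feeding $E^{(2)}$), rather than the $(1-\alpha)$-weighted contribution that feeds $E^{(1)}$ in \cref{lem:error_inexact_prox_1}, that surfaces in this estimate.
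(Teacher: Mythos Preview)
Your proposal is correct and follows essentially the same approach as the paper's proof: induction on $t$, bounding the parenthesized term in the recursion for $\phi_{t+1}$ by $Q(X)+2\kappa\epsilon_t$ via \cref{thm:combined_inequality_eps}, then invoking the inductive hypothesis and regrouping so that the error recursion collapses to $E_{t+1}^{(2)}$. The paper's write-up is more terse, but the logic is identical.
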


Combining \cref{lem:error_inexact_prox_1,lem:error_inexact_prox_2}, we get a bound on the total error due to inexact prox-maps as a function of the accuracy of each individual prox-map.
\begin{corollary}
\label{cor:qmmp_subopt_E}
For all $t\geq 0$, it holds that
\begin{align*}
Q(X_t)-\Opt_{\eqref{eq:qmmp}} &\leq (1-\alpha)^t\left[2\left(Q(X_0) - \Opt_{\eqref{eq:qmmp}}\right)\right] + 2\kappa E_t.
\end{align*}
\end{corollary}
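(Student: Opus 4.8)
The plan is to combine \cref{lem:error_inexact_prox_1,lem:error_inexact_prox_2}, using the scalar $\phi_t^*$ as a bridge, and then to bound the only residual term, $\phi_0$ evaluated at the minimizer, by strong convexity. Since $\mu I \preceq A(\gamma)$ for all $\gamma\in\cU$, the function $Q(X) = \max_{\gamma\in\cU}q(\gamma,X)$ is $\mu$-strongly convex and continuous (as $\cU$ is compact), hence attains its minimum at a unique point $\bar X$ with $Q(\bar X) = \Opt_{\eqref{eq:qmmp}}$.

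First I would apply \cref{lem:error_inexact_prox_1} to get $Q(X_t)\leq \phi_t^* + 2\kappa E_t^{(1)}$, and then note from \eqref{eq:estimating_sequence_form} that $\phi_t^* = \min_X \phi_t(X) \leq \phi_t(\bar X)$. Plugging $X=\bar X$ into \cref{lem:error_inexact_prox_2} gives
\[
\phi_t(\bar X) \leq \bigl(1-(1-\alpha)^t\bigr)Q(\bar X) + (1-\alpha)^t\phi_0(\bar X) + 2\kappa E_t^{(2)}.
\]
Chaining these three facts, using $Q(\bar X)=\Opt_{\eqref{eq:qmmp}}$ and $E_t = E_t^{(1)}+E_t^{(2)}$, yields
\[
Q(X_t) \leq \bigl(1-(1-\alpha)^t\bigr)\Opt_{\eqref{eq:qmmp}} + (1-\alpha)^t\phi_0(\bar X) + 2\kappa E_t.
\]

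It then remains to bound $\phi_0(\bar X) = Q(X_0) + \tfrac{\tilde\mu}{2}\norm{\bar X - X_0}_F^2$. Evaluating the $\mu$-strong convexity inequality for $Q$ at its minimizer $\bar X$ gives $\tfrac{\mu}{2}\norm{X_0-\bar X}_F^2 \leq Q(X_0)-Q(\bar X) = Q(X_0)-\Opt_{\eqref{eq:qmmp}}$; since $\tilde\mu = \mu/2 \leq \mu$, we obtain $\phi_0(\bar X) \leq 2Q(X_0)-\Opt_{\eqref{eq:qmmp}}$. Substituting this into the display and collecting the coefficient of $(1-\alpha)^t$ gives exactly $Q(X_t)-\Opt_{\eqref{eq:qmmp}} \leq (1-\alpha)^t\bigl[2(Q(X_0)-\Opt_{\eqref{eq:qmmp}})\bigr] + 2\kappa E_t$.

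I do not expect a genuine obstacle here: the corollary is bookkeeping on top of the two preceding lemmas, which already did the work of pushing the inexact prox-map inequality \eqref{eq:combined_inequality_inexact} through the estimating-sequence recursion. The only points needing a little care are getting the direction of $\phi_t^* \leq \phi_t(\bar X)$ right and checking that the two error accumulators recombine, $E_t^{(1)}+E_t^{(2)} = E_t$, which is immediate by induction from their defining recurrences.
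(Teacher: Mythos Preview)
Your proof is correct and follows essentially the same route as the paper's: chain \cref{lem:error_inexact_prox_1}, the inequality $\phi_t^*\le\phi_t(\bar X)$, and \cref{lem:error_inexact_prox_2} at the minimizer, then bound $\phi_0(\bar X)$ via $\mu$-strong convexity of $Q$ together with $\tilde\mu\le\mu$. The only cosmetic difference is notation ($\bar X$ versus the paper's $X^*_\cU$), and your explicit remark on existence of the minimizer is a small addition the paper leaves implicit.
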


We are now ready to present CautiousAGD (\cref{alg:cautious-agd}) and its guarantee.

\begin{algorithm}
\caption{CautiousAGD}
\label{alg:cautious-agd}
Given $q(\gamma,X)$ and $\cU$ satisfying \cref{as:qmmp}; $X_0\in\R^{(n-k)\times k}$, and a bound $\textup{gap}_0\in\R$ such that $Q(X_0) - \Opt_{\eqref{eq:qmmp}} \leq \textup{gap}_0$
{\small\begin{enumerate}[topsep=0pt,itemsep=0pt,parsep=0pt]
	\item Set $\tilde\mu,\tilde L,\tilde\kappa$ as in \eqref{eq:tilde_L_mu_kappa} and $\alpha  \coloneqq \tilde\kappa^{-1/2}$. Set $\Xi_0 \coloneqq X_0$.
	\item For $t \geq 0$
	\begin{enumerate}[topsep=0pt,itemsep=0pt,parsep=0pt]
		\item Compute an inexact prox-map $X_{t+1}$ satisfying
		\begin{align}
		\label{eq:cautious_agd_prox_map}
		Q_L(\Xi_t; X_{t+1}) \leq Q^*_L(\Xi_t) + \epsilon_t,\quad
		\text{where}\quad
		\epsilon_t = \begin{cases}
			\frac{\textup{gap}_0}{\kappa}\left(1-\frac{\alpha}{2}\right), & \text{if }t = 0, \text{ and}\\
			\frac{\textup{gap}_0}{\kappa}\left(1 - \frac{\alpha}{2}\right)^{t} \frac{\alpha}{2}, & \text{else}.
		\end{cases} 
		\end{align}
		\item Set $\Xi_{t+1}  \coloneqq X_{t+1} + \frac{1-\alpha}{1+\alpha} \left(X_{t+1} - X_t\right)$
	\end{enumerate}
\end{enumerate}}
\end{algorithm}

\begin{theorem}
\label{thm:cautious_agd_oracle}
Let $q(\gamma,X)$ and $\cU$ satisfy \cref{as:qmmp}.
Let $\textrm{gap}_0$ be a known upper bound on $Q(X_0) - \Opt_{\eqref{eq:qmmp}}$ and
let $X_t$ denote the iterates produced by \cref{alg:cautious-agd} with starting point $X_0$.
Then, for all $t \geq 1$, the iterate $X_t$ satisfies
\begin{align*}
Q(X_t)-\Opt_{\eqref{eq:qmmp}} &\leq \left(1 - \frac{\alpha}{2}\right)^t \left(4\cdot\textup{gap}_0\right).
\end{align*}
In particular, $Q(X_T)-\Opt_{\eqref{eq:qmmp}} \leq \epsilon$ after at most
$T = O\left(\sqrt{\kappa}\log\left(\frac{\textup{gap}_0}{\epsilon}\right)\right)$
iterations. The $t$-th iteration requires computing a prox-map $X_{t+1}$ satisfying \eqref{eq:cautious_agd_prox_map}.
\end{theorem}
\begin{proof}
We first claim that $E_t = (\textup{gap}_0/\kappa)(1- \alpha/2)^t$ for all $t \geq 1$.
Indeed, this claim holds for $t = 1$ as $E_1 = \epsilon_0 = (\textup{gap}_0/\kappa)\left(1 - \alpha/2\right)$ by construction (see \eqref{eq:cautious_agd_prox_map}). Then, by induction
\begin{align*}
E_{t+1} &= (1-\alpha)E_t + \epsilon_t\\
&= (1-\alpha)\frac{\textup{gap}_0}{\kappa}\left(1-\frac{\alpha}{2}\right)^{t} + \frac{\textup{gap}_0}{\kappa}\left(1-\frac{\alpha}{2}\right)^{t}\frac{\alpha}{2} = \frac{\textup{gap}_0}{\kappa}\left(1 - \frac{\alpha}{2}\right)^{t+1}.
\end{align*}
Then, the bound on $Q(X_t) - \Opt_{\eqref{eq:qmmp}}$ follows from \cref{cor:qmmp_subopt_E} and the starting condition $Q(X_0) - \Opt_{\eqref{eq:qmmp}} \leq \textup{gap}_0$.
\end{proof}

\begin{remark}
We refer to \cref{alg:cautious-agd} as CautiousAGD to allude to the fact that \cref{alg:cautious-agd} is simply AGD-MM with inexact prox-maps and smaller extra-gradient steps. Specifically, AGD-MM and CautiousAGD set
\begin{gather*}
\Xi_{t+1} = X_{t+1} + \left(\frac{1-\kappa^{-1/2}}{1+\kappa^{-1/2}}\right)(X_{t+1} - X_t),\quad\text{and}\\
\Xi_{t+1} = X_{t+1} + \left(\frac{1-\tilde\kappa^{-1/2}}{1+\tilde\kappa^{-1/2}}\right)(X_{t+1} - X_t)
\end{gather*}
respectively. Note that $\kappa \leq \tilde \kappa \leq 2\kappa$.
\ifjelse{\qed}{}
\end{remark}

\subsection{Approximating the prox-map}
\label{subsec:approximating_prox_map}

Recall that the prox-map $X_L(\Xi)$ is the minimizer of $Q_L(\Xi;X)$:
\begin{align*}
&\min_{X\in\R^{(n-k)\times k}} Q_L(\Xi;X)= \min_{X\in\R^{(n-k)\times k}}\max_{\gamma\in\cU}\left(\frac{L}{2}\norm{X - \Xi}_F^2 + \ip{\grad_2\,q(\gamma,\Xi),X - \Xi} + q(\gamma,\Xi)\right).
\end{align*}
There are a number of ways to solve for $X_L(\Xi)$.
For example, when $m$ is small, one may apply an interior point method to solve for $\gamma$ in the dual problem:
\begin{align}
\label{eq:adjoint_problem}
&\max_{\gamma\in\cU}\left[\min_{X\in\R^{(n-k)\times k}}\left(\frac{L}{2}\norm{X - \Xi}_F^2 + \ip{\grad_2\,q(\gamma,\Xi),X - \Xi} + q(\gamma,\Xi)\right)\right]\nonumber\\
&\qquad = \max_{\gamma\in\cU} 
\left(-\frac{1}{2L}\norm{\grad_2\,q(\gamma,\Xi)}_F^2 + q(\gamma,\Xi)\right).
\end{align}
Note here that strong duality holds as the term inside the parenthesis is linear in $\gamma$ and convex quadratic in $X$ and $\cU$ is a compact convex set so we can apply Sion's Minimax Theorem~\cite{sion1958general}. 
An approximate primal solution $\tilde X$ can then be reconstructed from an approximate solution $\tilde\gamma$ of the dual problem by setting $\tilde X = \Xi - \frac{\grad_2\, q(\tilde\gamma, \Xi)}{L}$.

Sticking with FOMs, one may apply the strongly convex excessive gap technique \cite[Chapter 6.2]{nesterov2018lectures} to compute the prox-map $X_L(\Xi)$ as well.
We will rewrite $Q_L(\Xi;X)$ in a form that is more natural for applying the excessive gap technique~\cite[Algorithm 6.2.37]{nesterov2018lectures}.
Note that $\grad_2\, q(\gamma,\Xi) = A(\gamma)\Xi + B(\gamma)$.
Thus, defining the matrix $G_\obj \coloneqq A_\obj \Xi + B_\obj$ and the linear operator $\cG:\gamma \mapsto \sum_{i=1}^m \gamma_i\left(A_i\Xi + B_i\right)$, we have $\grad_2\, q(\gamma,\Xi) = A(\gamma)\Xi + B(\gamma) = G_\obj + \cG\gamma$. Hence, we arrive at 
\begin{align}
\label{eq:prox_map}
Q_L(\Xi;X) &= \frac{L}{2}\norm{X - \Xi}_F^2+ \ip{G_\obj, X - \Xi} + \max_{\gamma\in\cU}\set{\ip{\cG\gamma, X} + \left(q(\gamma,\Xi) - \ip{\cG\gamma,\Xi}\right)}.
\end{align}
The inner saddle-point function 
is strongly convex in $X$ and linear in $\gamma$ so that we may approximate the prox-map by approximately solving a strongly convex--concave saddle point problem.
Thus, applying \cite[Theorem 6.2.4]{nesterov2018lectures} to $Q_L(\Xi;X)$ in the form \eqref{eq:prox_map} gives the following result.
\begin{theorem}
\label{thm:oracle_cost}
Initialize \cite[Theorem 6.2.4]{nesterov2018lectures} with initial iterate $\gamma_0\in\cU$.
Let $(\tilde \gamma, \tilde X)$ denote the output of \cite[Algorithm 6.2.37]{nesterov2018lectures} after
\begin{align*}
O\left(\frac{\max_{\gamma\in\bS^{m-1}}\norm{\cG\gamma}_F \cdot \max_{\gamma\in\cU}\norm{\gamma - \gamma_0}_2}{\sqrt{L\epsilon}}\right)
\end{align*}
iterations. Here, each iteration may require two exact projections onto $\cU$. Then,
\begin{align}
\label{eq:saddle_point_error}
Q_L(\Xi;\tilde X) - Q^*_L(\Xi) \leq Q_L(\Xi;\tilde X) - \left(q(\bar\gamma_k, \Xi) - \frac{\norm{\grad_2\, q(\bar \gamma_k,\Xi)}_F^2}{2L}\right) \leq \epsilon.
\end{align}
\end{theorem}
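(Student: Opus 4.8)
The plan is to recognize \eqref{eq:prox_map} as a strongly convex--concave saddle-point problem with bilinear coupling of exactly the kind handled by Nesterov's strongly convex excessive gap technique \cite[Chapter 6.2]{nesterov2018lectures}, to read off the three problem parameters that enter its convergence bound, and then to invoke \cite[Theorem 6.2.4]{nesterov2018lectures} essentially verbatim. Concretely, I would write the inner saddle function of \eqref{eq:prox_map} as
\begin{align*}
\Psi(X,\gamma)\coloneqq \frac{L}{2}\norm{X-\Xi}_F^2 + \ip{G_\obj, X-\Xi} + \ip{\cG\gamma, X} + \big(q(\gamma,\Xi)-\ip{\cG\gamma,\Xi}\big),
\end{align*}
so that $Q_L(\Xi;X)=\max_{\gamma\in\cU}\Psi(X,\gamma)$ and $\min_{X}\Psi(X,\gamma)$ is exactly the dual objective in \eqref{eq:adjoint_problem}, attained at $X(\gamma)=\Xi-\grad_2\,q(\gamma,\Xi)/L$. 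The relevant parameters are: (i) the primal strong-convexity modulus, which is $L$ in the Frobenius geometry since $\Psi(\cdot,\gamma)$ differs from $\tfrac{L}{2}\norm{X-\Xi}_F^2$ by an affine term, uniformly in $\gamma$; (ii) the norm of the coupling operator, $\norm{\cG}=\max_{\gamma\in\bS^{m-1}}\norm{\cG\gamma}_F$, i.e.\ the operator norm of $\cG\colon(\R^m,\norm{\cdot}_2)\to(\R^{(n-k)\times k},\norm{\cdot}_F)$; and (iii) the residual term $q(\gamma,\Xi)-\ip{\cG\gamma,\Xi}$, which is affine in $\gamma$ because $q(\gamma,\cdot)=q_\obj+\sum_i\gamma_i q_i$ is affine in $\gamma$ and $\ip{\cG\gamma,\Xi}$ is linear in $\gamma$. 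On the dual block I would equip $\cU$ with the Euclidean prox-function $\tfrac12\norm{\gamma-\gamma_0}_2^2$, which is $1$-strongly convex and has maximal value $\tfrac12\max_{\gamma\in\cU}\norm{\gamma-\gamma_0}_2^2$ over $\cU$; this is the source of the factor $\max_{\gamma\in\cU}\norm{\gamma-\gamma_0}_2$ in the iteration count.

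With these identifications, \cite[Algorithm 6.2.37]{nesterov2018lectures} applied to $\Psi$ produces after $k$ iterations a primal iterate $\tilde X$ and a dual iterate $\bar\gamma_k\in\cU$ together with the certified primal--dual gap bound
\begin{align*}
Q_L(\Xi;\tilde X)-\Big(q(\bar\gamma_k,\Xi)-\frac{1}{2L}\norm{\grad_2\,q(\bar\gamma_k,\Xi)}_F^2\Big)\le \frac{C\norm{\cG}^2\max_{\gamma\in\cU}\norm{\gamma-\gamma_0}_2^2}{L\,(k+1)(k+2)}
\end{align*}
for an absolute constant $C$, where the bracketed quantity is the dual objective of \eqref{eq:adjoint_problem} at $\bar\gamma_k$; this is precisely \cite[Theorem 6.2.4]{nesterov2018lectures} specialized to one-sided strong convexity. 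Requiring the right-hand side to be at most $\epsilon$ and solving for $k$ yields $k=O\big(\norm{\cG}\max_{\gamma\in\cU}\norm{\gamma-\gamma_0}_2/\sqrt{L\epsilon}\big)$, the claimed iteration count. Since $q(\bar\gamma_k,\Xi)-\tfrac{1}{2L}\norm{\grad_2\,q(\bar\gamma_k,\Xi)}_F^2\le Q_L^*(\Xi)$ by weak duality for \eqref{eq:prox_map} (see the discussion after \eqref{eq:adjoint_problem}), the chain
\begin{align*}
0\le Q_L(\Xi;\tilde X)-Q_L^*(\Xi)\le Q_L(\Xi;\tilde X)-\Big(q(\bar\gamma_k,\Xi)-\frac{1}{2L}\norm{\grad_2\,q(\bar\gamma_k,\Xi)}_F^2\Big)\le\epsilon
\end{align*}
gives \eqref{eq:saddle_point_error}. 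The per-iteration cost of at most two exact projections onto $\cU$ is read directly off \cite[Algorithm 6.2.37]{nesterov2018lectures}: one projection arises in the proximal step defining the smoothed dual iterate and one in the dual-averaging update, while all remaining operations are gradient evaluations of $\Psi$, i.e.\ applications of $\cG$, its adjoint, and $A(\cdot)$.

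The only work beyond bookkeeping is checking that the abstract hypotheses of \cite[Theorem 6.2.4]{nesterov2018lectures} hold with exactly these constants: that the primal strong-convexity modulus is genuinely $L$ in the Frobenius norm (and not in some rescaled inner product), that $\norm{\cG}$ as defined coincides with the operator norm from $(\R^m,\norm{\cdot}_2)$ to $(\R^{(n-k)\times k},\norm{\cdot}_F)$ appearing in the smoothing bound, and that centering the dual prox-function at the user-supplied $\gamma_0$ rather than at a canonical center only affects the estimate through $\max_{\gamma\in\cU}\norm{\gamma-\gamma_0}_2$. None of these is deep, but together they pin down the precise form of the iteration complexity, so I expect this verification to be the main (and essentially only) obstacle.
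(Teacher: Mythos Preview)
Your proposal is correct and takes exactly the same approach as the paper: the paper's entire ``proof'' is the single sentence ``Thus, applying \cite[Theorem 6.2.4]{nesterov2018lectures} to $Q_L(\Xi;X)$ in the form \eqref{eq:prox_map} gives the following result,'' so your write-up is in fact a more detailed unpacking of the parameter identification that the paper leaves implicit.
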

\begin{remark}\label{rem:prox_map_agd}
For simplicity, in our numerical implementation of CertSDP, we opt to run the accelerated gradient descent method for simple sets~\cite[Algorithm 2.2.63]{nesterov2018lectures} on the dual problem \eqref{eq:adjoint_problem}.
\ifjelse{\qed}{}
\end{remark}

\subsection{Putting the pieces together}
\label{subsec:fom_explicit_oracle}
We conclude this section by showing how to combine \cref{thm:cautious_agd_oracle,thm:oracle_cost} to get a guarantee on the total iteration count (including iterations \emph{within} the inexact prox-map calls).
To this end, we will need an additional assumption on the norms of various quantities.

\begin{assumption}
\label{as:regularity}
	Suppose \cref{alg:cautious-agd} starts at $X_0 = 0_{(n-k)\times k}$ and
$R>0$ satisfies
	\begin{gather*}
	\frac{\norm{\grad_2\,q(\tilde\gamma,X_0)}_F}{L} = \frac{\norm{B(\tilde\gamma)}_F}{L}\leq R
	\end{gather*}
	where $\tilde\gamma\in\argmax_{\gamma\in\cU}q(\gamma,X_0)$. Let $D$ denote the diameter of $\cU$; this is the natural scale parameter for the dual iterates.
	We will see soon that $R$ is a natural scale parameter for the primal iterates $X_t,\,\Xi_t\in\R^{(n-k)\times k}$. 
	Suppose $H\geq 1$ bounds
	\begin{gather*}
	\frac{D\norm{\sum_{i=1}^m\gamma_i A_i}_2}{\mu},
	\quad\text{and}\quad
	\frac{D\norm{\sum_{i=1}^m\gamma_i B_i}_F}{\mu\kappa R}
	\end{gather*}
	for all $\gamma\in \bS^{m-1}$. We will assume algorithmic access to $D$, $H$, and $R$.
\end{assumption}

\begin{remark}
The algorithm CautiousAGD will be called repeatedly from a parent algorithm CertSDP (to be defined in \cref{sec:solving_exact_sdps}). This parent algorithm CertSDP will define $\mu, L, D, H, R$ in such a way that \cref{as:qmmp,as:regularity} are satisfied. The necessary $\textup{gap}_0$ used in CautiousAGD is also defined in terms of these quantities in the following lemma.
\ifjelse{\qed}{}
\end{remark}

The following two lemmas allow us to bound $\norm{\Xi_{t}}_F$ and the operator norm
$\max_{\gamma\in\bS^{m-1}} \norm{\cG\gamma}_F$ in
\cref{thm:oracle_cost}. Their proofs are deferred to \cref{sec:deferred}.

\begin{lemma}
    \label{lem:xi_t_norm_bound}
Under~\cref{as:regularity}, it holds that $Q(X_0)- \Opt_{\eqref{eq:qmmp}} \leq \frac{\mu\kappa^2R^2}{2}$. In particular, we may take $\textup{gap}_0 = \frac{\mu\kappa^2R^2}{2}$ in \cref{alg:cautious-agd}. Then, for every $t\geq 0$, the iterate $\Xi_t$ computed by \cref{alg:cautious-agd} satisfies
$\norm{\Xi_{t}}_F \leq 10\kappa R$.
\end{lemma}

\begin{lemma}
\label{lem:grad_hessian_psi}
Suppose~\cref{as:regularity} holds and we set $\textup{gap}_0 = \frac{\mu\kappa^2 R^2}{2}$ in \cref{alg:cautious-agd}. Then, for every iterate $t\geq 0$, we have
\begin{gather*}
\max_{\gamma\in\bS^{m-1}} \norm{\cG\gamma}_F \leq 11\frac{\mu \kappa H R}{D}.
\end{gather*}
\end{lemma}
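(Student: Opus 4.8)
The plan is to expand the definition of the linear operator $\cG$, bound its action by the triangle inequality, and then invoke the two quantities controlled in \cref{as:regularity} together with the bound $\norm{\Xi_t}_F \leq 10\kappa R$ established in the preceding lemma. Concretely, recall that $\cG:\gamma\mapsto\sum_{i=1}^m\gamma_i(A_i\Xi_t + B_i)$, so for any $\gamma\in\R^m$ we may write
\begin{align*}
\cG\gamma = \left(\sum_{i=1}^m\gamma_i A_i\right)\Xi_t + \sum_{i=1}^m\gamma_i B_i.
\end{align*}

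First I would apply the triangle inequality and submultiplicativity of the norms to get
\begin{align*}
\norm{\cG\gamma}_F \leq \norm{\sum_{i=1}^m\gamma_i A_i}_2\norm{\Xi_t}_F + \norm{\sum_{i=1}^m\gamma_i B_i}_F.
\end{align*}
Next, for $\gamma\in\bS^{m-1}$, \cref{as:regularity} gives $\norm{\sum_{i=1}^m\gamma_i A_i}_2\leq \mu H/D$ and $\norm{\sum_{i=1}^m\gamma_i B_i}_F\leq \mu\kappa R H/D$. Combining these with the bound $\norm{\Xi_t}_F\leq 10\kappa R$ from the previous lemma yields
\begin{align*}
\norm{\cG\gamma}_F \leq \frac{\mu H}{D}\cdot 10\kappa R + \frac{\mu\kappa R H}{D} = 11\frac{\mu\kappa H R}{D},
\end{align*}
and taking the supremum over $\gamma\in\bS^{m-1}$ finishes the proof.

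There is essentially no hard step here: the argument is a one-line norm estimate. The only point requiring care is making sure the scaling quantities in \cref{as:regularity} are matched correctly to the two terms — in particular noting that the $B$-term bound already carries the extra factor $\kappa R$, which is exactly what makes the two contributions homogeneous and lets them add to the clean constant $11$. One should also confirm that the lemma is implicitly quantified over the iterates $\Xi_t$ produced by \cref{alg:cautious-agd} with $\textup{gap}_0 = \mu\kappa^2R^2/2$, so that the hypothesis $\norm{\Xi_t}_F\leq 10\kappa R$ of the previous lemma is indeed available.
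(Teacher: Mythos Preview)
Your proposal is correct and matches the paper's proof essentially line for line: both expand $\cG\gamma$, apply the triangle inequality with submultiplicativity, and then plug in the bounds from \cref{as:regularity} together with $\norm{\Xi_t}_F\leq 10\kappa R$ from the preceding lemma. If anything, your write-up is slightly more explicit than the paper's in showing how the constant $11 = 10 + 1$ arises.
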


The following theorem
gives the iteration complexity of \cref{alg:cautious-agd}
instantiated with the excessive gap technique to compute the prox-map.
It follows as a corollary to \cref{thm:oracle_cost,thm:cautious_agd_oracle,lem:grad_hessian_psi}.
\begin{theorem}
\label{thm:overall_qmmp_iterations}
Let $q(\gamma, X)$ and $\cU$ satisfy \cref{as:qmmp,as:regularity}.
Suppose $\textup{gap}_0$ is set to $\frac{\mu\kappa^2R^2}{2}$ in \cref{alg:cautious-agd}.
Let $X_t$ denote the iterates produced by \cref{alg:cautious-agd} with starting point $X_0 = 0_{(n-k)\times k}$. Then, for all $t\geq 1$, the iterate $X_t$ satisfies
\begin{align*}
Q(X_t) - \Opt_{\eqref{eq:qmmp}} \leq \left(1 - \frac{\alpha}{2}\right)^t \left(2\mu\kappa^2 R^2\right).
\end{align*}
In particular, $Q(X_T) - \Opt_{\eqref{eq:qmmp}} \leq \epsilon$ after at most
\begin{align*}
T = O\left(\sqrt{\kappa}\log\left(\frac{\mu\kappa^2 R^2}{\epsilon}\right)\right)
\end{align*}
outer iterations of \cref{alg:cautious-agd}. The iterate $X_T$ is computed after a total (including iterations within the inexact prox-map computations) of $O\left(\frac{\kappa^{5/4} HR \sqrt{L}}{\sqrt{\epsilon}}\right)$ iterations.
\end{theorem}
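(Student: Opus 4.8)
The plan is to chain together the three cited ingredients. First, the (unlabeled) lemma immediately preceding the theorem establishes that $\textup{gap}_0 = \tfrac{\mu\kappa^2 R^2}{2}$ is a valid upper bound on $Q(X_0)-\Opt_{\eqref{eq:qmmp}}$ when $X_0 = 0_{(n-k)\times k}$, so \cref{thm:cautious_agd_oracle} applies verbatim and gives
\[
Q(X_t)-\Opt_{\eqref{eq:qmmp}} \leq \left(1-\tfrac{\alpha}{2}\right)^t\left(4\,\textup{gap}_0\right) = \left(1-\tfrac{\alpha}{2}\right)^t\left(2\mu\kappa^2 R^2\right)
\]
for every $t\geq 1$, which is the first displayed inequality. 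To convert this into the stated outer iteration count, I would use $\alpha=\tilde\kappa^{-1/2}$ together with $\kappa\leq\tilde\kappa\leq 2\kappa$, so that $\alpha/2=\Theta(\kappa^{-1/2})$; then the least $T$ with $(1-\alpha/2)^T(2\mu\kappa^2 R^2)\leq\epsilon$ satisfies $T=O(\alpha^{-1}\log(\mu\kappa^2 R^2/\epsilon))=O(\sqrt\kappa\,\log(\mu\kappa^2 R^2/\epsilon))$.

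The substantive part is the total count, including iterations inside the prox-map calls. At outer iteration $t$, \cref{alg:cautious-agd} requires a prox-map of accuracy $\epsilon_t$; applying \cref{thm:oracle_cost} with an initializer $\gamma_0\in\cU$ (so that $\max_{\gamma\in\cU}\norm{\gamma-\gamma_0}_2\leq D$ since $D$ is the diameter of $\cU$), and bounding the associated operator norm uniformly in $t$ via \cref{lem:grad_hessian_psi}, namely $\max_{\gamma\in\bS^{m-1}}\norm{\cG\gamma}_F\leq 11\mu\kappa HR/D$, the number of inner iterations at step $t$ is
\[
N_t = O\!\left(\frac{(\mu\kappa HR/D)\cdot D}{\sqrt{L\epsilon_t}}\right) = O\!\left(\frac{\mu\kappa HR}{\sqrt{L\epsilon_t}}\right).
\]
I would then substitute $\epsilon_t=\Theta\!\left(\mu\kappa R^2\,\alpha\,(1-\alpha/2)^t\right)$ for $t\geq 1$ (with $\epsilon_0$ only larger), use the identity $\mu\kappa=L$, and simplify to get $N_t = O\!\left(H\alpha^{-1/2}(1-\alpha/2)^{-t/2}\right)=O\!\left(H\kappa^{1/4}(1-\alpha/2)^{-t/2}\right)$.

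Finally I would sum $\sum_{t=0}^{T-1}N_t$. The common ratio $(1-\alpha/2)^{-1/2}$ exceeds $1$, so the geometric series is dominated by its last term up to the factor $\bigl((1-\alpha/2)^{-1/2}-1\bigr)^{-1}=O(1/\alpha)=O(\sqrt\kappa)$, giving $\sum_t N_t=O\!\left(H\kappa^{3/4}(1-\alpha/2)^{-T/2}\right)$. From the first displayed bound, the chosen stopping time satisfies $(1-\alpha/2)^{T}=\Theta\!\left(\epsilon/(\mu\kappa^2 R^2)\right)$, hence $(1-\alpha/2)^{-T/2}=\Theta\!\left(\kappa R\sqrt\mu/\sqrt\epsilon\right)$, so the total becomes $O\!\left(H\kappa^{3/4}\cdot\kappa R\sqrt\mu/\sqrt\epsilon\right)=O\!\left(\kappa^{7/4}HR\sqrt\mu/\sqrt\epsilon\right)=O\!\left(\kappa^{5/4}HR\sqrt L/\sqrt\epsilon\right)$ after re-substituting $\sqrt L=\sqrt{\mu\kappa}$; the $O(T)$ overhead from rounding each $N_t$ up to a positive integer is of lower order in $\epsilon$.

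The step I expect to be the main obstacle is this last geometric-sum bookkeeping. Because the prescribed prox accuracies $\epsilon_t$ shrink geometrically, the per-step inner costs \emph{grow} geometrically, so the total is $\Theta(1/\alpha)$ times the final (largest) term rather than $T$ times a typical term; getting the exponents of $\kappa$ exactly right depends on tracking this carefully together with the collapse $\mu\kappa=L$ and with $\alpha=\Theta(\kappa^{-1/2})$.
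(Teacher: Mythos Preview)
Your proposal is correct and follows essentially the same route as the paper: invoke \cref{thm:cautious_agd_oracle} with $\textup{gap}_0=\tfrac{\mu\kappa^2R^2}{2}$ for the convergence and outer-iteration bounds, bound each inner prox-map cost via \cref{thm:oracle_cost} together with \cref{lem:grad_hessian_psi}, and then sum the resulting geometric series, controlling $(1-\alpha/2)^{-T/2}$ via the minimality of the stopping time $T$. Your bookkeeping (the $\mu\kappa=L$ collapse, the $\Theta(1/\alpha)$ geometric-sum factor, and the final exponent juggling) matches the paper's exactly.
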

\begin{proof}
We will take $T$ to be the first positive integer such that
\begin{align*}
\left(1 - \frac{\alpha}{2}\right)^{T}\left(2\mu\kappa^2R^2\right) \leq \epsilon.
\end{align*}
Clearly, $T = O\left(\sqrt{\kappa}\log\left(\kappa LR^2/\epsilon\right)\right)$.
Next, if $T>1$, then by the maximality of $T$ we have 
\begin{align*}
\left(1 - \frac{\alpha}{2}\right)^{T} \geq \left(1-\frac{\alpha}{2}\right)\left(\frac{\epsilon}{2\mu\kappa^2R^2}\right) \geq \frac{\epsilon}{4\mu\kappa^2R^2}.
\end{align*}
From~\eqref{eq:cautious_agd_prox_map} and $\textup{gap}_0=\frac{\mu\kappa^2R^2}{2}$, we deduce that $\epsilon_t \geq \frac{\mu\kappa R^2}{2}\left(1 - \frac{\alpha}{2}\right)^t \frac{\alpha}{2}$. 
By \cref{lem:grad_hessian_psi} and \cref{as:regularity}, we may bound
\begin{align*}
\max_{\gamma\in\bS^{m-1}}\norm{\cG\gamma}_F \cdot \max_{\gamma\in\cU}\norm{\gamma - \gamma_0}_2 \leq 11 \mu\kappa HR.
\end{align*}

Thus, $X_t$ can be computed in
\begin{align*}
O\left(\frac{\mu\kappa H R}{\sqrt{L\epsilon_t}}\right) = O\left(\kappa^{1/4}H(1-\alpha/2)^{-t/2}\right)
\end{align*}
iterations. Summing over the first $T$ outer iterations and observing our lower bound on $\left(1 - \frac{\alpha}{2}\right)^T$, we have that
\begin{align*}
\sum_{t=0}^T \left(1 - \frac{\alpha}{2}\right)^{-t/2} &\leq O\left(\frac{\left(1-\frac{\alpha}{2}\right)^{-T/2}}{\alpha}\right) = O\left(\frac{\kappa R\sqrt{L}}{\sqrt{\epsilon}}\right).\qedhere
\end{align*}
\end{proof}

\section{Solving $k$-exact SDPs via strongly convex QMMP algorithms}
\label{sec:solving_exact_sdps}
In this section, we show how to combine \cref{thm:overall_qmmp_iterations,thm:strong_convex_reform} to develop first-order methods for approximately solving rank-$k$ exact QMP-like SDPs. We will use the following notion of an approximate solution to \eqref{eq:primal_dual_SDP}.

\begin{definition}
\label{def:eps_opt_eps_feas}
We will say that $\tilde Y\in\S^n$ is \emph{$\epsilon$-optimal and $\delta$-feasible} for \eqref{eq:primal_dual_SDP} if $\tilde Y \succeq 0$,
\begin{gather*}
\ip{M_\obj, \tilde Y} \leq \Opt_{\eqref{eq:primal_dual_SDP}} + \epsilon,\quad\text{and}\quad
\left(\sum_{i=1}^m \left(\ip{M_i, \tilde Y} + d_i\right)^2\right)^{1/2} \leq \delta.\ifjelse{\qed}{}
\end{gather*}
\end{definition}

The final piece towards this goal is developing algorithms for constructing a certificate of strict complementarity $\cU$.

By \cref{def:subspace_certificate}, the properties we need to ensure for $\cU$ are that $\gamma^*\in\cU$ and $A(\gamma)\succ 0$ for all $\gamma\in\cU$.
We will construct $\cU$ by taking it to be an $\ell_2$-ball centered at a sufficiently accurate estimate $\tilde\gamma$ of $\gamma^*$.

Recall that $A(\gamma^*)\succ 0$ (see \cref{lem:complementary_subspace}).
Clearly then,
for all $\tilde\gamma$ close enough to $\gamma^*$, we have that $A(\tilde\gamma)\succ 0$ and there exists some $r>0$ such that $\tilde \cU\coloneqq \mathbb{B}(\tilde\gamma, r)$ satisfies $A(\gamma)\succ 0$ for all $\gamma\in\tilde \cU$.
We consider one setting for $r$ below.
It remains to ask, does the condition that $\gamma^*\in\tilde \cU$ hold?
Below, we show
that this condition indeed holds when $\tilde\gamma$ is a sufficiently accurate estimate of $\gamma^*$ and that we can effectively check this condition using CautiousAGD.

\begin{figure}[t]
	\centering
	 \def\svgwidth{250pt}
     \begingroup%
     \makeatletter%
     \providecommand\color[2][]{%
       \errmessage{(Inkscape) Color is used for the text in Inkscape, but the package 'color.sty' is not loaded}%
       \renewcommand\color[2][]{}%
     }%
     \providecommand\transparent[1]{%
       \errmessage{(Inkscape) Transparency is used (non-zero) for the text in Inkscape, but the package 'transparent.sty' is not loaded}%
       \renewcommand\transparent[1]{}%
     }%
     \providecommand\rotatebox[2]{#2}%
     \newcommand*\fsize{\dimexpr\f@size pt\relax}%
     \newcommand*\lineheight[1]{\fontsize{\fsize}{#1\fsize}\selectfont}%
     \ifx\svgwidth\undefined%
       \setlength{\unitlength}{216bp}%
       \ifx\svgscale\undefined%
         \relax%
       \else%
         \setlength{\unitlength}{\unitlength * \real{\svgscale}}%
       \fi%
     \else%
       \setlength{\unitlength}{\svgwidth}%
     \fi%
     \global\let\svgwidth\undefined%
     \global\let\svgscale\undefined%
     \makeatother%
     \begin{picture}(1,0.45833333)%
       \lineheight{1}%
       \setlength\tabcolsep{0pt}%
       \put(0.65643724,0.01870465){\color[rgb]{0,0,0}\makebox(0,0)[lt]{\lineheight{1.25}\smash{\begin{tabular}[t]{l}$\set{\gamma\in\R^m:\,A(\gamma)\succeq\tfrac{\hat\mu}{2} I}$\end{tabular}}}}%
       \put(0,0){\includegraphics[width=\unitlength,page=1]{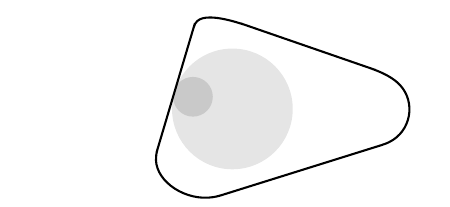}}%
       \put(0.57103893,0.10889402){\color[rgb]{0,0,0}\makebox(0,0)[lt]{\lineheight{1.25}\smash{\begin{tabular}[t]{l}$\gamma^*$\end{tabular}}}}%
       \put(0,0){\includegraphics[width=\unitlength,page=2]{subgradient.pdf}}%
       \put(0.06234405,0.06421759){\color[rgb]{0,0,0}\makebox(0,0)[lt]{\lineheight{1.25}\smash{\begin{tabular}[t]{l}$\gamma^{(1)}$\end{tabular}}}}%
       \put(0,0){\includegraphics[width=\unitlength,page=3]{subgradient.pdf}}%
       \put(0.27813723,0.15354747){\color[rgb]{0,0,0}\makebox(0,0)[lt]{\lineheight{1.25}\smash{\begin{tabular}[t]{l}$\gamma^{(2)}$\end{tabular}}}}%
       \put(0.39240662,0.17408517){\color[rgb]{0,0,0}\makebox(0,0)[lt]{\lineheight{1.25}\smash{\begin{tabular}[t]{l}$\gamma^{(3)}$\end{tabular}}}}%
       \put(0.47983827,0.1484704){\color[rgb]{0,0,0}\makebox(0,0)[lt]{\lineheight{1.25}\smash{\begin{tabular}[t]{l}$\gamma^{(4)}$\end{tabular}}}}%
       \put(0.47151788,0.27911434){\color[rgb]{0,0,0}\makebox(0,0)[lt]{\lineheight{1.25}\smash{\begin{tabular}[t]{l}$\cU^{(3)}$\end{tabular}}}}%
       \put(0.65028477,0.29396835){\color[rgb]{0,0,0}\makebox(0,0)[lt]{\lineheight{1.25}\smash{\begin{tabular}[t]{l}$\cU^{(4)}$\end{tabular}}}}%
     \end{picture}%
   \endgroup%
   
	\caption{CertSDP (\cref{alg:exact_sdp_qmmp}) produces a series of iterates $\gamma^{(i)}\to\gamma^*$. For each $\gamma^{(i)}$, CertSDP constructs a ball $\cU^{(i)}$ around $\gamma^{(i)}$.
	Intuitively, we want to pick $\cU^{(i)}$
to be the largest ball around $\gamma^{(i)}$ for which we can solve the associated QMMP efficiently, in hopes of enclosing $\gamma^*$. We will thus choose $\cU^{(i)}$ to satisfy certain regularity estimates (see \eqref{eq:r} and \cref{lem:parameters_for_qmmp}).	At the minimum, we will ensure
	$A(\gamma)\succeq \hat\mu/2$ for all $\gamma\in\cU^{(i)}$.
	}
\end{figure}

The following assumption collects the regularity assumptions we need to make in our algorithm CautiousAGD.
\begin{assumption}
\label{as:primal_dual_regularity}
	Suppose we have algorithmic access to
	\begin{itemize}
		\item parameters $0<\hat\mu\leq \hat L$ such that $\hat\mu I \preceq A(\gamma^*)\preceq \hat LI$,
		\item parameters $\hat R_p,\, \hat R_d>0$ such that $\norm{X^*}_F\leq \hat R_p$ and $\norm{\gamma^*}_2 \leq \hat R_d$, and
		\item a parameter $\hat \rho > 0$ upper bounding
		\begin{gather*}
		\frac{\hat\mu}{\hat R_d},\qquad
		\norm{\sum_{i=1}^m \gamma_i A_i}_2, \quad\text{and}\quad
		\frac{\norm{\sum_{i=1}^m \gamma_i B_i}_F}{\hat R_p}\quad\forall \gamma\in\bS^{m-1}.
		\end{gather*}
	\end{itemize}
	For notational simplicity, we will additionally assume $\hat R_p,\,\hat R_d \geq 1$. This is not strictly necessary and simply allows us to write $O(\hat R_p)$ and $O(\hat R_d)$ in place of $O(1+\hat R_p)$ and $O(1 + \hat R_d)$.
\end{assumption}

Note from the identity $X^* = -A(\gamma^*)^{-1}B(\gamma^*)$ that $\norm{B(\gamma^*)}_F \leq \hat L \hat  R_p$.
Now, suppose $\gamma^{(1)},\,\gamma^{(2)},\dots$ is a sequence converging to $\gamma^*$ (such a sequence can be constructed via subgradient methods~\cite{levy2018online}; see also \cite[Section 6.2.2]{ding2021optimal}).
Given $\gamma^{(i)}$, define
\begin{align}
\label{eq:r}
r^{(i)}\coloneqq &\min\left(
\frac{\hat\mu}{2\hat\rho},\,
2 \hat R_d - \norm{\gamma^{(i)}}_2,~\frac{\lambda_{\min}\left(A\left(\gamma^{(i)}\right)\right) - \hat\mu/2}{\hat\rho},\,\right.\nonumber\\
&\qquad\qquad\qquad\left.
\frac{2\hat L - \lambda_{\max}\left(A\left(\gamma^{(i)}\right)\right)}{\hat\rho},~
\frac{2\hat L\hat R^{}_p - \norm{B\left(\gamma^{(i)}\right)}_F}{\hat\rho\hat R^{}_p}
\right).
\end{align}
If $r^{(i)}$ is positive, define $\cU^{(i)}\coloneqq\mathbb{B}(\gamma^{(i)},r^{(i)})$.

We present three lemmas below.
The first lemma states that $r^{(i)}$ is positive and $\gamma^*\in\cU^{(i)}$ for all $\gamma^{(i)}$ sufficiently close to $\gamma^*$.
The second lemma establishes parameters for which the regularity conditions of \cref{as:regularity} hold for $q(\gamma,X)$ along with $\cU^{(i)}$. 
Finally, the third lemma shows that for each $\cU^{(i)}$, an approximate solution of the corresponding strongly convex QMMP (which can be computed using \cref{alg:cautious-agd}) can be used to either produce an approximate optimizer of the underlying SDP or declare $\gamma^*\notin\cU^{(i)}$.
The proofs of the first two lemmas are deferred to \cref{sec:deferred}.

\begin{lemma}
\label{lem:gamma_necessary_accuracy}
Suppose \cref{as:primal_dual_regularity} holds. Then, $r^{(i)}$ is positive and $\gamma^*\in\cU^{(i)}$ if
\begin{align*}
\norm{\gamma^{(i)} - \gamma^*}_2 \leq \frac{\hat\mu}{4\hat\rho}.
\end{align*}
\end{lemma}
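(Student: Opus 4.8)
The plan is to show that each of the five arguments in the $\min$ defining $r^{(i)}$ in \eqref{eq:r} is positive whenever $\norm{\gamma^{(i)} - \gamma^*}_2 \leq \hat\mu/(4\hat\rho)$, and simultaneously that $\norm{\gamma^{(i)} - \gamma^*}_2 \leq r^{(i)}$ (which gives $\gamma^* \in \cU^{(i)}$). The common mechanism is that each affine-in-$\gamma$ quantity appearing in \eqref{eq:r} (namely $\lambda_{\min}(A(\cdot))$, $\lambda_{\max}(A(\cdot))$, $\norm{B(\cdot)}_F$, and $\norm{\cdot}_2$) is Lipschitz in $\gamma$ with a Lipschitz constant controlled by the bounds in \cref{as:primal_dual_regularity}, and at $\gamma = \gamma^*$ each of these quantities sits strictly inside the relevant safe interval by a margin of at least $\hat\mu/2$ (in the appropriate units).

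Concretely, first I would record the perturbation estimates. Writing $\eta \coloneqq \gamma^{(i)} - \gamma^*$ with $\norm{\eta}_2 \leq \hat\mu/(4\hat\rho)$: by Weyl's inequality $\abs{\lambda_{\min}(A(\gamma^{(i)})) - \lambda_{\min}(A(\gamma^*))} \leq \norm{\sum_i \eta_i A_i}_2 \leq \hat\rho\norm{\eta}_2 \leq \hat\mu/4$, and similarly for $\lambda_{\max}$; also $\norm{B(\gamma^{(i)}) - B(\gamma^*)}_F \leq \norm{\sum_i \eta_i B_i}_F \leq \hat\rho \hat R_p \norm{\eta}_2 \leq \hat\mu\hat R_p/4$; and trivially $\abs{\norm{\gamma^{(i)}}_2 - \norm{\gamma^*}_2} \leq \norm{\eta}_2$. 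Next I would combine these with the values at $\gamma^*$ from \cref{as:primal_dual_regularity}: $\lambda_{\min}(A(\gamma^*)) \geq \hat\mu$, $\lambda_{\max}(A(\gamma^*)) \leq \hat L$, $\norm{B(\gamma^*)}_F \leq \hat L \hat R_p$ (using $X^* = -A(\gamma^*)^{-1}B(\gamma^*)$ as noted before the lemma), and $\norm{\gamma^*}_2 \leq \hat R_d$. This shows $\lambda_{\min}(A(\gamma^{(i)})) \geq \hat\mu - \hat\mu/4 = 3\hat\mu/4 > \hat\mu/2$, so the third term of \eqref{eq:r} is at least $(\hat\mu/4)/\hat\rho = \hat\mu/(4\hat\rho) \geq \norm{\eta}_2$; $\lambda_{\max}(A(\gamma^{(i)})) \leq \hat L + \hat\mu/4 \leq 2\hat L - \hat\mu/4$ (using $\hat\mu \leq \hat L$), so the fourth term is at least $\hat\mu/(4\hat\rho) \geq \norm{\eta}_2$; $\norm{B(\gamma^{(i)})}_F \leq \hat L\hat R_p + \hat\mu \hat R_p/4 \leq 2\hat L\hat R_p - \hat\mu\hat R_p/4$, so the fifth term is at least $\hat\mu/(4\hat\rho) \geq \norm{\eta}_2$; $\norm{\gamma^{(i)}}_2 \leq \hat R_d + \hat\mu/(4\hat\rho)$, so using $\hat\mu/\hat R_d \leq \hat\rho$ (hence $\hat\mu/(4\hat\rho) \leq \hat R_d/4 \leq \hat R_d$) the second term $2\hat R_d - \norm{\gamma^{(i)}}_2 \geq \hat R_d - \hat\mu/(4\hat\rho) \geq \hat\mu/(4\hat\rho) \geq \norm{\eta}_2$; and the first term $\hat\mu/(2\hat\rho) \geq \hat\mu/(4\hat\rho) \geq \norm{\eta}_2$ outright. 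Taking the minimum, $r^{(i)} \geq \hat\mu/(4\hat\rho) > 0$ and $r^{(i)} \geq \norm{\gamma^{(i)} - \gamma^*}_2$, so $\gamma^* \in \mathbb{B}(\gamma^{(i)}, r^{(i)}) = \cU^{(i)}$.

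The main obstacle here is purely bookkeeping: one must be careful about the units (some terms in \eqref{eq:r} are normalized by $\hat R_p$ or not, and the inequality $\hat\mu \leq \hat L$ is needed for the $\lambda_{\max}$ term) and about using the correct defining inequalities from \cref{as:primal_dual_regularity} — in particular that $\hat\rho$ simultaneously upper bounds $\hat\mu/\hat R_d$, $\norm{\sum_i\gamma_i A_i}_2$, and $\norm{\sum_i \gamma_i B_i}_F/\hat R_p$ over the unit sphere. There is no analytic difficulty; the content is entirely in verifying that the chosen threshold $\hat\mu/(4\hat\rho)$ is small enough to keep all five margins positive and to fit inside the resulting ball.
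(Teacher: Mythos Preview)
Your proposal is correct and follows essentially the same approach as the paper: both arguments bound each of the five terms in the $\min$ defining $r^{(i)}$ from below using the Lipschitz-type perturbation estimates on $\lambda_{\min}(A(\cdot))$, $\lambda_{\max}(A(\cdot))$, $\norm{B(\cdot)}_F$, and $\norm{\cdot}_2$ supplied by \cref{as:primal_dual_regularity}, and then conclude $r^{(i)} \geq \norm{\gamma^{(i)}-\gamma^*}_2 > 0$. The only cosmetic difference is that the paper keeps $r \coloneqq \norm{\gamma^{(i)}-\gamma^*}_2$ as a free parameter, shows each term is at least $\tfrac{\hat\mu}{2\hat\rho} - r$ (or larger), and specializes to $r \leq \tfrac{\hat\mu}{4\hat\rho}$ at the end, whereas you substitute the threshold from the start.
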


\begin{lemma}
\label{lem:parameters_for_qmmp}
Suppose \cref{as:primal_dual_regularity} holds and $r^{(i)}$ is positive. Then, $q(\gamma,X)$ and $\cU^{(i)}$ satisfy \cref{as:regularity} with
$\mu = \frac{\hat\mu}{2}$,
$L = 2\hat L$, $R = \hat R_p$,
$D = 2r^{(i)}$, and $H = 2$.
\end{lemma}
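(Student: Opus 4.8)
The plan is to verify, one at a time, the requirements of \cref{as:qmmp,as:regularity} for the claimed parameters, reading each needed inequality off the five terms in the minimum defining $r^{(i)}$ in \eqref{eq:r}. The one recurring tool is that the estimates in \cref{as:primal_dual_regularity} are $1$-homogeneous in $\gamma$: since $\norm{\sum_i \gamma_i A_i}_2 \le \hat\rho$ for all $\gamma \in \bS^{m-1}$, we get $\norm{\sum_i \beta_i A_i}_2 \le \hat\rho\norm{\beta}_2$ for every $\beta \in \R^m$, and likewise $\norm{\sum_i \beta_i B_i}_F \le \hat\rho\hat R_p\norm{\beta}_2$. Applying this with $\beta = \gamma - \gamma^{(i)}$ for $\gamma \in \cU^{(i)} = \mathbb{B}(\gamma^{(i)}, r^{(i)})$, so that $\norm{\beta}_2 \le r^{(i)}$, yields $\norm{A(\gamma) - A(\gamma^{(i)})}_2 \le \hat\rho r^{(i)}$ and $\norm{B(\gamma) - B(\gamma^{(i)})}_F \le \hat\rho\hat R_p r^{(i)}$; these two displaced bounds drive the entire argument.

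First I would check \cref{as:qmmp} with $\mu = \hat\mu/2$ and $L = 2\hat L$. By Weyl's inequality and the first displaced bound, for any $\gamma \in \cU^{(i)}$ we have $\lambda_{\min}(A(\gamma)) \ge \lambda_{\min}(A(\gamma^{(i)})) - \hat\rho r^{(i)}$ and $\lambda_{\max}(A(\gamma)) \le \lambda_{\max}(A(\gamma^{(i)})) + \hat\rho r^{(i)}$. The third and fourth terms in the definition of $r^{(i)}$ are exactly $\tfrac{1}{\hat\rho}(\lambda_{\min}(A(\gamma^{(i)})) - \hat\mu/2)$ and $\tfrac{1}{\hat\rho}(2\hat L - \lambda_{\max}(A(\gamma^{(i)})))$, so they force $\hat\rho r^{(i)} \le \lambda_{\min}(A(\gamma^{(i)})) - \hat\mu/2$ and $\hat\rho r^{(i)} \le 2\hat L - \lambda_{\max}(A(\gamma^{(i)}))$, whence $\tfrac{\hat\mu}{2}I \preceq A(\gamma) \preceq 2\hat L I$. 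Note that positivity of $r^{(i)}$ already forces $A(\gamma^{(i)}) \succ \tfrac{\hat\mu}{2}I \succ 0$, hence $\lambda_{\max}(A(\gamma^{(i)})) > 0$, a fact used below.

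Next I would verify the three quantities named in \cref{as:regularity}. The diameter of $\cU^{(i)}$ is $D = 2r^{(i)}$ by inspection. For the primal scale $R$: with $X_0 = 0$ we have $\grad_2\, q(\gamma, X_0) = B(\gamma)$, so for $\tilde\gamma \in \argmax_{\gamma\in\cU^{(i)}} q(\gamma, X_0)$ the second displaced bound gives $\norm{B(\tilde\gamma)}_F \le \norm{B(\gamma^{(i)})}_F + \hat\rho\hat R_p r^{(i)}$, and the fifth term of $r^{(i)}$ forces $\hat\rho\hat R_p r^{(i)} \le 2\hat L\hat R_p - \norm{B(\gamma^{(i)})}_F$, so $\norm{B(\tilde\gamma)}_F/L \le 2\hat L\hat R_p/(2\hat L) = \hat R_p$, i.e.\ $R = \hat R_p$ is admissible. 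For $H = 2$: with $\mu = \hat\mu/2$ the first term of $r^{(i)}$ gives $r^{(i)} \le \hat\mu/(2\hat\rho)$, hence $\tfrac{D}{\mu}\norm{\sum_i \gamma_i A_i}_2 \le \tfrac{2r^{(i)}\hat\rho}{\hat\mu/2} \le 2$ for all $\gamma \in \bS^{m-1}$; and since $\mu\kappa R = LR = 2\hat L\hat R_p$ while the fourth term together with $\lambda_{\max}(A(\gamma^{(i)})) > 0$ gives $r^{(i)} < 2\hat L/\hat\rho$, we get $\tfrac{D}{\mu\kappa R}\norm{\sum_i \gamma_i B_i}_F \le \tfrac{2r^{(i)}\hat\rho\hat R_p}{2\hat L\hat R_p} < 2$. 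As also $H = 2 \ge 1$, this exhausts the list.

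The proof is pure bookkeeping; the only things to keep straight are matching each of the five terms in $r^{(i)}$ to the precise inequality it is designed to enforce, and remembering that the condition number relevant to \cref{as:regularity} is $\kappa = L/\mu = 4\hat L/\hat\mu$ (not $\hat L/\hat\mu$), so that $\mu\kappa R$ collapses to $LR$. I do not anticipate any genuine obstacle.
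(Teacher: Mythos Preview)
Your proof is correct and follows essentially the same approach as the paper's: both verify \cref{as:qmmp,as:regularity} by matching each required inequality to one of the five terms in the minimum defining $r^{(i)}$. The only (inessential) difference is in the final $H$-bound on $\tfrac{D}{\mu\kappa R}\norm{\sum_i \gamma_i B_i}_F$: you use the fourth term of $r^{(i)}$ together with $\lambda_{\max}(A(\gamma^{(i)}))>0$ to get a strict bound $<2$, whereas the paper uses the first term $r^{(i)}\le \hat\mu/(2\hat\rho)$ (and $\hat\mu\le\hat L$) to get the slightly sharper $\le 1/2$.
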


\begin{lemma}
\label{lem:sdp_to_qmmp_error_parameters}
Suppose \cref{as:primal_dual_regularity} holds, $r^{(i)}$ is positive, and $0<\epsilon\leq 9\hat\rho\hat R_d\hat R_p^2$.
Set $\delta \coloneqq \frac{\hat\mu \epsilon^2}{\left(9\hat\rho\hat R_d \hat R_p\right)^2}$ and $\eta \coloneqq \frac{4\epsilon}{9\hat R_d}$. Suppose $\tilde X\in\R^{(n-k)\times k}$ satisfies
\begin{align*}
Q_{\cU^{(i)}}\left(\tilde X\right) \leq \min_{X\in\R^{(n-k)\times k}} Q_{\cU^{(i)}}(X) + \delta.
\end{align*}
Then,
\begin{itemize}
	\item If $\gamma^*\in\cU^{(i)}$, then $Y(\tilde X)$ is $\eta$-feasible. 
	\item If $Y(\tilde X)$ is $\eta$-feasible, then $Y(\tilde X)$ is $\epsilon$-optimal and $\epsilon$-feasible.
\end{itemize}
\end{lemma}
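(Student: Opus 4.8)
The plan is to treat the two bullets almost independently, leaning on two facts that hold for \emph{every} admissible $\cU^{(i)}$ (in particular \emph{without} assuming $\gamma^*\in\cU^{(i)}$): (i) $Y(\tilde X)\succeq 0$ automatically, since $Y(X)$ is visibly a Gram matrix for every $X$ (see \cref{subsec:Sn_to_quadratic_matrix_function}); and (ii) because $Y^*=Y(X^*)$ is feasible and optimal for \eqref{eq:primal_dual_SDP}, we have $q_i(X^*)=\ip{M_i,Y^*}+d_i=0$ for all $i\in[m]$ and $q_\obj(X^*)=\ip{M_\obj,Y^*}=\Opt_{\eqref{eq:primal_dual_SDP}}$, so $q(\gamma,X^*)=q_\obj(X^*)+\sum_{i=1}^m\gamma_i q_i(X^*)=\Opt_{\eqref{eq:primal_dual_SDP}}$ for \emph{every} $\gamma$, whence $Q_{\cU^{(i)}}(X^*)=\Opt_{\eqref{eq:primal_dual_SDP}}$ and, in particular, $\min_X Q_{\cU^{(i)}}(X)\le\Opt_{\eqref{eq:primal_dual_SDP}}$. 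Combined with the hypothesis on $\tilde X$, this gives the key inequality $Q_{\cU^{(i)}}(\tilde X)\le\Opt_{\eqref{eq:primal_dual_SDP}}+\delta$.

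\emph{First bullet.} Assume $\gamma^*\in\cU^{(i)}$. The computation in the proof of \cref{thm:strong_convex_reform} (which uses only $A(\gamma^*)\succ0$, guaranteed here by \cref{as:primal_dual_regularity}, and $\rank M(\gamma^*)=n-k$) shows $q(\gamma^*,X)=\Opt_{\eqref{eq:primal_dual_SDP}}+\tfrac12\tr((X-X^*)^\intercal A(\gamma^*)(X-X^*))$; since $\gamma^*\in\cU^{(i)}$ and $A(\gamma^*)\succeq\hat\mu I$, this yields $Q_{\cU^{(i)}}(X)\ge\Opt_{\eqref{eq:primal_dual_SDP}}+\tfrac{\hat\mu}{2}\norm{X-X^*}_F^2$ for all $X$. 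Plugging in the key inequality gives $\norm{\tilde X-X^*}_F^2\le 2\delta/\hat\mu$, i.e.\ $\norm{\tilde X-X^*}_F\le\tfrac{\sqrt2\,\epsilon}{9\hat\rho\hat R_d\hat R_p}$ after substituting $\delta$. I would then expand each $q_i$ exactly around $X^*$ and, using $q_i(X^*)=0$, bound $\big(\sum_i q_i(\tilde X)^2\big)^{1/2}=\max_{\gamma\in\bS^{m-1}}\sum_i\gamma_i q_i(\tilde X)$ by $\norm{\sum_i\gamma_i(A_iX^*+B_i)}_F\,\norm{\tilde X-X^*}_F+\tfrac12\norm{\sum_i\gamma_iA_i}_2\norm{\tilde X-X^*}_F^2$. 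The $\hat\rho$-bounds of \cref{as:primal_dual_regularity} together with $\norm{X^*}_F\le\hat R_p$ bound the first prefactor by $2\hat\rho\hat R_p$, and a short computation using $\epsilon\le9\hat\rho\hat R_d\hat R_p^2$ bounds the sum by $\tfrac{(2\sqrt2+1)\epsilon}{9\hat R_d}\le\eta$. Combined with $Y(\tilde X)\succeq0$ this is exactly $\eta$-feasibility.

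\emph{Second bullet.} Assume $Y(\tilde X)$ is $\eta$-feasible. Since $\hat R_d\ge1$ we have $\eta=\tfrac{4\epsilon}{9\hat R_d}\le\epsilon$, so $Y(\tilde X)$ is $\epsilon$-feasible as well (and PSD). For $\epsilon$-optimality, fix any $\gamma\in\cU^{(i)}$ and write $q_\obj(\tilde X)=q(\gamma,\tilde X)-\sum_i\gamma_i q_i(\tilde X)\le Q_{\cU^{(i)}}(\tilde X)+\norm{\gamma}_2\big(\sum_i q_i(\tilde X)^2\big)^{1/2}$. The key inequality bounds the first term by $\Opt_{\eqref{eq:primal_dual_SDP}}+\delta$; the term $2\hat R_d-\norm{\gamma^{(i)}}_2$ appearing in \eqref{eq:r} (which is positive since $r^{(i)}>0$) forces $\norm{\gamma}_2\le2\hat R_d$ for every $\gamma\in\cU^{(i)}$; and $\eta$-feasibility bounds the last factor by $\eta$. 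Hence $q_\obj(\tilde X)\le\Opt_{\eqref{eq:primal_dual_SDP}}+\delta+2\hat R_d\eta$, and it remains only to check $\delta+2\hat R_d\eta\le\epsilon$: one has $2\hat R_d\eta=\tfrac{8\epsilon}{9}$, while $\delta\le\tfrac{\epsilon}{9}$ follows from $\epsilon\le9\hat\rho\hat R_d\hat R_p^2$ and $\hat\rho\hat R_d\ge\hat\mu$ (the first bound in \cref{as:primal_dual_regularity}).

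I expect the conceptual crux to be the second bullet: it may not invoke $\gamma^*\in\cU^{(i)}$ — detecting exactly that is the purpose of the whole construction — so \cref{thm:strong_convex_reform} is unavailable there, and the substitute is the unconditional identity $Q_{\cU^{(i)}}(X^*)=\Opt_{\eqref{eq:primal_dual_SDP}}$ plus a one-line perturbation in the $\gamma$ variable. In the first bullet the only delicate point is constant bookkeeping, namely that $2\sqrt2+1\le4$ and that the strong-convexity modulus one actually gets is $\hat\mu$ (from $A(\gamma^*)\succeq\hat\mu I$), not the weaker uniform lower bound $\tfrac{\hat\mu}{2}$ on $\cU^{(i)}$.
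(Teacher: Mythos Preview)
Your proposal is correct and follows essentially the same route as the paper: the first bullet uses $A(\gamma^*)\succeq\hat\mu I$ (not the weaker uniform bound $\hat\mu/2$) to get $\norm{\tilde X-X^*}_F^2\le 2\delta/\hat\mu$, then expands each $q_i$ around $X^*$ and bounds the residual via the $\hat\rho$-estimates exactly as the paper does; the second bullet proceeds via $q_\obj(\tilde X)=q(\gamma,\tilde X)-\sum_i\gamma_iq_i(\tilde X)$, the bound $\norm{\gamma}_2\le 2\hat R_d$ from the definition of $r^{(i)}$, and the arithmetic $\delta+2\hat R_d\eta\le\epsilon$. Your write-up is in fact slightly more explicit than the paper's at the one genuinely subtle step --- that $\min_X Q_{\cU^{(i)}}(X)\le Q_{\cU^{(i)}}(X^*)=\Opt_{\eqref{eq:primal_dual_SDP}}$ holds \emph{unconditionally}, without assuming $\gamma^*\in\cU^{(i)}$ --- which the paper uses but does not spell out.
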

\begin{proof}
Suppose $\gamma^*\in\cU^{(i)}$ and define $\Delta \coloneqq \tilde X - X^*$. Strong convexity and \cref{thm:strong_convex_reform} imply
$\frac{\hat\mu}{2}\norm{\Delta}_F^2 \leq \delta$.
Next, recalling that $q_i(X^*) = 0$ for all $i\in[m]$, we deduce
\begin{align*}
\left(\sum_{i=1}^m \left(\ip{M_i, Y(\tilde X)} + d_i\right)^2\right)^{1/2} &= \left(\sum_{i=1}^m q_i(\tilde X)^2\right)^{1/2}\\
&= \max_{\norm{\gamma}_2 = 1} \sum_{i=1}^m \gamma_i\left(\frac{\tr\left(\Delta^\intercal A_i \Delta\right)}{2} + \ip{A_iX^* + B_i, \Delta}\right)\\
&\leq \hat\rho\left(\frac{\delta}{\hat\mu}\right) + \sqrt{8}\hat\rho\hat R_p\sqrt{\frac{\delta}{\hat\mu}}\\
&\leq \frac{4\hat\rho\hat R_p}{\sqrt{\hat\mu}} \sqrt{\delta} = \eta.
\end{align*}
Here, the first inequality follows from $\norm{\Delta}_F^2 \leq \frac{2\delta}{\hat\mu}$ and \cref{as:primal_dual_regularity}, and the last inequality follows as $\delta = \frac{\hat\mu\epsilon^2}{\left(9\hat\rho\hat R_d\hat R_p\right)^2} \leq \hat\mu\hat R_p^2$ since $0<\epsilon\leq 9\hat\rho\hat R_d\hat R_p^2$.

Now, suppose $Y(\tilde X)$ is $\eta$-feasible. Note that $\eta\leq \epsilon$ (as $\hat R_d\geq 1$) and thus $Y(\tilde X)$ is immediately $\epsilon$-feasible. Let $\tilde\gamma \in\argmax_{\gamma\in \cU^{(i)}} q(\gamma, \tilde X)$ so that $Q_{\cU^{(i)}}(\tilde X) = q(\tilde \gamma, \tilde X)$. Then,
\begin{align*}
\ip{M_\obj, Y(\tilde X)} = q_\obj(\tilde X) &= q(\tilde \gamma, \tilde X) - \sum_{i=1}^m \tilde\gamma_i q_i(\tilde X)\\
&\leq Q_{\cU^{(i)}}(\tilde X) + \norm{\tilde \gamma}_2\eta\\
&\leq \left(\min_{X\in\R^{(n-k)\times k}}Q_{\cU^{(i)}}(X) + \delta \right)+ 2\hat R_d \eta\\
&\leq \Opt_{\eqref{eq:primal_dual_SDP}} + \left(\delta + 2 \hat R_d \eta\right),
\end{align*}
where the first inequality follows from the $\eta$-feasibility of $Y(\tilde X)$, the second inequality from the premise of the lemma on $\tilde X$ and the fact that $\norm{\tilde\gamma}_2\leq \norm{\tilde\gamma-\gamma^{(i)}}_2+\norm{\gamma^{(i)}}_2\leq2\hat R_d$ (this holds because $\tilde\gamma\in\cU^{(i)}$, $\cU^{(i)}$ is the $\ell_2$-ball of radius $r^{(i)}$ centered at $\gamma^{(i)}$,  and by definition of $r^{(i)}$ we have $r^{(i)}\leq 2\hat R_d -\norm{\gamma^{(i)}}_2$). 
We may then use the definitions of $\delta$ and $\eta$ to bound
\begin{align*}
\delta + 2 \hat R_d \eta &= \frac{\hat\mu\epsilon^2}{\left(9\hat\rho\hat R_d\hat R_p\right)^2} + \frac{8\epsilon}{9} \leq \frac{\hat\mu\epsilon}{9\hat\rho\hat R_d} + \frac{8\epsilon}{9}\leq \epsilon.
\end{align*}
Here, the first inequality follows from the upper bound on $\epsilon$ and the second one follows from $\frac{\hat\mu}{\hat R_d}\leq\hat\rho$ (\cref{as:primal_dual_regularity}). This then shows that $Y(\tilde X)$ is $\epsilon$-optimal.
\end{proof}

We are now ready to present our full algorithm for computing approximate solutions to \eqref{eq:primal_dual_SDP}. CertSDP (\cref{alg:exact_sdp_qmmp}) assumes access to a sequence $\gamma^{(i)}\to \gamma^*$ and applies a guess-and-double scheme to guess when $\norm{\gamma^{(i)} - \gamma^*}_2$ is sufficiently small. It then applies \cref{alg:cautious-agd} to either compute an $\epsilon$-optimal $\epsilon$-feasible solution $Y(\tilde X)$ or to declare that $\gamma^*\notin \cU^{(i)}$.
\begin{algorithm}
\caption{CertSDP}
\label{alg:exact_sdp_qmmp}
Given a rank-$k$ exact QMP-like SDP satisfying \cref{as:primal_dual_regularity}, a sequence $\gamma^{(1)},\gamma^{(2)},\dots \to \gamma^*$, and $0<\epsilon\leq9\hat\rho\hat R_d\hat R_p^2$
	{\small\begin{enumerate}[topsep=0pt,itemsep=0pt,parsep=0pt]
		\item Set $\delta$ and $\eta$ as in \cref{lem:sdp_to_qmmp_error_parameters}
		\item For each $i = 2^0,\,2^1,\,2^2,\dots$
		\begin{itemize}[topsep=0pt,itemsep=0pt,parsep=0pt]
			\item If $r^{(i)}>0$
			\begin{enumerate}[topsep=0pt,itemsep=0pt,parsep=0pt]
				\item Let $\cU^{(i)}\coloneqq \mathbb{B}\left(\gamma^{(i)}, r^{(i)}\right)$ and compute $\tilde X$ satisfying
				\begin{align*}
				Q_{\cU^{(i)}}(\tilde X) \leq \min_{X\in\R^{(n-k)\times k}} Q_{\cU^{(i)}}(X) + \delta
				\end{align*}
				using \cref{alg:cautious-agd}
				\item If $Y(\tilde X)$ is $\eta$-feasible, output $Y(\tilde X)$
			\end{enumerate}
		\end{itemize}
	\end{enumerate}}
\end{algorithm}

The next theorem gives rigorous guarantees on CertSDP and follows from \cref{lem:parameters_for_qmmp,lem:gamma_necessary_accuracy,lem:sdp_to_qmmp_error_parameters,thm:overall_qmmp_iterations}.
\begin{theorem}
\label{thm:overall_exact_sdp_qmmp}
Suppose \eqref{eq:primal_dual_SDP} is a rank-$k$ exact QMP-like SDP satisfying \cref{as:primal_dual_regularity}, $\gamma^{(1)},\,\gamma^{(2)},\,\dots\to \gamma^*$ and $0<\epsilon\leq 9\hat\rho\hat R_d\hat R_p^2$.
Let $T$ be such that $\norm{\gamma^{(t)}- \gamma^*}_2 \leq \frac{\hat\mu}{4\hat\rho}$ for all $t\geq T$. Then, CertSDP (\cref{alg:exact_sdp_qmmp}) accesses at most $2T$ iterates of the sequence $\gamma^{(i)}$ and outputs an $\epsilon$-optimal and $\epsilon$-feasible solution in
\begin{gather*}
O\left(\sqrt{\hat\kappa} \log\left(\frac{\hat\kappa\hat\rho\hat R_p\hat R_d}{\epsilon}\right) \cdot\log\left(T\right)\right)\text{ prox-map calls, and}\\
O\left(\frac{\hat\kappa^{7/4}\hat\rho\hat R_p^2\hat R_d}{\epsilon} \cdot \log(T)\right)\text{iterations within all prox-map calls.}
\end{gather*}
\end{theorem}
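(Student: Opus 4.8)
The plan is to derive the statement as a composition of the earlier results by establishing three facts and then combining them: (i) \emph{every} output of CertSDP is $\epsilon$-optimal and $\epsilon$-feasible, regardless of the round at which it is produced; (ii) CertSDP is guaranteed to terminate at a round $i\leq 2T$, hence accesses only $\gamma^{(1)},\dots,\gamma^{(2T)}$; and (iii) each executed round incurs a uniformly bounded amount of work, so the stated totals follow after multiplying by the $O(\log T)$ executed rounds.

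For (i), I would argue that whenever CertSDP outputs $Y(\tilde X)$ at some round $i$, the feasibility test passed, so $Y(\tilde X)$ is $\eta$-feasible; moreover $\tilde X$ was computed to satisfy $Q_{\cU^{(i)}}(\tilde X)\leq \min_X Q_{\cU^{(i)}}(X)+\delta$, and the hypotheses $r^{(i)}>0$ and $0<\epsilon\leq 9\hat\rho\hat R_d\hat R_p^2$ hold. Applying the second bullet of \cref{lem:sdp_to_qmmp_error_parameters} then shows $Y(\tilde X)$ is $\epsilon$-optimal and $\epsilon$-feasible. I would note explicitly that this direction does not require $\gamma^*\in\cU^{(i)}$: it only uses $\min_X Q_{\cU^{(i)}}(X)\leq q(\gamma^*,X^*)=\Opt_{\eqref{eq:primal_dual_SDP}}$, which holds because $Y^*$ is feasible for \eqref{eq:primal_dual_SDP}. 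For (ii), by hypothesis $\norm{\gamma^{(i)}-\gamma^*}_2\leq \hat\mu/(4\hat\rho)$ for all $i\geq T$, so \cref{lem:gamma_necessary_accuracy} gives $r^{(i)}>0$ and $\gamma^*\in\cU^{(i)}$ for such $i$, and then the first bullet of \cref{lem:sdp_to_qmmp_error_parameters} guarantees that the $\tilde X$ computed at round $i$ yields an $\eta$-feasible $Y(\tilde X)$, triggering output. Since the outer index of CertSDP runs over $2^0,2^1,2^2,\dots$, it halts no later than the first power of two that is $\geq T$, which is at most $2T$; in particular only $O(\log T)$ rounds are executed.

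For (iii), I would fix an executed round $i$ (so $r^{(i)}>0$) and invoke \cref{lem:parameters_for_qmmp} to see that $q(\gamma,X)$ and $\cU^{(i)}$ satisfy \cref{as:qmmp,as:regularity} with $\mu=\hat\mu/2$, $L=2\hat L$, $R=\hat R_p$, $H=2$ (and $D=2r^{(i)}$), so the QMMP condition number is $\kappa=L/\mu=4\hat\kappa$. Taking $\textup{gap}_0=\mu\kappa^2R^2/2$ as prescribed, \cref{thm:overall_qmmp_iterations} gives that \cref{alg:cautious-agd} produces $\tilde X$ with $Q_{\cU^{(i)}}(\tilde X)-\min_X Q_{\cU^{(i)}}(X)\leq \delta$ after $O(\sqrt\kappa\log(\mu\kappa^2R^2/\delta))$ prox-map calls and $O(\kappa^{5/4}HR\sqrt L/\sqrt\delta)$ total inner iterations. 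Substituting $\delta=\hat\mu\epsilon^2/(9\hat\rho\hat R_d\hat R_p)^2$ from \cref{lem:sdp_to_qmmp_error_parameters} together with the parameter values above, the argument of the logarithm becomes $\Theta(\hat\kappa^2\hat\rho^2\hat R_d^2\hat R_p^4/\epsilon^2)$, so the per-round cost simplifies to $O(\sqrt{\hat\kappa}\log(\hat\kappa\hat\rho\hat R_p\hat R_d/\epsilon))$ prox-map calls and $O(\hat\kappa^{7/4}\hat\rho\hat R_p^2\hat R_d/\epsilon)$ inner iterations, independent of $i$. Multiplying by the $O(\log T)$ executed rounds yields the two claimed totals, and combining with (i)--(ii) finishes the proof.

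The argument has no deep obstacle; the point requiring the most care is making the per-round complexity uniform in $i$. This works precisely because \cref{lem:parameters_for_qmmp} delivers $i$-independent values of $\mu,L,R,H$, whereas the $i$-dependent radius $r^{(i)}$ enters only through the diameter $D=2r^{(i)}$ of $\cU^{(i)}$, which cancels out of the bound of \cref{thm:overall_qmmp_iterations} (the factor $D^{-1}$ in $\max_{\gamma\in\bS^{m-1}}\norm{\cG\gamma}_F$ from \cref{lem:grad_hessian_psi} being balanced against $\max_{\gamma\in\cU^{(i)}}\norm{\gamma-\gamma_0}_2\leq D$). The remainder is bookkeeping through the parameter substitutions and the simplification of the logarithms.
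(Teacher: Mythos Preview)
Your proposal is correct and follows exactly the route the paper intends: the paper states only that the theorem ``follows from \cref{lem:parameters_for_qmmp,lem:gamma_necessary_accuracy,lem:sdp_to_qmmp_error_parameters,thm:overall_qmmp_iterations}'' without giving further details, and you have supplied precisely those details. Your observation that the second bullet of \cref{lem:sdp_to_qmmp_error_parameters} does not require $\gamma^*\in\cU^{(i)}$ (because $q_i(X^*)=0$ makes $Q_{\cU^{(i)}}(X^*)=\Opt_{\eqref{eq:primal_dual_SDP}}$ regardless) and your remark that the per-round cost is uniform in $i$ because $D=2r^{(i)}$ cancels out of the bound in \cref{thm:overall_qmmp_iterations} are both exactly the points one needs to make the composition go through.
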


We present \cref{thm:overall_exact_sdp_qmmp} in its current form to emphasize that it can be combined with any existing algorithm that produces an estimate for $\gamma^*$. For example, we could combine CertSDP with the subgradient method employed in \cite{ding2019optimal} for solving the penalized version of the dual SDP. Alternatively, CertSDP could also be combined with augmented-Lagrangian-based algorithms such as~\cite{yurtsever2021scalable,burer2003nonlinear} or as a postprocessing step on a general SDP solver that produces a sufficiently accurate dual solution.

The following corollary combines CertSDP with the algorithm suggested in \cite[Section 6.2.2]{ding2019optimal} for approximating $\gamma^*$.
\begin{corollary}
    Suppose \eqref{eq:primal_dual_SDP} is a rank-$k$ exact QMP-like SDP satisfying \cref{as:primal_dual_regularity} and $0<\epsilon\leq 9\hat\rho\hat R_d\hat R_p^2$. Furthermore, suppose $\gamma^*$ is unique.
    Then, CertSDP (\cref{alg:exact_sdp_qmmp}) combined with \cite[Section 6.2.2]{ding2019optimal} outputs an $\epsilon$-optimal and $\epsilon$-feasible solution in
    \begin{gather*}
    O\left(\frac{\hat\rho^2}{\hat\mu^2}\cdot \frac{\hat L\hat R_d+ \Opt_{\eqref{eq:primal_dual_SDP}}}{\lambda_{\min>0}(Y^*)\sigma_{\min}(\cD)^2}\right)\text{ dual iterations},\\
    \tilde O\left(\sqrt{\hat\kappa} \log\left(\frac{\hat\kappa\hat\rho\hat R_p\hat R_d}{\epsilon}\right)\right)\text{ prox-map calls, and}\\
    \tilde O\left(\frac{\hat\kappa^{7/4}\hat\rho\hat R_p^2\hat R_d}{\epsilon}\right)\text{iterations within all prox-map calls.}
    \end{gather*}
    Here, $\lambda_{\min>0}(Y^*)$ is the magnitude of the smallest nonzero eigenvalue of $Y^*$ and $\sigma_{\min}(\cD)$ is the minimum singular value of an operator $\cD$ (see \cite[Lemma 3.1]{ding2019optimal}) and is guaranteed to be positive.
\end{corollary} 

\section{Numerical experiments}
\label{sec:numerical}

We investigate the numerical performance of our new FOM, CertSDP, on rank-$k$ exact QMP-like SDPs that are both large and sparse.
Similar experiments on SDPs inspired by the phase retrieval problem are summarized in \cref{subsec:additional} and presented in detail in \cref{sec:additional_exp}.

In this section, we consider random instances of distance-minimization QMPs and their primal and dual SDP relaxations of the form
\begin{align}
\label{eq:dist_min_qmp}
&\inf_{X\in\R^{(n-k)\times k}}\set{\frac{\norm{X}_F^2}{2}:\, \begin{array}
	{l}
	\tr\left(\frac{X^\intercal A_i X}{2}\right) + \ip{B_i, X} + c_i = 0,\,\forall i\in[m]
\end{array}}\\
&\qquad\geq \inf_{Y\in\S^n}\set{\ip{\begin{pmatrix}
	I_{n-k}/2 &\\& 0_k
\end{pmatrix}, Y}:\, \begin{array}
	{l}
	\ip{\begin{pmatrix}
	A_i/2 & B_i/2\\ B_i^\intercal /2 & \tfrac{c_i}{k}I_k 	
	\end{pmatrix}, Y} = 0,\,\forall i\in[m]\\
	Y = \begin{pmatrix}
		* & *\\
		* & I_k
	\end{pmatrix}\succeq 0
\end{array}}\nonumber\\
&\qquad\geq \sup_{\gamma\in\R^m,\, T\in\S^k}\set{\tr(T):\, \begin{pmatrix}
	A(\gamma)/2 & B(\gamma)/2\\
	B(\gamma)^\intercal / 2 & \frac{c(\gamma)}{k}I_k - T
\end{pmatrix}\succeq 0}.\nonumber
\end{align}
In our instance generation procedure, we ensure that equality holds throughout this chain of inequalities.

We will compare the performance of CertSDP on instances of \eqref{eq:dist_min_qmp} to that of several first-order methods from the literature: the complementary slackness SDP algorithm (CSSDP)~\cite{ding2021optimal}, 
SketchyCGAL~\cite{yurtsever2021scalable}, ProxSDP~\cite{souto2020exploiting}, and the splitting cone solver (SCS)~\cite{odonoghue2016conic}.
In addition to these convex-optimization-based algorithms, we also compare our results with the nonconvex Burer-Monteiro method~\cite{burer2003nonlinear}.
We discuss these algorithms and relevant implementation details in \cref{subsec:implementation_algs} and the instance generation procedure in \cref{subsec:random_instances} before presenting the numerical results in \cref{subsec:numerical_results}.

\cref{subsec:additional} includes a summary of additional experiments inspired by the phase retrieval problem~\cite{candes2015phase}. We defer the details of these experiments and their implementation details to \cref{sec:additional_exp}.

All algorithms and experiments are implemented in Julia and run on a machine with an AMD Opteron 4184 processor with 12 CPUs and 70GB of RAM. Our code is publicly available at:
\begin{center}
\url{https://github.com/alexlihengwang/CertSDP}
\end{center}

\begin{remark}
    \label{rem:virtual_size_memory}
    For all of our experiments, we track the memory consumption of each algorithm by monitoring the virtual memory size (\texttt{vsz}) of the process throughout the run of the algorithm and report the difference between the maximum value and the starting value.
    This is the same measurement that is performed in \cite{yurtsever2021scalable}.
    We caution that this number should only be treated as a \emph{very rough} estimate of the storage requirements. Indeed, virtual memory need not be allocated at all for small enough programs (so that some algorithms register as using no memory at all for small enough values of $n-k$) and furthermore, when it is allocated, it is not always fully used. Experimentally, we found that on our machine, storage of up to $\approx 1.0$ MB was often measured as not using any memory at all.
    We report such measurements as 0.0 MB in our tables (\cref{table:1e3,table:1e4,table:1e5}) and as 1.0 MB in our log-scale plot~\cref{fig:memory_usage}.
    \ifjelse{\qed}{}
\end{remark}

\subsection{Implementation details}
\label{subsec:implementation_algs}

\paragraph{CertSDP.}
We implemented CertSDP (\cref{alg:exact_sdp_qmmp}) as presented in this paper except a few modifications. In addition to simplifying the overall algorithm, these modifications enable CertSDP to be run \emph{without} knowledge of the parameters $\hat\mu$ and $\hat L$. While the convergence guarantees of \cref{thm:overall_exact_sdp_qmmp} may no longer hold, we find empirically that CertSDP continues to perform very effectively with these modifications.
\begin{itemize}
	\item We instantiate CertSDP with Accelegrad~\cite{levy2018online} as the iterative method for producing iterates $\gamma^{(i)}$. As in \cite{ding2021optimal}, we apply Accelegrad to the penalized dual problem
	\begin{align*}
	\max_{\gamma\in\R^m,\, T\in\S^k}\tr\left(T\right) + \text{penalty}\cdot\min\left(0,\,  \lambda_{\min}\begin{pmatrix}
		A(\gamma)/2 & B(\gamma)/2\\
		B(\gamma)^\intercal/2 & \frac{c(\gamma)}{k}I_k - T
	\end{pmatrix}\right)
	\end{align*} 
	for some large value for the penalty parameter. It can be shown that the optimal value and optimizers of this penalized dual problem coincide with that of the dual SDP whenever the penalty parameter is larger than $\tr(Y^*)$; see \cite{ding2021optimal}. In our experiments, we set the penalty parameter to be $20\cdot\tr(Y^*)$.
	\item In practice, it is extremely cheap to solve \eqref{eq:strong_convex_reform} even to high accuracy. Thus, we replace the guess-and-double scheme in \cref{alg:exact_sdp_qmmp} with a linear schedule after
	a fixed number of iterations. Specifically, we solve \eqref{eq:strong_convex_reform} on iterations 1, 2, 4, 8, \dots, 256. After that, we solve \eqref{eq:strong_convex_reform} once every 256 iterations.
    Additionally, we replace the excessive gap technique used in \cref{thm:overall_qmmp_iterations} with accelerated gradient descent (see \cref{rem:prox_map_agd}).
	\item We set
	\begin{align*}
	 r^{(i)} = \frac{1}{\hat\rho}\cdot\frac{\lambda_{\max}(A(\gamma^{(i)}))\lambda_{\min}(A(\gamma^{(i)}))}{2\lambda_{\max}(A(\gamma^{(i)})) + \lambda_{\min}(A(\gamma^{(i)}))}
	\end{align*}
	if $A(\gamma^{(i)})\succ 0$, and $r^{(i)} = 0$ else. Equivalently, $\cU^{(i)}\coloneqq \mathbb{B}(\gamma^{(i)},r^{(i)})$ is the largest ball centered at $\gamma^{(i)}$ for which the condition number of $A(\gamma)$ for any $\gamma\in\mathbb{B}(\gamma^{(i)},r^{(i)})$ is guaranteed to be at most twice the condition number of $A(\gamma^{(i)})$. To see this, 
	recall that $\hat\rho$ satisfies $\hat\rho \ge \max_{\gamma'\in\bS^{m-1}}\norm{\sum_{i=1}^m \gamma'_iA_i}_2$.
	Then, for any $\gamma\in \mathbb{B}(\gamma^{(i)}, r^{(i)})$, we have
    \begin{align*}
        \lambda_{\min}(A(\gamma)) &\geq \lambda_{\min}(A(\gamma^{(i)})) - \hat\rho \cdot r^{(i)}\\
        &= \lambda_{\min}(A(\gamma^{(i)})) \frac{\lambda_{\min}(A(\gamma^{(i)})) + \lambda_{\max}(A(\gamma^{(i)}))}{\lambda_{\min}(A(\gamma^{(i)})) + 2 \lambda_{\max}(A(\gamma^{(i)}))} \eqqcolon \hat\mu^{(i)}.
    \end{align*}
    Similarly, one can verify that
    \begin{align*}
        \lambda_{\max}(A(\gamma)) \leq 2\lambda_{\max}(A(\gamma^{(i)})) \frac{\lambda_{\min}(A(\gamma^{(i)})) + \lambda_{\max}(A(\gamma^{(i)}))}{\lambda_{\min}(A(\gamma^{(i)})) + 2 \lambda_{\max}(A(\gamma^{(i)}))}\eqqcolon \hat L^{(i)}.
    \end{align*}
    Thus, $\hat\mu^{(i)}$ and $\hat L^{(i)}$ serve as heuristic choices of $\hat\mu$ and $\hat L$. Furthermore, $\frac{\hat L^{(i)}}{\hat \mu^{(i)}} \leq 2 \frac{\lambda_{\max}(A(\gamma^{(i)}))}{\lambda_{\min}(A(\gamma^{(i)}))}$.

	Note that it still holds that $A(\gamma)\succ 0$ for all $\gamma\in\cU^{(i)}$ (as long as $r^{(i)}$ is positive) and that $\gamma^*\in\cU^{(i)}$ for all $\gamma^{(i)}$ close enough to $\gamma^*$.
	\item In CautiousAGD (\cref{alg:cautious-agd}), we terminate early if $\max_{i\in[m]}\abs{q_i(X_t)}$ does not decrease to zero geometrically. Indeed, this can only happen if $\gamma^*\notin\cU^{(i)}$.
	\item \cref{thm:oracle_cost} gives an \textit{a priori} guarantee on the number of inner iterations required for solving each prox-map. Instead of using this number of iterations, in our code, we will monitor the saddle point gap, i.e., the second term in \eqref{eq:saddle_point_error}, and break as soon as the saddle point gap is small enough.
	\item We warm-start the iterate $X$ in CautiousAGD using the last iterate of the previous run of CautiousAGD and warm-start $\gamma$ in the prox-map computation using the last iterate of the previous run of the prox-map computation.
	\item Unless the time limit is met first, the overall algorithm is terminated once CautiousAGD produces a $\left(10^{-13}\right)$-optimal solution of \eqref{eq:strong_convex_reform} that satisfies $\max_{i\in[m]}\abs{q_i(X_t)}\leq 10^{-13}$.
\end{itemize}

\paragraph{CSSDP.} The complementary slackness SDP algorithm (CSSDP)~\cite{ding2021optimal} similarly constructs a sequence of iterates $\gamma^{(i)}\to\gamma^*$ and occasionally solves a compressed $k$-dimensional SDP~\cite[MinFeasSDP]{ding2021optimal} in the vector space corresponding to the $k$-many minimum eigenvalues of the slack matrix $M(\gamma^{(i)})$.
As in our implementation of CertSDP, we instantiate CSSDP with Accelegrad~\cite{levy2018online} as the iterative method for producing iterates $\gamma^{(i)}$ and solve the compressed SDP at iterations $1$, $2$, $4$, $\dots$, $256$ and then once every $256$ iterations thereafter. The compressed SDPs are solved using SCS solver with all error parameters set to $10^{-13}$.
Since CSSDP needs to solve the compressed SDP frequently, we make sure to instantiate the optimization problem just once in order to amortize the cost of allocating the $k\by k$ symmetric matrix variable.

\paragraph{SketchyCGAL.} \citet{yurtsever2021scalable} observe that one may track any \emph{linear image} of the primal matrix iterates (as opposed to the matrix iterate itself) in the CGAL~\cite{yurtsever2019conditional} algorithm. Combining this observation with the Nystr\"om sketch gives SketchyCGAL. For \eqref{eq:dist_min_qmp}, we implement a variant of this idea, where we replace the Nystr\"om sketch with the linear map sending a matrix in $\S^n$ to its top-right $(n-k)\times k$ submatrix.

\paragraph{ProxSDP and SCS.} ProxSDP~\cite{souto2020exploiting} and the splitting cone solver (SCS)~\cite{odonoghue2016conic} are FOMs that can be used to tackle large-scale SDPs.
ProxSDP combines the primal-dual hybrid gradient method with an approximate projection operation that allows it to replace a full eigendecomposition with a partial one whenever the rank of the true SDP solution is small.
SCS employs an FOM to tackle the homogeneous self-dual embedding but does not explicitly take advantage of possible low rank solutions.

In our experiments, we pass the SDP relaxations of our QMPs to the corresponding Julia interfaces \
\href{https://github.com/mariohsouto/ProxSDP.jl}{\texttt{ProxSDP.jl}}
and
\href{https://github.com/jump-dev/SCS.jl}{\texttt{SCS.jl}}
with all error parameters set to $10^{-13}$.
In contrast to CertSDP, CSSDP, and SketchyCGAL, which achieve storage optimality, ProxSDP and SCS both store matrix iterates and thus require substantially more memory.

\paragraph{Burer-Monteiro.}
We implement the Burer-Monteiro method as outlined originally in~\cite{burer2003nonlinear}. This method is naturally storage-optimal but may fail (at least theoretically) to converge even on 1-exact QMP-like SDPs (see \cref{subsec:bm_comparison}).

\subsection{Random instance generation}
\label{subsec:random_instances}
We generate random sparse instances of distance-minimization QMPs~\eqref{eq:dist_min_qmp} as follows: Let $(n,k,m,\mu^*, \texttt{nnz})$ be input parameters. Here, $(n,k,m)$ control the size of \eqref{eq:dist_min_qmp}, $\mu^*$ is the desired value of $\lambda_{\min}(A(\gamma^*))$ and \texttt{nnz} approximately controls the number of nonzero entries in each $A_1,\dots,A_m$.
\begin{itemize}
	\item Let $A_1,\dots, A_m\in\S^{n-k}$ be sparse matrices each with $\approx \texttt{nnz}$ nonzero entries that are i.i.d.\ normal. We scale $A_1,\dots,A_m$ such that $\norm{A_i}_2  = 1$ for all $i\in[m]$.
	\item Let $B_1,\dots,B_m\in\R^{(n-k)\times k}$ be matrices where all entries are i.i.d.\ normal. We scale $B_1,\dots,B_m$ such that $\norm{B_i}_F = 1$ for all $i\in[m]$.
	\item Pick a direction $\hat\gamma$ uniformly from the surface of the sphere $\bS^{m-1}$, then set $\gamma^* \coloneqq r\hat\gamma$ where $r>0$ solves $\lambda_{\min}\left(A(\gamma^*)\right) = 1 + r \lambda_{\min}\left(\sum_{i=1}^m \hat\gamma_i A_i\right) = \mu^*$.
	Let $X^*\coloneqq - A(\gamma^*)^{-1}B(\gamma^*)$.
	\item Finally, for each $i\in[m]$, set $c_i$ such that
	$\tr\left(\frac{X^{*\intercal} A_i X^*}{2}\right) + \ip{B_i, X^*} + c_i = 0$.
\end{itemize}
Exactness is guaranteed to hold throughout \eqref{eq:dist_min_qmp} as $(\gamma^*,\,T^*)$, where
\begin{align*}
T^* \coloneqq \frac{c(\gamma^*)}{k}I_k - \frac{B(\gamma^*)^\intercal A(\gamma^*)^{-1}B(\gamma^*)}{2},
\end{align*}
achieves the value $\frac{\norm{X^*}_F^2}{2}$ in the third line of \eqref{eq:dist_min_qmp} (see \cref{lem:correctness_of_random_generation} in \cref{sec:deferred}).

\subsection{Numerical results}
\label{subsec:numerical_results}

To investigate the scalability of CertSDP in terms of $n$, we fix $k = 10$, $m = 10$, $\mu^* = 0.1$ and $\texttt{nnz} = n$.
Note that in this regime, the $A_i$ matrices are each individually very sparse with approximately one nonzero entry per row or column.
We then vary $n$ such that the height of the matrix variable $X\in\R^{(n-k)\times k}$, i.e., $n-k$, takes the values $10^3,\, 10^4,\, 10^5$. For each value of $n-k$, we generate 10 random instances of \eqref{eq:dist_min_qmp} according to \cref{subsec:random_instances} and measure the time, error, and memory consumption of the tested algorithms.

We ran each algorithm with time limits of $3\times 10^3$, $10^4$, and $5\times 10^4$ seconds for $n-k = 10^3$, $10^4$, $ 10^5$ respectively.
SCS is not tested for $n-k = 10^4$ as it was unable to complete a single iteration within the time limits and utilized over 70GB of memory.
Similarly, ProxSDP and SCS were not tested for $n-k = 10^5$ as both came to complete failures due to excessive memory allocation.

Detailed numerical results are reported in \cref{table:1e3,table:1e4,table:1e5} for $n-k = 10^3$, $10^4$, and $10^5$, respectively.
\cref{fig:memory_usage} shows the average memory usage of the algorithms.
We compare the convergence behavior of CertSDP with that of CSSDP and SketchyCGAL on a single instance of each size in \cref{fig:convergence_behavior}.
The plots on the left in \cref{fig:convergence_behavior} show the primal squared distance $\norm{X - X^*}_F^2$
and the dual suboptimality
\begin{align*}
\Opt_{\eqref{eq:dist_min_qmp}} - \left(\tr\left(T\right) + \text{penalty}\cdot\min\left(0,\,  \lambda_{\min}\begin{pmatrix}
		A(\gamma)/2 & B(\gamma)/2\\
		B(\gamma)^\intercal/2 & \frac{c(\gamma)}{k}I_k - T
	\end{pmatrix}\right)\right)
\end{align*}
for the iterates produced by CertSDP, CSSDP, and SketchyCGAL as a function of time.
The plots on the right of \cref{fig:convergence_behavior} show the primal squared distance for the iterates produced by CertSDP within the final call to CautiousAGD.

\begin{table}[htbp]
\centering
\begin{tabular}{lllllll}
\toprule 
Algorithm & time (s) & std. & $\norm{X - X^*}_F^2$ & std. & memory (MB) & std. \\
\midrule 
CertSDP & \num{1.3e+03} & \num{7.6e+02} & \num{1.9e-22} & \num{4.2e-23} & \num{0.0e+00} & \num{0.0e+00} \\
CSSDP & \num{3.0e+03} & \num{5.8e-01} & \num{7.3e-02} & \num{3.4e-02} & \num{0.0e+00} & \num{0.0e+00} \\
SketchyCGAL & \num{3.0e+03} & \num{8.5e+00} & \num{1.1e+00} & \num{6.6e-01} & \num{1.0e+01} & \num{1.0e+01} \\
ProxSDP & \num{2.1e+02} & \num{1.1e+01} & \num{1.2e-19} & \num{3.2e-19} & \num{4.8e+01} & \num{1.9e+01} \\
SCS & \num{3.1e+03} & \num{2.5e+01} & \num{5.1e-05} & \num{9.5e-05} & \num{5.3e+02} & \num{4.3e+01} \\
BM & \num{4.2e+01} & \num{2.5e+00} & \num{1.1e-14} & \num{6.8e-15} & \num{2.3e+01} & \num{1.2e+01}\\
\bottomrule 
\end{tabular}
 \caption{Experimental results for $(n-k) = 10^3$ (10 instances) with time limit $3\times 10^3$ seconds.}
\label{table:1e3}
\end{table}

\begin{table}[htbp]
\centering
\begin{tabular}{lllllll}
\toprule 
Algorithm & time (s) & std. & $\norm{X - X^*}_F^2$ & std. & memory (MB) & std. \\
\midrule 
CertSDP & \num{4.5e+03} & \num{7.0e+02} & \num{1.9e-22} & \num{5.2e-23} & \num{8.5e+00} & \num{1.2e+01} \\
CSSDP & \num{1.0e+04} & \num{6.6e-01} & \num{2.7e+00} & \num{9.4e-01} & \num{6.2e+00} & \num{1.5e+01} \\
SketchyCGAL & \num{9.7e+03} & \num{1.8e+02} & \num{4.0e+00} & \num{1.4e+00} & \num{2.7e+01} & \num{2.2e+01} \\
ProxSDP & \num{1.2e+04} & \num{1.1e+02} & \num{2.9e+00} & \num{9.9e-01} & \num{1.9e+04} & \num{1.2e+02} \\
BM & \num{4.9e+02} & \num{2.8e+01} & \num{1.1e-14} & \num{1.0e-14} & \num{5.2e+01} & \num{6.9e+01}\\
\bottomrule 
\end{tabular}
 \caption{Experimental results for $(n-k) = 10^4$ (10 instances) with time limit $10^4$ seconds. SCS was unable to complete a single iteration within the time limit and utilized over 70GB of memory.}
\label{table:1e4}
\end{table}

\begin{table}[htbp]
\centering
\begin{tabular}{lllllll}
\toprule 
Algorithm & time (s) & std. & $\norm{X - X^*}_F^2$ & std. & memory (MB) & std. \\
\midrule 
CertSDP & \num{5.0e+04} & \num{6.2e+02} & \num{2.5e-02} & \num{6.5e-02} & \num{2.3e+02} & \num{2.0e+02} \\
CSSDP & \num{5.0e+04} & \num{4.7e+00} & \num{2.8e+00} & \num{5.1e-01} & \num{2.0e+02} & \num{2.5e+02} \\
SketchyCGAL & \num{4.7e+04} & \num{3.3e+03} & \num{4.0e+00} & \num{2.1e+00} & \num{3.7e+02} & \num{2.0e+02} \\
BM & \num{6.5e+03} & \num{3.0e+02} & \num{7.1e-15} & \num{3.5e-15} & \num{1.2e+03} & \num{3.4e+02} \\
\bottomrule 
\end{tabular}
 \caption{Experimental results for $(n-k) = 10^5$ (10 instances) with time limit $5\times 10^4$ seconds. SCS and ProxSDP are not tested as they both come to complete failure due to memory allocation. $^\dag$CSSDP failed due to numerical issues within the eigenvalue subroutine on three instances and SketchyCGAL failed due to numerical issues within the eigenvalue subroutine on one instance.}
\label{table:1e5}
\end{table}

\begin{figure}
	\centering
	\includegraphics[scale=\figurescale]{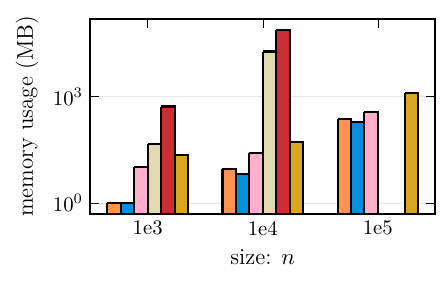}

	\includegraphics[scale=\figurescale]{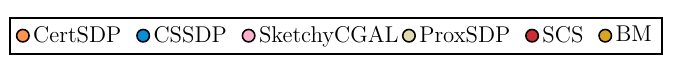}

	\caption{Memory usage of different algorithms as a function of the size $n-k$. In this chart, we plot $0.0$ MB at $1.0$ MB (see \cref{rem:virtual_size_memory} for a discussion on measuring memory usage).	}
	\label{fig:memory_usage}
\end{figure}

\begin{figure}
	\centering

	\includegraphics[scale=\figurescale]{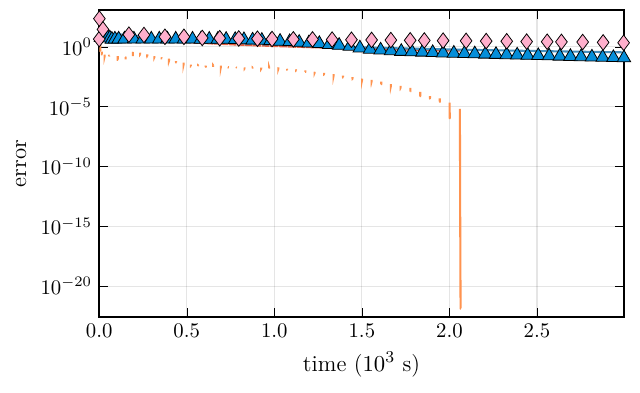}
	\includegraphics[scale=\figurescale]{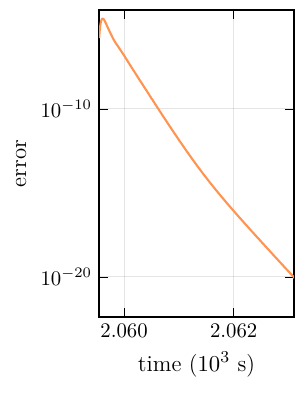}

	\vspace{-0.5em}

	\includegraphics[scale=\figurescale]{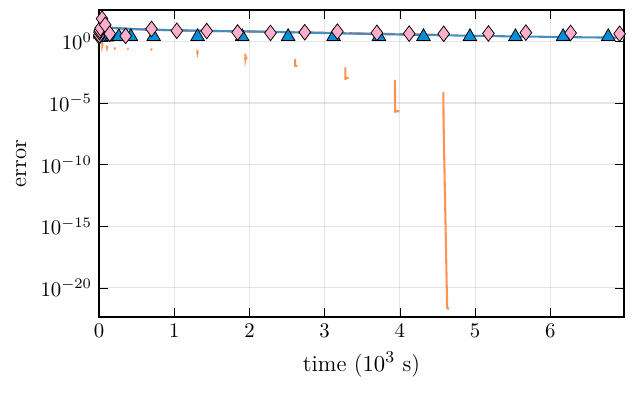}
	\includegraphics[scale=\figurescale]{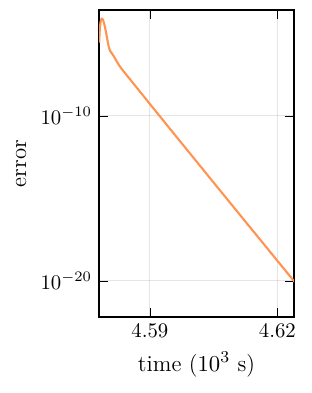}

	\vspace{-0.5em}

	\includegraphics[scale=\figurescale]{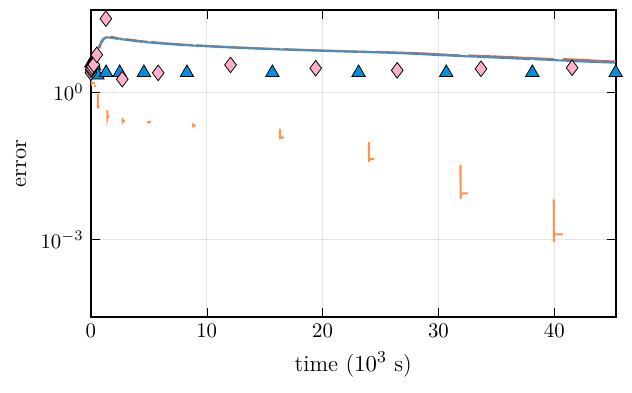}
	\includegraphics[scale=\figurescale]{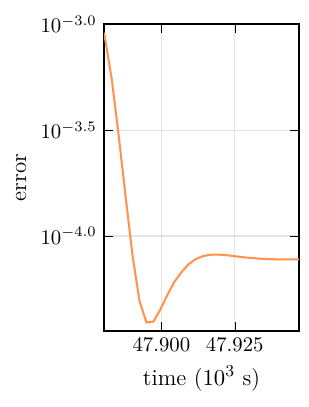}

	\vspace{-0.5em}

	\includegraphics[scale=\figurescale]{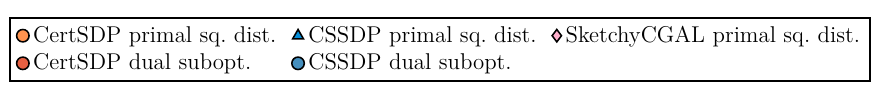}

	\caption{Comparison of convergence behavior between CertSDP (\cref{alg:exact_sdp_qmmp}), CSSDP, and SketchyCGAL.
	The first, second, and third rows show experiments with $n-k=10^3$, $10^4$, and $10^5$ respectively.
	The right subplots give zoomed-in views of the primal squared distance in CertSDP on the final call to \cref{alg:cautious-agd}.}
	\label{fig:convergence_behavior}
\end{figure}

We make a few observations:
\begin{itemize}
	\item For $n -k =10^3$ (see \cref{table:1e3}), both CertSDP and ProxSDP were able to achieve high accuracy within the time limit, while CSSDP, SketchyCGAL, and SCS could not.
	ProxSDP was faster than CertSDP while CertSDP used significantly less memory.
	\item For $n -k =10^4$ (see \cref{table:1e4}), CertSDP was the only convex-optimization-based algorithm that was able to achieve high accuracy within the time limit.
	The measured memory usage of CertSDP and CSSDP both had high variance, however it is clear that these algorithms use much less memory than SketchyCGAL, ProxSDP and SCS. As previously mentioned, SCS used over 70GB of memory at this size.
	\item For $n -k =10^5$ (see \cref{table:1e5}), CertSDP, CSSDP and SketchyCGAL were the only convex-optimization-based algorithms that could be run without memory allocation errors. While neither algorithm was able to achieve the desired accuracy within the time limit, CertSDP (average primal squared distance of $2.5\times 10^{-2}$) significantly outperformed CSSDP and SketchyCGAL (average primal squared distances of $2.8$ and $4.0$, respectively). 
	\item The dual suboptimality for CertSDP and CSSDP behave identically. This is expected as we employ Accelegrad to generate both sequences. 
	\item The primal squared distance and the dual suboptimality for CSSDP track quite closely. This is expected from \cite[Theorem 4.1, Table 3]{ding2021optimal}, as the primal squared distance is bounded by a constant factor of the dual suboptimality for CSSDP.
	\item The convergence behavior of CautiousAGD depends on whether $\cU^{(i)}$ in CertSDP is a certificate of strict complementarity.

	When $\cU^{(i)}$ is \emph{not} a certificate of strict complementarity, CautiousAGD behaves as in the bottom-right plot of \cref{fig:convergence_behavior}: It briefly converges linearly before plateauing. This makes sense as the iterates in CautiousAGD should converge linearly to $\argmin_X Q_{\cU^{(i)}}(X) \neq X^*$.
	
	When $\cU^{(i)}$ is a certificate of strict complementarity, the iterates of CautiousAGD converge linearly to $X^*$ (see the top-right and middle-right plots of \cref{fig:convergence_behavior}).

    \item The Burer-Monteiro method is an order of magnitude faster than the other approaches, and it consistently delivers high quality solutions comparable to CertSDP in small and medium scale instances, and much better quality in the large-scale instances.
\end{itemize}

\subsection{Additional experiments}
\label{subsec:additional}

\cref{sec:additional_exp} contains additional experiments on the PhaseLift SDP~\cite{candes2015phase} for the Gaussian model of phase retrieval. In the Gaussian model of phase retrieval, the goal is to recover an unknown $x^*\in\R^n$ from $m = O(n)$ observations $\beta_i = (g_i^\intercal x^*)^2$ where $g_i$ are Gaussian with an appropriate normalization. We generate instances of this problem with one highly correlated observation and compare the performance of the different algorithms.
We test 10 instances at each size $n = 30,\,100,\,300$ and $m = 5n$. The main bottleneck for scaling the size of these experiments is in storing the instance itself---each instance has size $\approx 5n^2$. Nonetheless, we expect that the behavior we see in these experiments is indicative of what to expect in the true model of phase retrieval where the observation vectors $g_1,\dots,g_m$ are only stored implicitly.
Detailed results for these experiments are shown in \cref{table:phret_3e1,table:phret_1e2,table:phret_3e2}. Comparisons of memory usage and convergence behavior are illustrated in \cref{fig:phret_memory_usage,fig:phret_convergence_behavior}.
Similar to the experiments in this section, CertSDP is able to achieve high accuracy within the given time limits and outperforms CSSDP and SketchyCGAL in terms of primal accuracy. On the other hand,
the ``crossover'' point, where CertSDP outperforms CSSDP and SketchyCGAL, seems to occur later after CSSDP and SketchyCGAL already produce iterates with moderate accuracy.
Additionally, CertSDP seems to outperform SketchyCGAL on our phase retrieval instances despite their behaviors being similar in the sparse QMP setting.
ProxSDP and SCS are able to achieve high accuracy on some instance sizes, but use significantly more memory than CertSDP.
Again,
the Burer-Monteiro method performs quite well and outperforms all convex-optimization-based algorithms. \section{Conclusions, limitations and future directions}
\label{sec:limitations}

In this paper, we presented a fast storage-optimal FOM to solve SDPs satisfying strict complementarity and exactness guarantees. While the numerical results are promising, there are limitations. We discuss these here and offer some thoughts on how to address them and future research directions.

\paragraph{Finding $\gamma^*$.}
In \cref{thm:overall_exact_sdp_qmmp}, it is assumed that CertSDP has access to iterates from a sequence $\gamma^{(1)},\gamma^{(2)},\dots$ that approach to a dual solution $\gamma^*$ satisfying $A(\gamma^*)\succ 0$.
In \cref{subsec:implementation_algs}, we follow \cite{ding2019optimal} and apply Accelegrad to a penalized dual problem to construct this sequence; this method works when the dual SDP has a unique solution $\gamma^*$.
Fortunately, it is possible to extend this scheme even to SDPs where the dual has multiple optimal solutions, some of which do not satisfy the assumption $A(\gamma)\succ 0$.
Again, let penalty denote a bound on $\tr(Y^*)$. Then, the problem
\begin{align*}
	\max_{\gamma\in\R^m,\, T\in\S^k}\tr\left(T\right) + \text{penalty}\cdot\min\left(0,\,  \lambda_{\min}\begin{pmatrix}
		A(\gamma)/2 & B(\gamma)/2\\
		B(\gamma)^\intercal/2 & \frac{c(\gamma)}{k}I_k - T
	\end{pmatrix}\right) + \min(0, \lambda_{\min}(A(\gamma)) - \hat\mu)
\end{align*}
has as its minimizers, the minimizers $(\gamma,T)$ of the dual SDP for which $A(\gamma^*)\succeq \hat\mu$.
This ``trick'' was used in our phase retrieval experiments (see \cref{sec:additional_exp}).

\paragraph{Estimating regularity parameters.} CautiousAGD (\cref{alg:cautious-agd}) requires a number of regularity parameters (see \cref{as:regularity,as:qmmp}) for its theoretical guarantees to hold. Fortunately, these parameters are supplied by CertSDP (\cref{alg:exact_sdp_qmmp}). On the other hand, CertSDP itself requires access to a number of regularity parameters  $\hat\mu,\,\hat L,\, \hat R_p,\,\hat R_d,\,\hat\rho$ (see \cref{as:primal_dual_regularity}).
It is possible to set $\hat L$ and $\hat\rho$ in terms of $\hat R_d$ and the efficiently computable quantities $\norm{A_0}_2,\norm{A_1}_2,\dots,\norm{A_m}_2$ and $\norm{B_1}_F,\dots, \norm{B_m}_F$:
\begin{gather*}
    \hat L \!=\!\norm{A_0}_2\! + \norm{\!\begin{pmatrix}
        \norm{A_1}_2\\
        \vdots\\
        \norm{A_m}_2
        \end{pmatrix}\!}_2\hat R_d,\quad
    \hat\rho \!=\! \max\!\left(\!\frac{\hat\mu}{\hat R_d},\,
    \norm{\begin{pmatrix}
        \norm{A_1}_2\\
        \vdots\\
        \norm{A_m}_2
    \end{pmatrix}}_2\!\!,\,
    \frac{1}{\hat R_p}    
    \norm{\begin{pmatrix}
        \norm{B_1}_F\\
        \vdots\\
        \norm{B_m}_F
        \end{pmatrix}}_2
    \right).
\end{gather*}
These bounds may be much larger than other available estimates for $\hat L$ and $\hat\rho$ depending on problem context.
We currently see no rigorous way of removing $\hat \mu,\,\hat R_p,\, \hat R_d$.
In \cref{sec:numerical}, we offer a heuristic for guessing $\hat\mu$ and $\hat L$ that seems to work well numerically.

Having access to regularity parameters such as $\hat\mu$, $\hat R_p$, and $\hat R_d$ is common in the literature on first-order methods. Nonetheless, important future work includes deriving variants of CertSDP that may attempt to learn $\hat\mu$ or other quantities without \emph{a priori} knowledge of such quantities.

\paragraph{Restrictiveness of the class of exact QMP-like SDPs.}
Beyond regularity parameters, the class of exact QMP-like SDPs includes major structural assumptions, namely explicit knowledge of $Y^*$ on a given subspace and strict complementarity.
The algorithm CertSDP and CautiousAGD are designed with these structural assumptions in mind and are brittle to these assumptions being violated. For example, suppose $Y^*$ has rank $> k$ and we are only given access to the restriction of $Y^*$ to a rank $k$ subspace. In this case, it is impossible to construct a certificate of strict complementarity. To see why, suppose in the idealized setting that we have access to $\gamma^*$. By the assumption that $\rank(Y^*)>k$, it holds that $\rank(A(\gamma^*))< n -k$. In this case, it is not possible to contain $\gamma^*$ in a ball contained in the set $\set{\gamma:\, A(\gamma)\succ 0}$.

While CertSDP and CautiousAGD depend heavily on the structural assumptions of the class of exact QMP-like SDPs, we believe that these assumptions may be relaxed \emph{significantly} and leave this as important future work.

\section*{Acknowledgments}
This research is supported in part by ONR grant N00014-19-1-2321 and AFOSR grant FA9550-22-1-0365.
The authors wish to thank the review team for their feedback and suggestions that led to an improved presentation of the material.

\newpage
{
	\bibliographystyle{plainnat}

}

\newpage
\begin{appendix}

\section{Deferred proofs}
\label{sec:deferred}

\begin{proof}[Proof of \cref{lem:quadratic_moving_center}]
    Let $\Delta \coloneqq \tilde X - X_L$. Then,
    \begin{align*}
    \frac{L}{2}\norm{X - X_L}_F^2 &= \frac{L}{2}\norm{X - \tilde X + \Delta}_F^2\\
&= \frac{\tilde L}{2}\norm{X - \tilde X}_F^2 + \frac{\tilde \mu}{2}\norm{X - \tilde X}_F^2 + L\ip{X - \tilde X, \Delta} + \frac{L}{2}\norm{\Delta}_F^2,
\end{align*}
    where the second equality follows from expanding the square and the fact that $L = \tilde L + \tilde \mu$.
    Moreover, 
    \begin{align*}
    0\leq \frac{L}{2}\norm{\sqrt{\frac{\tilde\mu}{L}} (X - \tilde X) + \sqrt{\frac{L}{\tilde\mu}}\Delta}_F^2 = \frac{\tilde\mu}{2}\norm{X - \tilde X}_F^2 + L\ip{X - \tilde X, \Delta} + L\kappa\norm{\Delta}_F^2.
    \end{align*}
    Combining these two inequalities gives
    \begin{align*}
    \frac{L}{2}\norm{X - X_L}_F^2 &\geq \frac{\tilde L}{2}\norm{X - \tilde X}_F^2 - \frac{L\delta^2}{2}\left(2\kappa - 1\right).\qedhere
    \end{align*}
    \end{proof}

The following proof is adapted from \cite[]{nesterov2018lectures}.
\begin{proof}
[Proof of \cref{lem:estimating_sequence_recurrence}]
It is evident that $\phi_t(X)$ are quadratic matrix functions of the form \eqref{eq:estimating_sequence_form} with $V_0=X_0$ and $\phi_0^*=Q(X_0)$. The remainder of the proof verifies the recurrences on $V_{t+1}$ and $\phi^*_{t+1}$. We suppose that the stated form holds for some $t$, and we will show that it will hold for $t+1$ as well. We compute
\begin{align*}
\frac{1}{\tilde \mu}\grad \phi_{t+1}(X) &= (1-\alpha)(X - V_t) + \alpha \left(X - \left(\Xi_t - \frac{1}{\tilde \mu}\tilde g_t\right)\right).
\end{align*}
We deduce that
$V_{t+1} = (1-\alpha)V_t + \alpha\left(\Xi_t - \frac{1}{\tilde \mu}\tilde g_t\right)$. Noting that $\phi_{t+1}^*=\phi_{t+1}(V_{t+1})$, and applying the recursive definition of $\phi_{t+1}(X)$ gives us
\begin{align*}
\phi_{t+1}^* &= (1-\alpha)\left(\phi_t^* + \frac{\tilde \mu}{2}\norm{V_{t+1} - V_t}_F^2\right)\\
&\qquad + \alpha \left( Q(X_{t+1}) + \frac{1}{2\tilde L}\norm{\tilde g_t}_F^2 + \ip{\tilde g_t, V_{t+1} - \Xi_t} + \frac{\tilde \mu}{2}\norm{V_{t+1} - \Xi_t}_F^2 \right)\\
&= (1-\alpha)\phi_t^* + \alpha\left(Q(X_{t+1}) + \frac{1}{2\tilde L}\norm{\tilde g_t}_F^2\right)\\
&\qquad + (1-\alpha)\frac{\tilde\mu}{2}\norm{V_{t+1} - V_t}_F^2 + \frac{\alpha\tilde \mu}{2}\norm{V_{t+1} - (\Xi_t - \tfrac{1}{\tilde\mu}\tilde g_t)}_F^2 - \frac{\alpha}{2\tilde \mu}\norm{\tilde g_t}_F^2\\
&= (1-\alpha)\phi_t^* + \alpha\left(Q(X_{t+1}) + \frac{1}{2\tilde L}\norm{\tilde g_t}_F^2\right)\\
&\qquad + \frac{\tilde\mu(1-\alpha)\alpha^2}{2}\norm{V_{t} - (\Xi_t - \tfrac{1}{\tilde\mu}\tilde g_t)}_F^2 + \frac{\tilde \mu\alpha (1-\alpha)^2}{2}\norm{V_{t} - (\Xi_t - \tfrac{1}{\tilde\mu}\tilde g_t)}_F^2 - \frac{\alpha}{2\tilde \mu}\norm{\tilde g_t}_F^2\\
&= (1-\alpha)\phi_t^* + \alpha\left(Q(X_{t+1}) + \frac{1}{2\tilde L}\norm{\tilde g_t}_F^2\right)\\
&\qquad + \alpha(1-\alpha)\left(\frac{\tilde \mu}{2}\norm{\Xi_t - V_t}_F^2 + \ip{\tilde g_t, V_t - \Xi_t} \right) -\frac{ \alpha^2}
{2\tilde \mu}\norm{\tilde g_t}_F^2, \end{align*}
where the third equation follows from substituting the expression for $V_{t+1}$, and the last one from regrouping the terms.
\end{proof}

The following proof is adapted from \cite[Page 92]{nesterov2018lectures}.
\begin{proof}
[Proof of \cref{lem:Xi_extragradient}]
Note that
\begin{align*}
\Xi_t &= \frac{X_t + \alpha V_t}{1+\alpha}\\
X_{t+1} &= \Xi_t - \frac{\tilde g_t}{\tilde L}\\
V_{t+1} &= (1-\alpha)V_t + \alpha\left(\Xi_t - \frac{1}{\tilde\mu}\tilde g_t\right).
\end{align*}
Therefore,
\begin{align*}
V_{t+1} &= (1-\alpha)\frac{(1+\alpha)\Xi_t - X_t}{\alpha} + \alpha\left(\Xi_t - \frac{1}{\tilde\mu}\tilde g_t\right)\\
&= X_t + \frac{1}{\alpha}\left(\Xi_t - X_t - \frac{1}{\tilde L}\tilde g_t\right)\\
&= X_t + \frac{1}{\alpha}\left(X_{t+1} - X_t\right).
\end{align*}
Then,
\begin{align*}
\Xi_{t+1} &= X_{t+1} + \frac{\alpha}{1+\alpha}\left(V_{t+1} - X_{t+1}\right)\\
&= X_{t+1} + \frac{1-\alpha}{1+\alpha}\left(X_{t+1} - X_t\right).\qedhere
\end{align*}
\end{proof}

\begin{proof}[Proof of \cref{lem:error_inexact_prox_1}]
    It is clear that $Q(X_0)\leq \phi_0^*$. Thus, consider $X_{t+1}$ with $t\geq 0$. By induction and \cref{lem:estimating_sequence_recurrence},
    \begin{align*}
    \phi_{t+1}^* &\geq (1-\alpha )Q(X_t) + \alpha Q(X_{t+1}) + \left(\frac{\alpha}{2\tilde L} - \frac{\alpha^2}{2\tilde \mu}\right)\norm{\tilde g_t}_F^2\\
    &\qquad + \alpha (1-\alpha)\ip{\tilde g_t, V_t - \Xi_t} - (1-\alpha)\left(2\kappa E^{(1)}_t\right).
    \end{align*}
    As $X_{t+1}$ satisfies $Q_L(\Xi_t; X_{t+1}) \leq Q^*(\Xi_t) + \epsilon_t$, we deduce (see \cref{thm:combined_inequality_eps}) that
    \begin{align*}
    Q(X_t) &\geq Q(X_{t+1}) + \frac{1}{2\tilde L}\norm{\tilde g_t}_F^2 + \ip{\tilde g_t, X_t - \Xi_t}   + \frac{\tilde \mu}{2}\norm{X_t - \Xi_t}_F^2 - 2\kappa\epsilon_t.
    \end{align*}
    These two inequalities together lead to
    \begin{align*}
    \phi^*_{t+1}
    &\geq Q(X_{t+1}) - 2\kappa(1-\alpha)(E^{(1)}_t + \epsilon_t)\\
    &\qquad  + \left(\frac{\alpha}{2\tilde L} - \frac{\alpha^2}{2\tilde \mu} + \frac{1-\alpha}{2\tilde L}\right)\norm{\tilde g_t}_F^2 + (1-\alpha)\ip{\tilde g_t, \alpha(V_t-\Xi_t) + (X_t - \Xi_t)}.
    \end{align*}
    It is straightforward to show that the two quantities on the final line are identically zero using the relations $\alpha^2 = \tilde \mu / \tilde L$ and $\Xi_t = \tfrac{X_t + \alpha V_t}{1+\alpha}$ (see \cref{lem:Xi_extragradient}).
    \end{proof}
    
    \begin{proof}[Proof of \cref{lem:error_inexact_prox_2}]
        The statement holds holds for $t = 0$. Thus, consider $\phi_{t+1}$ for $t\geq 0$. 
        By definition
        \begin{align*}
        \phi_{t+1}(X) &= (1-\alpha)\phi_t(X) + \alpha \left( Q(X_{t+1}) + \frac{1}{2\tilde L}\norm{\tilde g_t}_F^2 + \ip{\tilde g_t, X - \Xi_t} + \frac{\tilde \mu}{2}\norm{X - \Xi_t}_F^2 \right).
        \end{align*}
        As $X_{t+1}$ satisfies $Q_L(\Xi_t; X_{t+1}) \leq Q^*(\Xi_t) + \epsilon_t$, we deduce (see \cref{thm:combined_inequality_eps}) that
        \begin{align*}
        Q(X) \geq Q(X_{t+1}) + \frac{1}{2\tilde L} \norm{\tilde g_t}_F^2 + \ip{\tilde g_t, X - \Xi_t} + \frac{\tilde \mu}{2}\norm{X - \Xi_t}_F^2 - 2\kappa\epsilon.
        \end{align*}
        Then, these inequalities combined with the inductive hypothesis give
        \begin{align*}
        \phi_{t+1}(X) &\leq (1-\alpha) \phi_t(X) + \alpha Q(X) + 2\kappa\alpha\epsilon_t\\
        &= (1 - (1-\alpha)^{t+1}) Q(X) + (1-\alpha)(\phi_t(X) -(1- (1-\alpha)^{t})Q(X)) + 2\kappa\alpha\epsilon_t\\
        &\leq (1 - (1-\alpha)^{t+1}) Q(X) + (1-\alpha)^{t+1}\phi_0(X) + 2\kappa\left((1-\alpha)E^{(2)}_t+\alpha\epsilon_t\right). \qedhere
        \end{align*}
\end{proof}

\begin{proof}
[Proof of \cref{cor:qmmp_subopt_E}]
    Let $X^*_\cU$ denote the optimizer of \eqref{eq:qmmp} so that $Q(X^*_\cU) = \Opt_{\eqref{eq:qmmp}}$.
    Then, \cref{lem:error_inexact_prox_1,lem:error_inexact_prox_2} give
    \begin{align*}
    Q(X_t)- \Opt_{\eqref{eq:qmmp}} &\leq \phi_t^* + 2\kappa E_t^{(1)} - Q(X^*_\cU)\\
    &\leq\phi_t(X^*_\cU) +2\kappa E_t^{(1)} - Q(X^*_\cU)\\
    &\leq (1 - (1-\alpha)^t)Q(X^*_\cU) + (1-\alpha)^t \phi_0(X^*_\cU) + 2\kappa E_t - Q(X^*_\cU)\\
    &= (1-\alpha)^t \left(\phi_0(X^*_\cU)- \Opt_{\eqref{eq:qmmp}}\right) + 2\kappa E_t.
    \end{align*}
Note also that by the definition of $\phi_0(\cdot)$ and the $\mu$-strong convexity of $Q$, we have
    \begin{align*}
    \phi_0(X^*_\cU) - \Opt_{\eqref{eq:qmmp}} &= Q(X_0) - \Opt_{\eqref{eq:qmmp}} + \frac{\tilde\mu}{2}\norm{X^*_\cU - X_0}_F^2\\
    &\leq 2\left(Q(X_0) - \Opt_{\eqref{eq:qmmp}}\right).
    \end{align*}
    Combining the two inequalities completes the proof.
\end{proof}

\begin{proof}[Proof of \cref{lem:xi_t_norm_bound}]
    Let $\tilde \gamma\in\argmax_{\gamma\in\cU}q(\gamma,X_0)$.
    By $\mu$-strong convexity of $Q(X)$, we have that
    \begin{align*}
    Q(X) &\geq q(\tilde\gamma, X)\\
    &\geq q(\tilde\gamma, X_0) + \ip{\grad_2\, q(\tilde\gamma, X_0), X - X_0} + \frac{\mu}{2}\norm{X - X_0}_F^2\\
    &= Q(X_0) - \frac{1}{2\mu} \norm{\grad_2\, q(\tilde\gamma, X_0)}_F^2 + \frac{\mu}{2}\norm{X - X_0 + \frac{\grad_2\,q(\tilde\gamma, X_0)}{\mu}}_F^2.
    \end{align*}
    In particular,
    taking $X = \argmin_{X\in\R^{(n-k)\times k}} Q(X)$ gives
    \begin{align*}
    Q(X_0) - \Opt_{\eqref{eq:qmmp}}\leq \frac{\norm{\grad_2\, q(\tilde \gamma, X_0)}_F^2}{2\mu}\leq \frac{\mu\kappa^2 R^2}{2}, 
    \end{align*}
    where the last inequality follows from \cref{as:regularity}. 
    This proves the first claim.
    Next, by \cref{thm:cautious_agd_oracle}, we have that for all $t\geq 0$, that $Q(X_t)-Q(X_0)\leq Q(X_t)-\Opt_{\eqref{eq:qmmp}} \leq 2\mu\kappa^2 R^2$ and hence
    \begin{align*}
    \frac{\mu}{2}\norm{X_t - X_0 + \frac{\grad_2\, q(\tilde\gamma, X_0)}{\mu}}_F^2 \leq Q(X_t) - Q(X_0) + \frac{\norm{\grad_2\, q(\tilde\gamma, X_0)}_F^2}{2\mu}\leq \frac{5\mu\kappa^2 R^2}{2}.
    \end{align*}
    Using the assumption $X_0 = 0_{(n-k)\times k}$ in~\cref{as:regularity} and applying triangle inequality together with the bound $\norm{\grad_2\,q(\tilde\gamma, X_0)}_F^2\leq \mu^2\kappa^2 R^2$ derived from~\cref{as:regularity}, we deduce that for all $t\geq 0$,
    \begin{align*}
    \norm{X_t}_F &\leq \left(1+\sqrt{5}\right)\kappa R.
    \end{align*}
    Then, as $\Xi_{t+1} = X_{t+1} + \frac{1-\alpha}{1+\alpha}\left(X_{t+1} - X_t\right)$, we have
    \begin{align*}
    \norm{\Xi_{t+1}}_F &\leq 3\left(1+\sqrt{5}\right)\kappa R\leq 10\kappa R.\qedhere
    \end{align*}
    \end{proof}

    \begin{proof}[Proof of \cref{lem:grad_hessian_psi}]
        Recall that by definition, the linear operator $\cG$ maps $\gamma$ to $\sum_{i=1}^m \gamma_i \left(A_i \Xi_t + B_i\right)$.
        Thus, for any $\gamma\in\bS^{m-1}$,
        \begin{align*}
        \norm{\cG\gamma}_F &= 
        \norm{\sum_{i=1}^m \gamma_i \left(A_i \Xi_t + B_i\right)}_F\\
        &\leq \norm{\sum_{i=1}^m \gamma_i A_i}_2\norm{\Xi_t}_F+ \norm{\sum_{i=1}^m \gamma_i B_i}_F\\
        &\leq 11\frac{\mu \kappa H R}{D}.\qedhere
        \end{align*}

\end{proof}

\begin{proof}[Proof of \cref{lem:gamma_necessary_accuracy}]
    Let $r \coloneqq \norm{\gamma^{(i)} - \gamma^*}_2$.
    Using~\cref{as:primal_dual_regularity}, we may bound the individual terms within the definition of $r^{(i)}$ as
    \begin{gather*}
    2\hat R_d - \norm{\gamma^{(i)}}_2 \geq \hat R_d - r \geq \frac{\hat\mu}{\hat \rho} - r,\\
    \frac{\lambda_{\min}\left(A\left(\gamma^{(i)}\right)\right) - \hat\mu/2}{\hat\rho} \geq \frac{\hat\mu/2 - \hat\rho r}{\hat\rho} = \frac{\hat\mu}{2\hat\rho} - r,\\
    \frac{2\hat L - \lambda_{\max}\left(A\left(\gamma^{(i)}\right)\right)}{\hat\rho} \geq \frac{\hat L - \hat\rho r}{\hat\rho} = \frac{\hat L}{\hat\rho} - r,\text{ and}\\
    \frac{2\hat L\hat R_p - \norm{B\left(\gamma^{(i)}\right)}_F}{\hat\rho\hat R_p} \geq \frac{\hat L - \hat\rho r}{\hat\rho} = \frac{\hat L}{\hat\rho} - r.
    \end{gather*}
    Thus, $r^{(i)} \geq \min\left(\frac{\hat\mu}{2\hat\rho},\,\frac{\hat\mu}{2\hat\rho} - r\right)= \frac{\hat\mu}{2\hat\rho} - r$. Then,  when $r\leq \frac{\hat\mu}{4\hat\rho}$, we have $r^{(i)}>0$ and furthermore, $r^{(i)}\geq r = \norm{\gamma^{(i)} - \gamma^*}_2$.
    \end{proof}

    \begin{proof}[Proof of \cref{lem:parameters_for_qmmp}]
        Begin by noting that for all $\gamma\in\cU^{(i)}$, 
        \begin{align*}
        \frac{\hat\mu}{2}I\preceq
        A\left(\gamma^{(i)}\right) - r^{(i)}\hat\rho I \preceq
        A(\gamma) \preceq
        A\left(\gamma^{(i)}\right) + r^{(i)}\hat \rho I \preceq 2\hat L I.
        \end{align*}
        Let $\tilde \gamma\in\argmax_{\gamma\in\cU^{(i)}}q(\gamma,0_{(n-k)\times k})$.
        Then,
        \begin{align*}
        \norm{B(\tilde\gamma)}_F \leq \norm{B\left(\gamma^{(i)}\right)}_F + \hat\rho r^{(i)}\hat R_p \leq 2\hat L\hat R_p = LR.
        \end{align*}
        Next, for $\gamma\in\bS^{m-1}$
        \begin{gather*}
        \frac{D\norm{\sum_{i=1}^m \gamma_i A_i}_2}{\mu} \leq \frac{4r^{(i)}\hat\rho}{\hat\mu} \leq 2\\
        \frac{D\norm{\sum_{i=1}^m \gamma_i B_i}_F}{L R} \leq \frac{r^{(i)}\hat\rho}{\hat L} \leq 1/2.\qedhere
        \end{gather*}
        \end{proof}
        
        \begin{lemma}
            \label{lem:correctness_of_random_generation}
            Consider an instance of \eqref{eq:dist_min_qmp} generated by the random procedure in \cref{subsec:random_instances}. Then equality holds throughout \eqref{eq:dist_min_qmp}.
            \end{lemma}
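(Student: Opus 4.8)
The plan is to exhibit explicit feasible points for the primal and dual SDPs in \eqref{eq:dist_min_qmp}, both attaining objective value $\norm{X^*}_F^2/2$, and then collapse the chain of inequalities via weak duality. First I would record the data of the distance-minimization QMP: since its objective is $\frac{\norm{X}_F^2}{2}$ we have $A_\obj = I_{n-k}$, $B_\obj = 0$, $c_\obj = 0$, so $A(\gamma^*) = I_{n-k} + \sum_{i=1}^m\gamma^*_iA_i$, $B(\gamma^*) = \sum_{i=1}^m\gamma^*_iB_i$, and $c(\gamma^*) = \sum_{i=1}^m\gamma^*_ic_i$. The generation procedure in \cref{subsec:random_instances} picks $r$ so that $\lambda_{\min}(A(\gamma^*)) = \mu^* > 0$, whence $A(\gamma^*)\succ 0$ and $X^* = -A(\gamma^*)^{-1}B(\gamma^*)$ is well defined. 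Because each $c_i$ is set to $-\tfrac12\tr(X^{*\intercal}A_iX^*) - \ip{B_i,X^*}$, we have $q_i(X^*) = \tfrac12\tr(X^{*\intercal}A_iX^*) + \ip{B_i,X^*} + c_i = 0$ for all $i\in[m]$, so $X^*$ is QMP-feasible and the QMP optimal value is at most $\norm{X^*}_F^2/2$.

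Next I would check the two SDP endpoints. Here $Z^* = I_k$, so set $Y^* \coloneqq Y(X^*) = \begin{smallpmatrix} X^*X^{*\intercal} & X^*\\ X^{*\intercal} & I_k\end{smallpmatrix}$; then $Y^*\succeq 0$, its bottom-right block is $I_k$, and $\ip{\begin{smallpmatrix} A_i/2 & B_i/2\\ B_i^\intercal/2 & (c_i/k)I_k\end{smallpmatrix},Y^*} = q_i(X^*) = 0$, so $Y^*$ is primal-feasible with objective $\ip{\begin{smallpmatrix} I_{n-k}/2 &\\ & 0_k\end{smallpmatrix},Y^*} = \tfrac12\tr(X^*X^{*\intercal}) = \tfrac12\norm{X^*}_F^2$; hence the primal SDP optimal value is at most $\norm{X^*}_F^2/2$. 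For the dual, take $T^*$ as in \cref{subsec:random_instances}; the dual slack matrix then equals $\begin{smallpmatrix} A(\gamma^*)/2 & B(\gamma^*)/2\\ B(\gamma^*)^\intercal/2 & \tfrac12 B(\gamma^*)^\intercal A(\gamma^*)^{-1}B(\gamma^*)\end{smallpmatrix}$, which is PSD since $A(\gamma^*)\succ 0$ and its Schur complement vanishes identically; so $(\gamma^*,T^*)$ is dual-feasible and the dual SDP optimal value is at least $\tr(T^*)$.

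The one computation that needs care — and the only real work in the proof — is showing $\tr(T^*) = \norm{X^*}_F^2/2$. Summing the identity $c_i = -\tfrac12\tr(X^{*\intercal}A_iX^*) - \ip{B_i,X^*}$ against $\gamma^*$ and using $\sum_i\gamma^*_iA_i = A(\gamma^*) - I_{n-k}$ and $\sum_i\gamma^*_iB_i = B(\gamma^*)$ gives $c(\gamma^*) = -\tfrac12\tr(X^{*\intercal}A(\gamma^*)X^*) + \tfrac12\norm{X^*}_F^2 - \ip{B(\gamma^*),X^*}$; substituting $X^* = -A(\gamma^*)^{-1}B(\gamma^*)$ (and using $A(\gamma^*) = A(\gamma^*)^\intercal$) collapses this to $c(\gamma^*) = \tfrac12\tr(B(\gamma^*)^\intercal A(\gamma^*)^{-1}B(\gamma^*)) + \tfrac12\norm{X^*}_F^2$. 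Since $\tr((c(\gamma^*)/k)I_k) = c(\gamma^*)$, we get $\tr(T^*) = c(\gamma^*) - \tfrac12\tr(B(\gamma^*)^\intercal A(\gamma^*)^{-1}B(\gamma^*)) = \tfrac12\norm{X^*}_F^2$. Finally, weak duality gives that the QMP value is at least the primal SDP value, which in turn is at least the dual SDP value; combining this with the three bounds above forces all three to equal $\norm{X^*}_F^2/2$, i.e., equality holds throughout \eqref{eq:dist_min_qmp}. I do not expect any genuine obstacle: besides the trace identity, everything is feasibility checking, though one must stay attentive to the $\tfrac1kI_k$ blocks and the $\tr(I_k) = k$ normalization in the constraint and objective data.
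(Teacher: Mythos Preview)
Your proof is correct and follows essentially the same approach as the paper: exhibit the dual-feasible pair $(\gamma^*,T^*)$ with $\tr(T^*)=\tfrac12\norm{X^*}_F^2$ (via the Schur complement for feasibility and the same algebraic identity for the trace), then collapse the chain using the feasibility of $X^*$ in the QMP. You are simply more explicit than the paper in also verifying primal SDP feasibility of $Y(X^*)$ and invoking weak duality, which the paper leaves tacit.
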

            \begin{proof}
            It suffices to show that $\gamma^*$ and $T^*$ are feasible and achieve value $\norm{X^*}_F^2$ in the dual SDP (i.e., the third line of \eqref{eq:dist_min_qmp}).
            
            Note that by Schur Complement Theorem,
            \begin{align*}
            \begin{pmatrix}
                A(\gamma^*)/2 & B(\gamma^*)/2\\
                B(\gamma^*)^\intercal /2  & \frac{c(\gamma^*)}{k}I_k  - T^*
            \end{pmatrix}
            \sim
            \begin{pmatrix}
                I_{n-k} & \\
                  & \frac{c(\gamma^*)}{k}I_k  - T^* - \frac{B(\gamma^*)^\intercal A(\gamma^*)^{-1}B(\gamma^*)}{2}
            \end{pmatrix} = \begin{pmatrix}
                I_{n-k}&\\&0_k
            \end{pmatrix}.
            \end{align*}
            Here, $\sim$ indicates matrix similarity. Thus, $\gamma^*$ and $T^*$ are feasible in the dual SDP.
            
            Next,
            \begin{align*}
            \tr(T^*) &= \tr\left(\frac{c(\gamma^*)}{k}I_k  - \frac{B(\gamma^*)^\intercal A(\gamma^*)^{-1}B(\gamma^*)}{2}\right)\\
            &= \frac{\tr\left((X^*)^\intercal A(\gamma^*) X^*\right)}{2}+ \ip{B(\gamma^*), X^*} + c(\gamma^*)\\
            &= \frac{\norm{X^*}_F^2}{2} + \sum_{i=1}^m \gamma^*_i\left(\tr\left(\frac{(X^*)^\intercal A_i X^*}{2}\right) + \ip{B_i, X^*} + c_i\right) = \frac{\norm{X^*}_F^2}{2}.\qedhere
            \end{align*}
            \end{proof}

\section{Strict complementarity in quadratic matrix programs}
\label{sec:strict_comp_qmp}

In this section, we show that a generic quadratic matrix program (QMP) in an $n\by k$ dimensional matrix variable with at most $k$ constraints satisfies strict complementarity (assuming only existence of primal and dual solutions).

We will need the following lemma stating that a generic bilinear system has only the trivial solutions. This lemma follows from basic dimension-counting arguments in algebraic geometry. However, we will instead prove the lemma directly using only elementary tools.
\begin{lemma}
    \label{lem:generic_bilinear}
    Let $n,p\in\N$ and consider the space $(\R^{n\times p})^{n+p-1}$. Let the collection $(A_i) = (A_1,\dots, A_{n+p-1})$ denote an element of this space. Here, each $A_i\in\R^{n\by p}$. Then, the collections $(A_i)$ for which the bilinear system
    \begin{align*}
        \begin{cases}
        x^\intercal A_i y = 0 \qquad\forall i\in[n+p-1]
        \end{cases}
    \end{align*}
    has a nontrivial solution (i.e., where $x\in\R^n$ and $y\in\R^p$ are both nonzero) forms a set of measure zero in $(\R^{n\by p})^{n+p-1}$.
    \end{lemma}
    \begin{proof}
    Let $\cS$ be the exceptional set, i.e.,
    \begin{align*}
        \cS \coloneqq \set{(A_i)\in (\R^{n\by p})^{n+p-1}:\, \begin{array}{l}
        \exists x\in\R^n\setminus\set{0},\, y\in\R^p\setminus\set{0}\\
        x^\intercal A_i y = 0 ,\,\forall i\in[n+p-1]
        \end{array}}.
    \end{align*}
    By homogeneity, we may require that $x\in\R^n$ has some coordinate equal to one. Similarly, we will require that $y\in\R^p$ has some coordinate equal to one. Thus, we may decompose $\cS = \bigcup_{\ell=1}^n\bigcup_{r=1}^p \cS_{\ell,r}$, where
    \begin{align*}
        \cS_{\ell,r} = \set{(A_i)\in (\R^{n\by p})^{n+p-1}:\, \begin{array}{l}
        \exists x\in\R^n,\, y\in\R^p\\
        x_\ell = 1\\
        y_r = 1\\
        x^\intercal A_i y = 0 ,\,\forall i\in[n+p-1]
        \end{array}}.
    \end{align*}
    We will show that for each $\ell\in[n]$ and $r\in[p]$ that $\cS_{\ell,r}$ has measure zero. Without loss of generality, let $\ell = r= 1$.
    
    Consider the affine space
    \begin{align*}
    \cM \coloneqq \set{(x,y,B_1,\dots,B_{n+p-1})\in \R^n\times\R^p\times(\R^{n\times p})^{n+p-1}:\, \begin{array}{l}
    x_1 = 1\\
    y_1 = 1\\
    (B_i)_{1,1} = 0 ,\,\forall i\in[n+m-1]
    \end{array}}.
    \end{align*}
    Let $\cF_{1,1}: \cM\to(\R^{n\times p})^{n+p-1}$ send the element $(x,y,B_1,\dots,B_{n+p-1})$ to $(A_1,\dots,A_{n+p-1})$ where
    \begin{align*}
        A_i = \begin{pmatrix}
            1 & -x_2 & \dots & -x_n\\
            x_2 & 1 & \\
            \vdots & & \ddots\\
            x_n & & & 1
            \end{pmatrix}B_i \begin{pmatrix}
        1 & y_2 & \dots & y_p\\
        -y_2 & 1 & \\
        \vdots & & \ddots\\
        -y_p & & & 1
        \end{pmatrix}.
    \end{align*}

    One may verify that $\cF_{1,1}$ is a smooth map. Furthermore, its domain has dimension $(n-1) + (p-1) + (np - 1)(n+p-1) = np(n+p-1) - 1$. This is one less than the dimension of the space $\left(\R^{n\times p}\right)^{n+p-1}$.
    It is known that the image of a Euclidean space under a smooth map into a Euclidean space of higher dimension must have Lebesgue measure zero (see Sard's lemma~\cite{sard1942measure}). Thus, $\cF_{1,1}(\cM)$ has Lebesgue measure zero.
    
    It remains to verify\footnote{It is in fact true that the two sets are equal but only one direction is necessary in this proof.} that $\cS_{1,1}\subseteq \cF_{1,1}(\cM)$. Suppose $(A_i)\in\cS_{1,1}$ and let $x,y$ with $x_1 = y_1 = 1$ satisfy $x^\intercal A_i y = 0$ for all $i\in[n+p-1]$. Let
    \begin{align*}
        B_i = \begin{pmatrix}
            1 & -x_2 & \dots & -x_n\\
            x_2 & 1 & \\
            \vdots & & \ddots\\
            x_n & & & 1
            \end{pmatrix}^{-1} A_i \begin{pmatrix}
        1 & y_2 & \dots & y_p\\
        -y_2 & 1 & \\
        \vdots & & \ddots\\
        -y_p & & & 1
        \end{pmatrix}^{-1}.
    \end{align*}
    Note that $(B_i)_{1,1} = \frac{1}{\norm{x}^2\norm{y}^2}x^\intercal A_i y = 0$ for all $i\in[n+p-1]$. The remaining sets $\cS_{\ell,r}$ can be shown to have measure zero using analogous maps $\cF_{\ell,r}$. This concludes the proof.
    \end{proof}
    
    \begin{lemma}
    \label{lem:QMP_n_k}
    Let $n,k\in\N$ and consider the SDP relaxation of a QMP with $k$ constraints in a variable of size $n\by k$ and its dual:
    \begin{align*}
        &\inf_{Y\in\S^{n+k}}\set{\ip{\begin{pmatrix}
        A_\obj/2 & B_\obj/2\\
        B_\obj^\intercal/2 & \tfrac{c_\obj}{k}I_k
        \end{pmatrix}, Y}:\, \begin{array}{l}
            \ip{\begin{pmatrix}
                A_i/2 & B_i/2\\
                B_i^\intercal/2 & \tfrac{c_i}{k}I_k
                \end{pmatrix}, Y} = 0,\,\forall i\in[k]\\
                Y = \begin{pmatrix}
                * & *\\
                * & I_k
                \end{pmatrix}\succeq 0
        \end{array}}\\
        &\qquad\geq\sup_{\gamma\in\R^k,\, T\in\R^{k\by k}}\set{\tr(T):\, \begin{pmatrix}
        A(\gamma)/2 & B(\gamma)/2\\
        B(\gamma)^\intercal/2 & \frac{c(\gamma)}{k}I_k - T
        \end{pmatrix}\succeq 0}.
    \end{align*}
    There exists a subset $\cE\subseteq(\S^n)^{1+k} \times (\R^{n\by k})^{1+k}$ of measure zero such that if
    \begin{align*}
        (A_\obj, A_1,\dots,A_k,B_\obj, B_1,\dots,B_k)\notin \cE
    \end{align*}
    and the primal and dual SDPs are both solvable, then strict complementarity holds and the primal and dual SDPs both have unique optimizers.
    \end{lemma}
    \begin{proof}
    We will condition on the following bilinear system in the variables $(\gamma_\obj,\dots,\gamma_k)\in\R^{1+k}$ and $x\in\R^n$ having no nontrivial solutions:
    \begin{align*}
        \begin{cases}
        \left(\gamma_\obj A_\obj + \sum_{i=1}^k \gamma_i A_i \right)x = 0\\
        \left(\gamma_\obj B_\obj + \sum_{i=1}^k \gamma_i B_i\right)^\intercal x = 0
        \end{cases}.
    \end{align*}
    This is a homogeneous bilinear system in $n + 1 + k$ variables with $n+k$ constraints. Thus, by \cref{lem:generic_bilinear}, this system has no nontrivial solutions outside an exceptional set $\cE$ of measure zero.
    
    Let $(\gamma^*,T^*)$ denote a dual optimum solution. We claim that $A(\gamma^*)\succ 0$. For the sake of contradiction, assume that $x\in\ker(A(\gamma^*))$ is nonzero. Then, by assumption, $x^*$ and $(1,\gamma^*)$ are not a solution to the bilinear system above, i.e., $B(\gamma^*)^\intercal x \neq 0$ and there exists a column of $B(\gamma^*)$, say the first column, that has nonzero inner product with $x$.
    This contradicts the feasibility of $(\gamma^*, T^*)$. Specifically for $\alpha \in\R$,
    \begin{align*}
        \begin{pmatrix}
        \alpha x\\
        e_1
        \end{pmatrix}^\intercal
        \begin{pmatrix}
        A(\gamma^*)/2 & B(\gamma^*)/2\\
        B(\gamma^*)^\intercal/2 & \frac{c(\gamma^*)}{k}I_k - T^*
        \end{pmatrix}
        \begin{pmatrix}
        \alpha x\\
        e_1
        \end{pmatrix} = \alpha \ip{x, B(\gamma^*)e_1} + \left(c(\gamma^*)/k + T^*_{1,1}\right).
    \end{align*}
    Picking $\alpha$ large or small enough makes this quantity negative, contradicting that the matrix on the left is positive semidefinite.

    We have shown that for every dual optimum solution $(\gamma^*, T^*)$, the above slack matrix has rank at least $n$. Similarly, any primal optimum solution $Y^*$ must have rank at least $k$.
    We deduce that every primal optimum solution $Y^*$ has rank exactly $k$ and that for every dual optimum solution $(\gamma^*, T^*)$, the slack matrix has rank exactly $n$.
    Now, these optimizers must correspond to faces of slices of $\S^{n+k}_+$. As the only faces of slices of $\S^{n+k}_+$ with constant rank are singleton sets, we deduce that there is a unique primal optimizer and a unique dual optimizer.
\end{proof}

\section{Additional experiments on phase-retrieval inspired SDP instances}
\label{sec:additional_exp}

We perform additional experiments on SDP instances inspired by the phase retrieval problem.

The phase retrieval problem seeks to learn a vector $x^*$ given only the magnitudes of linear measurements of $x^*$, and finds applications in imaging.
In the Gaussian model of phase retrieval~\cite{candes2015phase}, we assume $x^*\in\R^n$ is arbitrary and $G\in\R^{m \by n}$ is entrywise Gaussian with an appropriate normalization. We are given
\begin{align*}
    \abs{Gx^*}.
\end{align*}
Here, the absolute value is taken entrywise. Equivalently, we are given the entrywise square of $Gx^*$, or $b = \diag(Gx^*(x^*)^\intercal G^\intercal)$. In this setting, it is known that the PhaseLift SDP,
\begin{align*}
    \min_{Y\succeq 0}\set{\tr(Y):\, \begin{array}{l}
        \diag(GYG^\intercal) = \beta
    \end{array}}
\end{align*}
has $(x^*)(x^*)^\intercal$ as its unique solution with high probability once the number of observations $m$ is roughly $O(n)$. Recent work~\cite{ding2023sharp} shows that strict complementarity holds between this SDP and its dual with high probability in the same regime.

We note that the Gaussian model of phase retrieval requires storing the matrix $G$ as part of the instance. This is a matrix of size $O(n^2)$ and thus limits the size of our current experiments. Nonetheless, we expect the behavior we observe with these experiments to hold in the real setting of phase retrieval where the matrix $G$ is highly structured and can be stored implicitly. We leave this as important future work.

We compare CertSDP (\cref{alg:exact_sdp_qmmp}), CSSDP~\cite{ding2019optimal}, SketchyCGAL~\cite{yurtsever2021scalable}, ProxSDP~\cite{souto2020exploiting}, SCS~\cite{odonoghue2016conic}, and Burer-Monteiro~\cite{burer2003nonlinear}.

\paragraph{Random instance generation.} 
We generate instances as follows. Suppose $n$ is given. We set $m = 5n$.
We generate $G\in\R^{m\by n}$ where each entry is independent $N(0,1/m)$. We then preprocess $G$ so that its $m$th observation vector, i.e., the $m$th row of $G$, is parallel to $e_n$.
Next, we sample $x^*$ uniformly form
\begin{align*}
    \bS^{n-1}\cap\set{x\in\R^n:\, x_n = 0.1}.
\end{align*}
Thus, this is a random instance of phase retrieval where we are given one highly-correlated observation.

\paragraph{Implementation details.} The algorithms we test are mostly as described in \cref{subsec:implementation_algs}. The major differences in implementation are described below:
\begin{itemize}
    \item In the instances tested in \cref{sec:numerical}, the $A_i$ matrices encountered were sparse. In the phase retrieval problems we test in this appendix, the $A_i$ matrices are dense but rank-one. The implementations of CertSDP, CSSDP, and SketchyCGAL are modified to handle such instances.
    \item Phase retrieval instances are likely to contain many dual optimal solutions that may not satisfy strict complementarity. Within CertSDP and CSSDP, we employ the Accelegrad algorithm to approximately solve
    \begin{align*}
        \max_{\gamma\in\R^m} \beta^\intercal \gamma + \textup{penalty} \min\left(0, \lambda_{\min}\left(I - G^\intercal \Diag(\gamma)G\right)\right) + \min(0, \lambda_{1+2}(I - G^\intercal\Diag(\gamma)G) - 0.1).
    \end{align*}
    Here, $\lambda_{1+2}(\cdot)$ denotes the sum of the two smallest eigenvalues of a given matrix and is a concave expression in its input. This penalization/regularization encourages solutions $\gamma$ for which the second eigenvalue of $I - G^\intercal\Diag(\gamma)G$ is positive, so that $A(\gamma)\succ 0$. We set $\textup{penalty} = 10$.
    \item Recall that in \cref{sec:numerical}, we replaced the random sketch in SketchyCGAL with a projection onto a submatrix to reflect the fact that for QMP instances, the goal is to recover the $(n-k)\by k$ top-right submatrix of the SDP optimizer. For the phase retrieval experiments, we employ the random sketch as originally described in \cite{yurtsever2019conditional}.
\end{itemize}

\paragraph{Numerical results.}
Due to memory constraints associated with storing $G\in\R^{m\by n}$, we test instances with size $n = 30,\, 100,\,300$.
We set the time limit to 50, 500, and 5000 seconds respectively.
The results are summarized in \cref{table:phret_1e2,table:phret_3e1,table:phret_3e2}.
The average memory usage of the algorithms is plotted in \cref{fig:phret_memory_usage}.
We compare the convergence behavior of CertSDP with that of CSSDP and SketchyCGAL on a single instance of each size in \cref{fig:phret_convergence_behavior}.

\begin{table}[htbp]
    \centering
    \begin{tabular}{lllllll}
\toprule 
Algorithm & time (s) & std. & $\norm{x - x^*}_2^2$ & std. & memory (MB) & std. \\
\midrule 
CertSDP & \num{3.8e+01} & \num{1.4e+01} & \num{3.0e-20} & \num{4.5e-20} & \num{0.0e+00} & \num{0.0e+00} \\
CSSDP & \num{5.0e+01} & \num{2.9e-02} & \num{7.3e-09} & \num{6.2e-09} & \num{0.0e+00} & \num{0.0e+00} \\
SketchyCGAL & \num{5.0e+01} & \num{7.8e-02} & \num{1.2e-08} & \num{9.2e-09} & \num{0.0e+00} & \num{0.0e+00} \\
ProxSDP & \num{2.5e+00} & \num{4.3e-01} & \num{6.2e-18} & \num{1.2e-17} & \num{6.2e-01} & \num{1.6e+00} \\
SCS & \num{5.1e+01} & \num{5.2e-02} & \num{9.4e-04} & \num{3.0e-03} & \num{0.0e+00} & \num{0.0e+00} \\
BM & \num{4.3e-01} & \num{1.2e-02} & \num{1.1e-17} & \num{1.5e-17} & \num{9.4e-02} & \num{2.5e-01}\\
\bottomrule 
\end{tabular}
     \caption{Experimental results for phase retrieval instances with $n = 30$ (10 instances) with time limit $50$ seconds.}
    \label{table:phret_3e1}
\end{table}

\begin{table}[htbp]
    \centering
    \begin{tabular}{lllllll}
\toprule 
Algorithm & time (s) & std. & $\norm{x - x^*}_2^2$ & std. & memory (MB) & std. \\
\midrule 
CertSDP & \num{2.6e+02} & \num{1.4e+02} & \num{1.2e-12} & \num{3.9e-12} & \num{0.0e+00} & \num{0.0e+00} \\
CSSDP & \num{5.0e+02} & \num{1.2e-02} & \num{1.1e-09} & \num{1.1e-09} & \num{0.0e+00} & \num{0.0e+00} \\
SketchyCGAL & \num{5.0e+02} & \num{2.1e-02} & \num{7.1e-08} & \num{6.8e-08} & \num{0.0e+00} & \num{0.0e+00} \\
ProxSDP & \num{4.8e+02} & \num{7.1e+01} & \num{2.9e-14} & \num{9.3e-14} & \num{8.3e+02} & \num{5.7e+01} \\
SCS & \num{5.1e+02} & \num{4.6e-01} & \num{8.0e-10} & \num{2.1e-10} & \num{3.7e+02} & \num{4.8e+01} \\
BM & \num{5.2e-01} & \num{6.7e-02} & \num{8.0e-17} & \num{1.6e-16} & \num{0.0e+00} & \num{0.0e+00}\\
\bottomrule 
\end{tabular}
     \caption{Experimental results for phase retrieval instances with $n = 100$ (10 instances) with time limit $500$ seconds.}
    \label{table:phret_1e2}
\end{table}

\begin{table}[htbp]
    \centering
    \begin{tabular}{lllllll}
\toprule 
Algorithm & time (s) & std. & $\norm{x - x^*}_2^2$ & std. & memory (MB) & std. \\
\midrule 
CertSDP & \num{5.0e+03} & \num{7.4e+01} & \num{2.8e-10} & \num{2.5e-10} & \num{0.0e+00} & \num{0.0e+00} \\
CSSDP & \num{5.0e+03} & \num{5.8e-02} & \num{1.3e-09} & \num{8.4e-10} & \num{0.0e+00} & \num{0.0e+00} \\
SketchyCGAL & \num{5.0e+03} & \num{2.5e-02} & \num{1.6e-06} & \num{2.1e-06} & \num{0.0e+00} & \num{0.0e+00} \\
ProxSDP & \num{5.1e+03} & \num{2.4e+00} & \num{3.4e-04} & \num{2.5e-04} & \num{6.9e+03} & \num{1.3e+03} \\
SCS & \num{5.5e+03} & \num{6.9e+00} & \num{4.3e-11} & \num{6.9e-11} & \num{6.3e+03} & \num{1.2e+03} \\
BM & \num{2.1e+00} & \num{2.5e+00} & \num{1.4e-14} & \num{2.4e-14} & \num{0.0e+00} & \num{0.0e+00}\\
\bottomrule 
\end{tabular}
     \caption{Experimental results for phase retrieval instances with $n = 300$ (10 instances) with time limit $5000$ seconds.}
    \label{table:phret_3e2}
\end{table}

\begin{figure}
	\centering
	\includegraphics[scale=\figurescale]{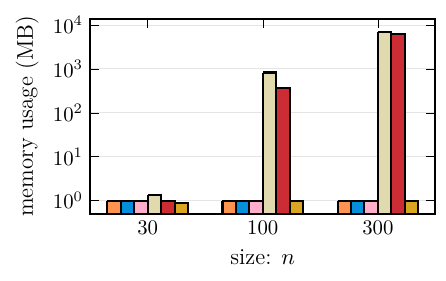}

	\includegraphics[scale=\figurescale]{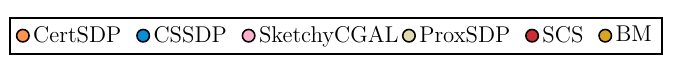}

	\caption{Memory usage of different algorithms on our phase retrieval instances as a function of the size $n$. In this chart, we plot $0.0$ MB at $1.0$ MB (see \cref{rem:virtual_size_memory} for a discussion on measuring memory usage).}
	\label{fig:phret_memory_usage}
\end{figure}

\begin{figure}
	\centering
	\includegraphics[scale=\figurescale]{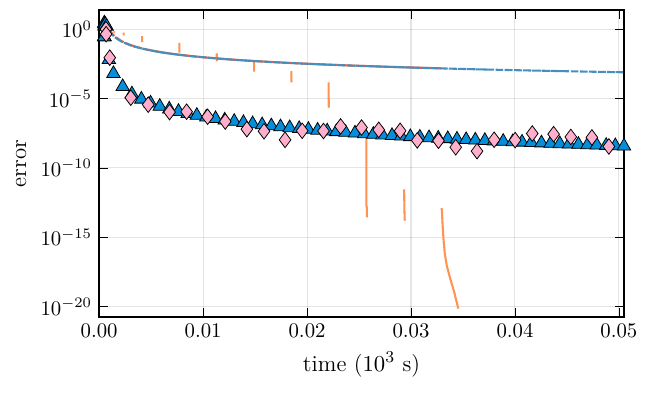}

	\vspace{-0.5em}

	\includegraphics[scale=\figurescale]{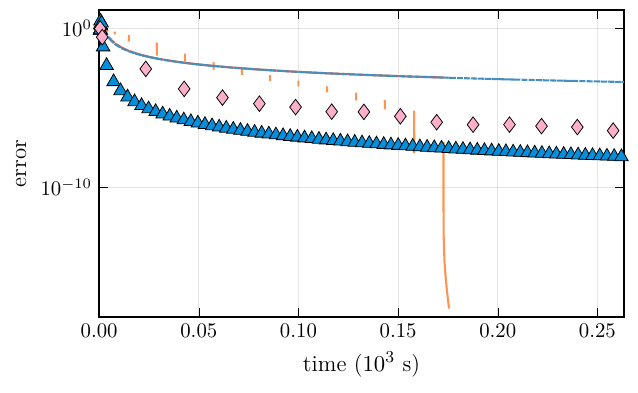}

	\vspace{-0.5em}

	\includegraphics[scale=\figurescale]{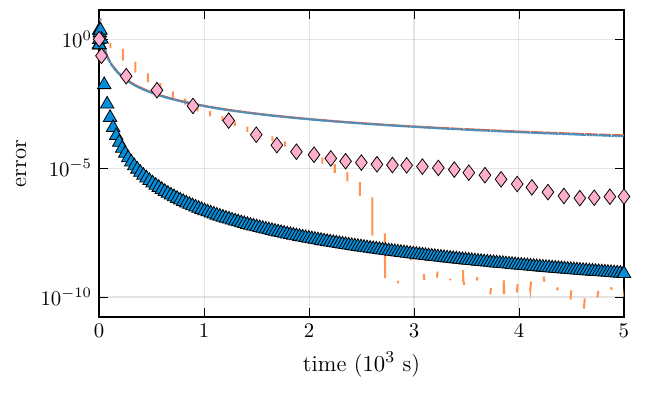}

	\vspace{-0.5em}

	\includegraphics[scale=\figurescale]{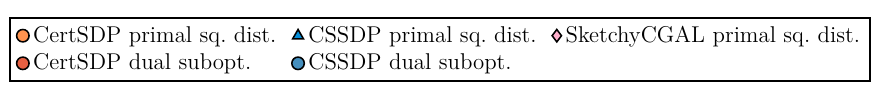}

	\caption{Comparison of convergence behavior between CertSDP (\cref{alg:exact_sdp_qmmp}), CSSDP, and SketchyCGAL on our phase retrieval instances.
	The first, second, and third rows show experiments with $n=30$, $100$, and $300$ respectively.}
	\label{fig:phret_convergence_behavior}
\end{figure}

The results for these experiments are qualitatively similar to those of \cref{sec:numerical}. We make a few additional observations:

\begin{itemize}
    \item On these phase retrieval instances, the dual suboptimality decreases to $\approx 10^{-3}$ before CertSDP seems to find a certificate of strict complementarity (see \cref{fig:phret_convergence_behavior}). This suggests that the value of $\mu^*$ in these instances is relatively small.
    \item CSSDP outperforms SketchyCGAL and also outperforms CertSDP initially. The ``crossover'' point where CertSDP outperforms CSSDP occurs only after CSSDP is able to produce a primal iterate with squared error $\approx 10^{-7}$.
    \item CertSDP seems to suffer from numerical issues for $n = 300$ and is unable to decrease the primal squared error beyond $10^{-10}$. Nonetheless, CertSDP outperforms CSSDP and SketchyCGAL on all instances tested.
\end{itemize} \end{appendix}

\end{document}